\title{Smooth Deformations and the Gauss-Manin Connection}
\author{Allan Yashinski}
\thanks{This research was partially supported under NSF grant DMS-1101382.}
\begin{document}

\begin{abstract} Given a smooth one parameter deformation of associative topological algebras, we define Getzler's Gauss-Manin connection on both the periodic cyclic homology and cohomology of the corresponding smooth field of algebras and investigate some basic properties.  We use the Gauss-Manin connection to prove a rigidity result for periodic cyclic cohomology of Banach algebras with finite weak bidimension.
\end{abstract}

\maketitle

\tableofcontents

\section{Introduction}

In this paper, we study the invariance properties of periodic cyclic homology under deformations of the algebra structure.  Given a family of algebras $\{A_t\}_{t \in J}$ parametrized by a real number $t$, we would like to identify conditions under which we can conclude
\[ HP_\bullet(A_t) \cong HP_\bullet(A_s) \qquad \forall t,s \in J. \]
The types of algebras we consider will be topological algebras, and the deformations will have a smooth dependence on $t$.

In the world of formal deformations, Getzler constructed a connection on the periodic cyclic complex of a deformation \cite{MR1261901}.  His connection, called the \emph{Gauss-Manin connection}, commutes with the boundary map on the periodic cyclic complex and descends to a flat connection on the periodic cyclic homology of the deformation.  Our goal is to adapt Getzler's connection to our setting of smooth deformations and investigate its properties.

For a real interval $J \subseteq \R$, we consider a smoothly varying family $\{m_t\}_{t \in J}$ of jointly continuous associative multiplications on a locally convex vector space $X$.  For each $t \in J$, we have a locally convex algebra $A_t$ whose underlying space is $X$ and whose multiplication is given by $m_t$.  These algebras can be collected to form the algebra $A_J$ of smooth sections of the bundle of algebras over $J$ whose fiber at $t \in J$ is $A_t$, where the multiplication in $A_J$ is defined fiberwise.  Then $A_J$ is an algebra over $C^\infty(J)$, the space of smooth complex-valued functions defined on the parameter space $J$, where the module action is given by fiberwise scalar multiplication.



The complex of interest to us is the periodic cyclic complex of $A_J$ over the ground ring $C^\infty(J)$.  This can be thought of as the space of smooth sections of the bundle of chain complexes over $J$ whose fiber at $t \in J$ is the periodic cyclic complex of $A_t$.  It is on this complex that we shall define Getzler's Gauss-Manin connection $\nabla_{GM}$.  The connection $\nabla_{GM}$ commutes with the boundary map and thus descends to a connection on the $C^\infty(J)$-linear periodic cyclic homology $HP_\bullet(A_J)$.

The fundamental issue for us is to determine when we can parallel tranport with respect to $\nabla_{GM}$ at the level of periodic cyclic homology.  Indeed, doing so would provide isomorphisms $HP_\bullet(A_t) \cong HP_\bullet(A_s)$ between the periodic cyclic homology groups of any two algebras in the deformation.  This can be used as a computational device if one already knows the cyclic homology of one particular algebra $A_{t_0}$ in the deformation.  Of course, the striking degree of generality for which $\nabla_{GM}$ exists is an indication that any attempt to integrate $\nabla_{GM}$ will fail generally.  The goal then is to identify properties of a deformation that allow for parallel translation.

Our main result is a rigidity result for periodic cyclic cohomology of a certain class of Banach algebras.  The weak bidimension $\dbw A$ of a Banach algebra $A$ is the smallest integer $n$ such that the Hochschild cohomology $H^{n+1}(A, M^*)$ vanishes for all Banach $A$-bimodules $M$.  A Banach algebra $A$ is called amenable if $\dbw A = 0$.  This class was defined and studied by Johnson \cite{MR0374934}.  If $\dbw A = n$, then $A$ is also called $(n+1)$-amenable.  We prove that the Gauss-Manin connection is integrable for small enough deformations of a Banach algebra of finite weak bidimension.  Consequently, periodic cyclic cohomology is preserved under such deformations.


A general feature of $\nabla_{GM}$ is the fact that
\[ \nabla_{GM}[\ch P] = 0, \]
where $[\ch P]$ denotes the class in $HP_0(A_J)$ of the Chern character of an idempotent $P$ in the algebra $M_N(A_J)$ of $N \times N$ matrices over $A_J$.
One can also define a dual Gauss-Manin connection $\nabla^{GM}$ on the periodic cyclic cohomology $HP^\bullet(A_J)$ over $C^\infty(J)$.  The connections are compatible in the sense that for $[\phi] \in HP^\bullet(A_J)$ and $[\omega] \in HP_\bullet(A_J)$,
\[ \frac{d}{dt} \langle \phi, \omega \rangle = \langle \nabla^{GM}\phi, \omega \rangle + \langle \phi, \nabla_{GM} \omega \rangle, \] where
\[ \langle \cdot, \cdot \rangle: HP^\bullet(A_J) \times HP_\bullet(A_J) \to C^\infty(J) \]
is the canonical pairing.  Combined with the above result, this says
\[ \frac{d}{dt} \langle \phi, \ch P \rangle = \langle \nabla^{GM}\phi, \ch P \rangle \]
for an idempotent $P \in M_N(A_J)$.  This offers insight into how the pairing between $K$-theory and cyclic cohomology deforms as the algebra deforms.

In a subsequent paper \cite{Yashinski}, we prove the integrability of the Gauss-Manin connection for the deformation of smooth noncommutative tori.  We also use the compatibility of the Gauss-Manin connection with the Chern character to prove differentiation formulas for the pairings of cyclic cocycles with $K$-theory classes.  Similar work was carried out independently by Yamashita \cite{Yamashita}.

The outline of the paper is as follows.  In \textsection 2, we cover necessary background on locally convex topological vector spaces, Hochschild and cyclic homology, and Getzler's Cartan homotopy formula for the action of Hochschild cochains on the periodic cyclic complex.  In \textsection 3, we lay the foundation for our study of deformations by studying properties of modules of the form $C^\infty(J,X)$ for some locally convex vector space $X$.  Our main techniques for studying deformations use connections and parallel translation, which are discussed in \textsection 4.  In \textsection 5, we define what we mean by a smooth deformation of either algebras or chain complexes, and give a criterion for triviality of these deformations in terms of integrable connections.  We use our methods to prove some known rigidity results in \textsection 6.  In \textsection 7, we define Getzler's Gauss-Manin connection in our setting of smooth deformations, and prove some of its basic properties.  The main theorem on Banach algebras of finite weak bidimension is proved in \textsection 8.

\subsection*{Acknowledgements}
I'd like to thank my thesis advisor, Nigel Higson, for suggesting this line of research and offering useful guidance throughout the project.  I have benefitted from discussions with Erik Guentner and Rufus Willett on the material.  I'd also like to thank Rufus Willett for reading an earlier draft of this document and suggesting ways to improve it.

\section{Preliminaries}

In this section, we shall first quickly review the relevant concepts from the theory of locally convex topological vector spaces.  Then we'll discuss Hochschild and cyclic homology.  In particular, we'll review the action of Hochschild cochains on the cyclic chain complex and the corresponding Cartan homotopy formula.

\subsection{Locally convex algebras and modules}

We shall work in the category $\LCTVS$ of complete, Hausdorff locally convex topological vector spaces over $\C$ and continuous linear maps.  We shall write $X \in \LCTVS$ to mean that $X$ is a complete, Hausdorff locally convex topological vector space. See \cite{MR0225131} for background in the theory of locally convex topological vector spaces.  More details concerning topological tensor products can be found in \cites{MR0075539, MR0225131, MR1093462}.  We shall rapidly review the necessary facts below.

Recall that $X \in \LCTVS$ is a Fr\'{e}chet space if the topology on $X$ is metrizable.  This is equivalent to the topology on $X$ being defined by a countable family of seminorms.  Among the Fr\'{e}chet spaces are the Banach spaces, whose topology is defined by a single norm.

Given $X \in \LCTVS$ and a subspace $Y \subset X$, there is a naturally locally convex topology on the quotient space $X/Y$, which is Hausdorff if and only if $Y$ is closed.  Even if $Y$ is closed, the quotient $X/Y$ may not be complete.  However if $X$ is a Fr\'{e}chet space and $Y$ is a closed subspace, then $X/Y$ is complete and therefore is also a Fr\'{e}chet space.

Given $X, Y \in \LCTVS$, the space $\Hom(X,Y)$ of continuous linear maps from $X$ to $Y$ is a Hausdorff locally convex topological vector space under the topology of uniform convergence on bounded subsets of $X$.  Recall that a subset $A \subset X$ is bounded if and only if $\sup_{x \in A} p(x) < \infty$ for each continuous seminorm $p$ on $X$.  If $X$ has the additional property of being bornological, then $\Hom(X,Y)$ is complete.  Examples of bornological spaces include Fr\'{e}chet spaces and $LF$-spaces, which are strict, countable inductive limits of Fr\'{e}chet spaces.  A special case of interest is the \emph{strong dual} $X^* = \Hom(X, \C).$  We remark that the strong dual of a Banach space is a Banach space, but the strong dual of a Fr\'{e}chet space is never a Fr\'{e}chet space, unless the original space is actually a Banach space.  One can also consider $\Hom(X,Y)$ with the weaker topology of pointwise convergence, which we denote $\Hom_\sigma(X,Y)$.

Nuclearity (in the sense of Grothendieck) is a nice technical property that a space $X \in \LCTVS$ can have.  We shall occasionally need to reference it, but we shall not work with the concept directly.  For more details, see \cite{MR0075539} or \cite{MR0225131}.  We remark that this should not be confused with the notion of nuclearity for $C^*$-algebras.  For example, a Banach space is nuclear if and only if it is finite-dimensional.

The bilinear maps appearing in structures in this paper will be assumed to be jointly continuous.  This naturally leads to the projective tensor product of two spaces in $\LCTVS$.  If one wishes to consider separately continuous bilinear maps, one should use the inductive tensor product instead, see \cite{MR0075539}.  In the cases of Fr\'{e}chet spaces, the notions of joint and separate continuity coincide, and therefore so do these two tensor products.

Given $X, Y \in \LCTVS$, the \emph{projective topology} on $X \otimes Y$ is the strongest locally convex topology such that the canonical bilinear map $\iota: X \times Y \to X \otimes Y$ is jointly continuous, see \cite[Chapter~43]{MR0225131} for more details and an explicit construction.  The \emph{(completed) projective tensor product} $X \potimes Y$ is the completion of $X \otimes Y$ with the projective topology.  The completed projective tensor product has the universal property that any jointly continuous bilinear map $B$ from $X \times Y$ into a space $Z \in \LCTVS$ induces a unique continuous linear map $\widehat{B}: X \potimes Y \to Z$ such that the diagram
\[ \xymatrix{
X \times Y \ar[r]^-\iota \ar[rd]^-B & X \potimes Y \ar[d]^-{\widehat{B}}\\
& Z
} \]
commutes.  The projective tensor product is functorial in the sense that two continuous linear maps $F: X_1 \to X_2$ and $G: Y_1 \to Y_2$ induce a continuous linear map \[ F \otimes G: X_1 \potimes Y_1 \to X_2 \potimes Y_2 \] given on elementary tensors by
\[ (F \otimes G)(x \otimes y) = F(x) \otimes G(y). \]  If $X$ and $Y$ are Banach (resp. Fr\'{e}chet) spaces, then $X \potimes Y$ is a Banach (resp. Fr\'{e}chet) space.

In the language of category theory, the tensor product $\potimes$ makes $\LCTVS$ into a symmetric monoidal category.  In any such category, one can define a notion of algebra and module.  For $\LCTVS$, these notions agree with the definitions below.

By a \emph{locally convex algebra},
we mean a space $A \in \LCTVS$ equipped with a jointly continuous associative multiplication.  Notice we are implicitly assuming $A$ is complete.  The multiplication induces a continuous linear map $m: A \potimes A \to A$.  Joint continuity implies that for every defining seminorm $p$ on $A$, there is another continuous seminorm $q$ such that
\[ p(ab) \leq q(a)q(b), \qquad \forall a,b \in A. \]  There may be no relationship between $p$ and $q$ in general.  In the special case where
\[ p(ab) \leq p(a)p(b), \qquad \forall a,b \in A, \] for all seminorms in a family defining the topology, the algebra is called \emph{multiplicatively convex} or \emph{$m$-convex}.  An $m$-convex algebra can be expressed as a projective limit of Banach algebras.  A \emph{Fr\'{e}chet algebra} is a locally convex algebra whose underlying space is a Fr\'{e}chet space.  We do not insist that a Fr\'{e}chet algebra is $m$-convex, as some authors do.

Now suppose $R$ is a unital, commutative locally convex algebra.  By a \emph{locally convex $R$-module}, we mean a space $M \in \LCTVS$ equipped with a jointly continuous $R$-module structure.  Such a module action induces a continuous linear map $\mu: R \potimes M \to M$.  All such modules will be assumed to be unital in the sense that $1 \cdot m = m$ for all $m \in M$.
Given two locally convex $R$-modules $M$ and $N$, we topologize $\Hom_R(M,N)$ as a subspace of $\Hom_\C(M,N)$.  When $N = R$, we obtain the topological $R$-linear dual of $M$
\[ M^\dual := \Hom_R(M,R). \]
We shall use the notation $M^\dual$ to distinguish from $M^*$, which will always mean the usual $\C$-linear topological dual space.

We shall need to take topological tensor products over an algebra different from $\C$.  The basic facts we need are below, but a more detailed exposition can be found in \cite[Chapter II]{MR1093462}.  Suppose $R$ is a unital commutative locally convex algebra and let $M$ and $N$ be locally convex $R$-modules.
The \emph{(completed) projective tensor product over $R$} of $M$ and $N$, denoted $M \potimes_R N$, is a locally convex $R$-module together with a jointly continuous $R$-bilinear map $\iota: M \times N \to M \potimes_R N$ which is universal in the sense that any jointly continuous $R$-bilinear map $B$ from $M \times N$ into a locally convex $R$-module $P$ induces a unique continuous $R$-linear map $\widehat{B}: M \potimes_R N \to P$ such that the diagram
\[ \xymatrix{
M \times N \ar[r]^-\iota \ar[rd]^-B & M \potimes_R N \ar[d]^-{\widehat{B}}\\
& P
} \]
commutes.  The module $M \potimes_R N$ can be constructed as the completion
of $(M \potimes_\C N)/K$, where $K$ is the closure of the subspace spanned by elements of the form
\[ (r\cdot m) \otimes n - m \otimes (r\cdot n), \qquad r \in R, m \in M, n \in N. \]  Any two continuous $R$-linear maps
$F: M_1 \to N_1$ and $G: M_2 \to N_2$ induce a continuous $R$-linear map
\[ F \otimes G: M_1 \potimes_R N_1 \to M_2 \potimes_R N_2 \] in the usual way.  From the construction, we see that if $M$ and $N$ are Banach (resp. Fr\'{e}chet) modules, then $M \potimes_R N$ is a Banach (resp. Fr\'{e}chet) module.


A \emph{locally convex $R$-algebra} is a locally convex $R$-module $A$ equipped with a jointly continuous associative $R$-linear product $m$, so that $m$ induces a continuous $R$-linear map
\[ m: A \potimes_R A \to A. \]


A locally convex $R$-module is \emph{free} if it is isomorphic to $R \potimes_\C X$ for some $X \in \LCTVS$.  Here the $R$-module action is induced by
\[ r\cdot(s \otimes x) = rs \otimes x. \]
The free module $R \potimes X$ has the universal property that any continuous $\C$-linear map $\widetilde{F}$ from $X$ into a locally convex $R$-module $M$ induces a unique continuous $R$-linear map $F: R \potimes X \to M$ such that the diagram
\[ \xymatrix{
X \ar[r]^-\iota \ar[rd]^-{\widetilde{F}} & R \potimes X \ar[d]^-F\\
& M
} \]
commutes.  Here, $\iota(x) = 1 \otimes x$ and $F$ is given by
\[ F(r \otimes x) = r\cdot \widetilde{F}(x). \]
This establishes a linear isomorphism
\[ \Hom_R(R \potimes X, M) \cong \Hom(X, M). \]

\begin{proposition} \label{Proposition-FreeModuleUMPTopologicalIsomorphism}
If $X$ and $R$ are Fr\'{e}chet spaces, one of which is nuclear, then the linear isomorphism \[ \Hom_R(R \potimes X, M) \cong \Hom(X, M) \] is a topological isomorphism.
\end{proposition}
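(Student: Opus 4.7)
The plan is to verify that the algebraic bijection $F \leftrightarrow \widetilde{F}$, with $\widetilde{F}(x) = F(1 \otimes x)$ and $F(r \otimes x) = r \cdot \widetilde{F}(x)$, is continuous in both directions; since both $\Hom_R(R \potimes X, M)$ and $\Hom(X, M)$ carry the topology of uniform convergence on bounded subsets, it suffices to dominate each defining seminorm on one side by a defining seminorm on the other.

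The direction $F \mapsto \widetilde{F}$ is straightforward: the canonical map $\iota : X \to R \potimes X$, $x \mapsto 1 \otimes x$, is continuous and hence sends bounded sets to bounded sets, so for every bounded $B_X \subset X$ and every continuous seminorm $q$ on $M$,
\[ \sup_{x \in B_X} q(\widetilde{F}(x)) = \sup_{u \in \iota(B_X)} q(F(u)), \]
and the right-hand side is a defining seminorm on $\Hom_R(R \potimes X, M)$. This step uses neither the Fréchet nor the nuclearity hypothesis.

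For the reverse direction $\widetilde{F} \mapsto F$ I would factor the assignment as
\[ \widetilde{F} \mapsto \mathrm{id}_R \otimes \widetilde{F} \mapsto \mu_M \circ (\mathrm{id}_R \otimes \widetilde{F}) = F, \]
where $\mu_M : R \potimes M \to M$ is the module action. Post-composition with the fixed continuous map $\mu_M$ is plainly continuous on the $\Hom$-spaces, so the task reduces to showing continuity of $\widetilde{F} \mapsto \mathrm{id}_R \otimes \widetilde{F}$ from $\Hom(X, M)$ into $\Hom(R \potimes X, R \potimes M)$. A defining seminorm on the target has the form $G \mapsto \sup_{u \in B} \sigma(G(u))$, where $B$ is bounded in $R \potimes X$ and $\sigma$ is the projective cross seminorm built from continuous seminorms $p$ on $R$ and $q$ on $M$. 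Grothendieck's structure theorem for bounded subsets of projective tensor products of Fréchet spaces, which is where the nuclearity hypothesis enters, allows one to enclose $B$ inside the closed absolutely convex hull of $B_R \otimes B_X$ for some bounded $B_R \subset R$ and $B_X \subset X$. For $u = \sum_i \lambda_i (r_i \otimes x_i)$ in that hull with $r_i \in B_R$, $x_i \in B_X$, and $\sum_i |\lambda_i| \leq 1$, subadditivity and the cross-seminorm bound $\sigma(r \otimes m) \leq p(r)q(m)$ yield
\[ \sigma\bigl((\mathrm{id}_R \otimes \widetilde{F})(u)\bigr) \leq \sum_i |\lambda_i|\, p(r_i)\, q(\widetilde{F}(x_i)) \leq \Bigl(\sup_{r \in B_R} p(r)\Bigr) \Bigl(\sup_{x \in B_X} q(\widetilde{F}(x))\Bigr), \]
and passing to the closure and taking the supremum over $u \in B$ bounds the seminorm on the target by a scalar multiple of a defining seminorm on $\Hom(X, M)$.

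The main obstacle is precisely this invocation of Grothendieck's theorem: without the Fréchet plus nuclearity package one loses a clean description of bounded subsets of $R \potimes X$ in terms of bounded subsets of the factors, and the crucial uniformity over $u \in B$ in the estimate above breaks down.
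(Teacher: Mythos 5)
Your proof is correct and follows essentially the same route as the paper: the easy direction uses that $x \mapsto 1 \otimes x$ carries bounded sets to bounded sets, and the harder direction factors through $\widetilde{F} \mapsto \mathrm{id}_R \otimes \widetilde{F}$ followed by composition with the module action $\mu$, with the key step being Grothendieck's description of bounded subsets of $R \potimes X$ under the nuclearity hypothesis. The only difference is that you spell out the seminorm estimate for $\mathrm{id}_R \otimes \widetilde{F}$ that the paper leaves implicit.
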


\begin{proof}
The linear isomorphisms
\[ \Phi: \Hom(X,M) \to \Hom_R(R \potimes X, M), \qquad \Psi: \Hom_R(R \potimes X, M) \to \Hom(X,M) \] are given by
\[ \Phi(F) = \mu (1 \otimes F), \qquad \Psi(G)(x) = G(1 \otimes x), \]  where $\mu: R \potimes M \to M$ is the module action.  Continuity of $\Psi$ follows from the fact that if $B \subset X$ is bounded, then $1 \otimes B \subset R \potimes X$ is bounded.

The continuity of $\Phi$ is more subtle.  The map $\Phi$ factors as
\[ \xymatrix{
\Phi: \Hom(X,M) \ar[r]^-{\Phi_1} &\Hom_R(R \potimes X, R \potimes M) \ar[r]^-{\Phi_2} &\Hom_R(R \potimes X, M),
} \]
where $\Phi_1(F) = 1 \otimes F$ and $\Phi_2$ is composition with the module action $\mu$.  Continuity of $\Phi_2$ follows from continuity of $\mu$.  To show that $\Phi_1$ is continuous, we need to relate the bounded subsets of $R \potimes X$ to the bounded subsets of $R$ and $X$.  This is related to the difficult ``probl\`{e}me des topologies" of Grothendieck \cite{MR0075539}.  If either $R$ or $X$ are nuclear, then for every bounded subset $D \subset R \potimes X$, there are bounded subsets $A \subset R$, $B \subset X$ such that $D$ is contained in the closed convex hull of
\[ A \otimes B = \{ r \otimes x ~|~ r \in A, x \in B \}, \]  see \cite[Theorem 21.5.8]{MR632257}.  Continuity of $\Phi_1$ follows from this fact.
\end{proof}

\begin{proposition} \label{Proposition-TensorProductOfFreeModules}
Given $X, Y \in \LCTVS$,
\[ (R \potimes_\C X) \potimes_R (R \potimes_\C Y) \cong R \potimes_\C (X \potimes_\C Y) \]
as locally convex $R$-modules via the correspondence
\[ (r_1 \otimes x) \otimes (r_2 \otimes y) \longleftrightarrow r_1r_2 \otimes (x \otimes y). \]
\end{proposition}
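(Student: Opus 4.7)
The plan is to construct continuous $R$-linear maps in both directions using the universal properties of the completed projective tensor products over $\C$ and over $R$, and then check they are mutually inverse on elementary tensors.

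First I would build the map $\Phi: R \potimes_\C (X \potimes_\C Y) \to (R \potimes_\C X) \potimes_R (R \potimes_\C Y)$ from right-hand-side data. The assignment $(r,x,y) \mapsto (r \otimes x) \otimes (1 \otimes y)$ gives a jointly continuous trilinear map $R \times X \times Y$ into the target, and by the universal property of the projective tensor product (applied iteratively, together with the standard associativity isomorphism $R \potimes_\C X \potimes_\C Y \cong R \potimes_\C (X \potimes_\C Y)$) this extends uniquely to a continuous linear map $\Phi$ with the desired formula on elementary tensors. That $\Phi$ is $R$-linear can be verified on elementary tensors and then extended by density and continuity.

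Next I would build the inverse $\Psi: (R \potimes_\C X) \potimes_R (R \potimes_\C Y) \to R \potimes_\C (X \potimes_\C Y)$. The quadrilinear map $R \times X \times R \times Y \to R \potimes_\C (X \potimes_\C Y)$ given by $(r_1,x,r_2,y) \mapsto (r_1 r_2) \otimes (x \otimes y)$ is jointly continuous, using joint continuity of multiplication in $R$ and of the canonical bilinear maps into the projective tensor products. By iterated universal properties this extends to a continuous linear map on $R \potimes_\C X \potimes_\C R \potimes_\C Y$, which under the associativity isomorphism corresponds to a jointly continuous bilinear map $(R \potimes_\C X) \times (R \potimes_\C Y) \to R \potimes_\C (X \potimes_\C Y)$. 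The key verification is that this bilinear map is $R$-balanced: using commutativity of $R$, both $(r \cdot (r_1 \otimes x), r_2 \otimes y)$ and $(r_1 \otimes x, r \cdot (r_2 \otimes y))$ are sent to $r r_1 r_2 \otimes (x \otimes y)$. By the universal property of $\potimes_R$ the map then descends to the desired continuous $R$-linear map $\Psi$.

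Finally, the compositions $\Phi \circ \Psi$ and $\Psi \circ \Phi$ agree with the identity on elementary tensors by direct inspection, and since elementary tensors span dense subspaces and all maps in sight are continuous, the compositions are the identity on the whole spaces. The only genuinely nonroutine step is the $R$-balancing verification needed to descend the bilinear map in the construction of $\Psi$; once that is in hand, the rest is standard universal-property bookkeeping together with the associativity of the projective tensor product over $\C$.
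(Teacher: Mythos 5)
Your proof is correct and follows exactly the route the paper indicates: the paper's entire proof is the one-line remark that the result follows from the universal properties of both tensor products, and you have simply carried out that bookkeeping in full, constructing the two maps, checking $R$-bilinearity (balancing) to descend through $\potimes_R$, and verifying mutual inverseness on elementary tensors.
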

This can be proved using the universal properties of both modules.  This shows that the projective tensor product of free modules is free.

\begin{proposition} \label{Proposition-QuotientOfFreeModules}
Let $R$ be a nuclear Fr\'{e}chet algebra.  Given a Fr\'{e}chet space $X$ and a closed subspace $Y \subset X$,
\[ (R \potimes X) / (R \potimes Y) \cong R \potimes (X / Y) \]
as Fr\'{e}chet $R$-modules via the correspondence
\[ [r \otimes x] \longleftrightarrow r \otimes [x]. \]
\end{proposition}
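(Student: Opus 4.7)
The plan is to build the isomorphism and its inverse explicitly using the universal property of the projective tensor product, so that continuity of each direction is automatic and no appeal to the open mapping theorem is required for the core identification. Let $\pi: X \to X/Y$ denote the quotient map. The functorial map $\id_R \otimes \pi: R \potimes X \to R \potimes (X/Y)$ is continuous $R$-linear, and it sends every elementary tensor $r \otimes y$ with $y \in Y$ to zero; by continuity it vanishes on $R \potimes Y$. Hence it descends to a continuous $R$-linear map
\[ \bar\Phi: (R \potimes X) / (R \potimes Y) \to R \potimes (X/Y), \qquad [r \otimes x] \mapsto r \otimes [x]. \]

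To produce an inverse, I would consider the bilinear assignment $(r, [x]) \mapsto [r \otimes x]$ from $R \times (X/Y)$ into $(R \potimes X) / (R \potimes Y)$. It is well-defined because any two lifts of $[x]$ differ by an element of $Y$, so $r \otimes (x - x')$ lies in $R \potimes Y$. Joint continuity follows from the fact that $\id_R \times \pi: R \times X \to R \times (X/Y)$ is an open continuous surjection, combined with joint continuity of the composite $R \times X \to R \potimes X \to (R \potimes X) / (R \potimes Y)$. The universal property of $R \potimes (X/Y)$ then yields a continuous $R$-linear map
\[ \Psi: R \potimes (X/Y) \to (R \potimes X) / (R \potimes Y), \qquad r \otimes [x] \mapsto [r \otimes x]. \]
Checking $\bar\Phi \circ \Psi$ and $\Psi \circ \bar\Phi$ on elementary tensors (resp.\ their equivalence classes) shows both composites are the identity on a dense subspace, hence everywhere by continuity.

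The serious issue is hidden in the very formation of the quotient $(R \potimes X) / (R \potimes Y)$: one needs the natural map $R \potimes Y \to R \potimes X$ to be an injective topological embedding with closed image, otherwise the quotient is neither Hausdorff nor a Fr\'echet module. This is the point where the nuclearity of $R$ is essential. For a nuclear Fr\'echet space $R$ and a closed inclusion $Y \hookrightarrow X$ of Fr\'echet spaces, the completed projective tensor product functor $R \potimes (-)$ is exact on Fr\'echet spaces, a result due to Grothendieck; applied to $0 \to Y \to X \to X/Y \to 0$ it furnishes the short exact sequence $0 \to R \potimes Y \to R \potimes X \to R \potimes (X/Y) \to 0$ of Fr\'echet $R$-modules, with the image of $R \potimes Y$ closed in $R \potimes X$ by the open mapping theorem. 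This is the only nontrivial obstacle in the argument; the rest of the proof is a formal manipulation of universal properties, and without nuclearity the statement does not even make sense as written.
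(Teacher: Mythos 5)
Your proof is correct and follows essentially the same route as the paper's: invoke nuclearity of $R$ to get that $R \potimes Y$ sits as a closed subspace of $R \potimes X$ (the paper cites \cite[Proposition~43.7]{MR0225131}, which is the same content as the exactness fact you invoke), note that Fr\'echet quotients by closed subspaces are Fr\'echet, and then build the mutually inverse $R$-linear maps via the universal properties. You simply spell out the last step in more detail than the paper does, including the openness-of-the-quotient argument for joint continuity of the inverse bilinear map, which is a fine way to see it. One cosmetic quibble: the quotient $(R \potimes X)/(R \potimes Y)$ always "makes sense" as a topological vector space; what fails without nuclearity is that it need not be Hausdorff or a Fr\'echet $R$-module — you clearly understand this, so it's only a matter of phrasing.
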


\begin{proof}
Nuclearity of $R$ implies that $R \potimes Y$ is a closed subspace of $R \potimes X$ \cite[Proposition 43.7]{MR0225131}.  Since all spaces are Fr\'{e}chet, all quotients appearing are complete.  One then induces mutually inverse isomorphisms using the universal properties of completed projective tensor products and quotients.
\end{proof}


By a \emph{locally convex cochain complex}, we mean a collection of spaces $\{ \mathcal{C}^n \}_{n \in \Z}$ in $\LCTVS$ and continuous linear maps $\{ d^n: \mathcal{C}^n \to \mathcal{C}^{n+1}\}_{n \in \Z}$ such that $d^{n+1} \circ d^n = 0$.  We'll use the notation $Z^n(\mathcal{C}) = \ker d^n$ for cocycles and $B^n(\mathcal{C}) = \im d^{n-1}$ for coboundaries.  The cohomology is $H^n(\mathcal{C}) = Z^n(\mathcal{C}) / B^n(\mathcal{C})$, which may not be Hausdorff or complete.  We will often drop the superscript $n$ on the coboundary map.  By turning the arrows around, we obtain the definition of a locally convex chain complex.

If each $\mathcal{C}^n$ is a locally convex $R$-module and the coboundary maps are $R$-linear, then $\mathcal{C}^\bullet$ is a \emph{locally convex cochain complex of $R$-modules}.  In this case, the cohomology  spaces are $R$-modules.

\subsection{Hochschild and cyclic homology for locally convex algebras}

A good reference for Hochschild and cyclic homology is \cite{MR1600246}.

Let $R$ be a unital commutative locally convex algebra and let $A$ be a (possibly nonunital) locally convex $R$-algebra.  The main examples for us will be $R = \C$ and $R=C^\infty(J)$, the smooth functions on a real interval $J$.  All homology theories that follow are the continuous versions of the usual $R$-linear algebraic theories, in that they take into account the topology of the algebra $A$.

Recall that the \emph{unitization} of the algebra $A$ is the algebra
\[ A_+ = A \oplus R \] with multiplication
\[ (a_1, r_1)(a_2, r_2) = (a_1a_2 + r_2\cdot a_1 + r_1\cdot a_2, r_1r_2). \]  Then $A_+$ is a unital locally convex $R$-algebra with unit $(0,1)$, which contains $A$ as a closed ideal.  We can, and will, form the unitization in the case where $A$ is already unital.  We shall let $e \in A_+$ denote the unit of $A_+$, to avoid possible confusion with the original unit of $A$, if it exists.

\subsubsection{Hochschild cochains}
Let $C^k(A,A)$ denote the space of all jointly continuous $k$-multilinear (over $R$) maps $D: A^{\times k} \to A$.
The coboundary map $\delta: C^k(A,A) \to C^{k+1}(A,A)$ is given by
\begin{align*}
\delta D(a_1, \ldots, a_{k+1}) &= D(a_1, \ldots , a_k)a_{k+1} + (-1)^{k+1} a_1D(a_2, \ldots, a_{k+1})\\ & \qquad + \sum_{j=1}^k (-1)^{k-j+1}D(a_1, \ldots , a_{j-1}, a_ja_{j+1}, a_{j+2}, \ldots , a_k),
\end{align*} and satisfies $\delta^2 = 0$.
The cohomology of $(C^{\bullet}(A,A), \delta)$ is the \emph{Hochschild cohomology of $A$ (with coefficients in $A$)}, and is denoted by $H^\bullet(A,A)$.  If we wish to emphasize the ground ring $R$, we shall write $H^\bullet_R(A,A).$

There is much additional structure on $C^\bullet(A,A)$, including a cup product and a Lie bracket, called the \emph{Gerstenhaber bracket}.  The shifted complex $\g^\bullet(A) := C^{\bullet+1}(A,A)$ is a differential graded Lie algebra under the Gerstenhaber bracket \cite{MR0161898}.  This gives the cohomology $H^{\bullet+1}(A,A)$ the structure of a graded Lie algebra.

\subsubsection{Hochschild homology}
For $n \geq 0$, the space of \emph{Hochschild $n$-chains} is defined to be
\[ 
C_n(A) = \begin{cases} A, &n=0\\ A_+ \potimes_R A^{\potimes_R n}, &n \geq 1
\end{cases} \]
The boundary map $b: C_n(A) \to C_{n-1}(A)$ is given on elementary tensors by
\begin{align*}
b(a_0 \otimes \ldots \otimes a_n) &= \sum_{j=0}^{n-1} (-1)^j a_0 \otimes \ldots \otimes a_{j-1} \otimes a_ja_{j+1} \otimes a_{j+2} \otimes \ldots \otimes a_n\\
& \qquad + (-1)^n a_na_0 \otimes a_1 \otimes \ldots \otimes a_{n-1}.\\
\end{align*}
More rigorously, $b$ is induced by the functoriality of the projective tensor product $\potimes_R$ using the continuous multiplication map $m: A \potimes_R A \to A$.  This shows that $b$ is continuous.  Associativity of $m$ implies that $b^2 = 0$.  The homology of the complex $(C_\bullet(A), b)$ is called the \emph{Hochschild homology of $A$ (with coefficients in $A_+$)} and shall be denoted $HH_\bullet(A)$ or $HH_\bullet^R(A)$ if we wish to emphasize $R$.

\subsubsection{Cyclic homology}

We only introduce the periodic cyclic theory.  Let \[ C_{\even}(A) = \prod_{n=0}^\infty C_{2n}(A), \qquad C_{\odd}(A) = \prod_{n=0}^\infty C_{2n+1}(A), \] with the product topologies.  Consider the operator $B: C_n(A) \to C_{n+1}(A)$ given on elementary tensors by \[ B(a_0 \otimes \ldots \otimes a_n) = \sum_{j=0}^n (-1)^{jn} e \otimes a_j \otimes \ldots a_n \otimes a_0 \otimes \ldots \otimes a_{j-1} \]  if $a_0 \in A$, and
\[ B(e \otimes a_1 \otimes \ldots \otimes a_n) = 0. \]
Then it is immediate that $B^2 = 0$.  Moreover, one can check that \[ bB + Bb = 0.\]  Extend the operators $b$ and $B$ to the periodic cyclic complex \[ C_{\per}(A) = C_{\even}(A) \oplus C_{\odd}(A). \]  This is a $\Z/2$-graded complex
\[ \xymatrix{
C_{\even}(A) \ar@<.5ex>[r]^-{b+B} &C_{\odd}(A) \ar@<.5ex>[l]^-{b+B}
} \]
with differential $b+B$.  The homology groups of this complex are called the even and odd \emph{periodic cyclic homology groups} of $A$, and are denoted $HP_0(A)$ and $HP_1(A)$ respectively.  As before, we will write $HP_\bullet^R(A)$ if we wish to emphasize the ground ring $R$.

\subsubsection{Dual cohomology theories}

To obtain periodic cyclic cohomology, we dualize the previous notions.  Let \[ C^n(A) = C_n(A)^\dual = \Hom_R(C_n(A),R)\] be the topological $R$-linear dual module of $C_n(A)$ with the topology of uniform convergence on bounded subsets.  The maps \[ b:C^n(A) \to C^{n+1}(A), \qquad B: C^n(A) \to C^{n-1}(A) \] are induced by duality, and are given explicitly by
\begin{align*}
b\phi(a_0, \ldots, a_n) &= \sum_{j=0}^{n-1} (-1)^j \phi(a_0, \ldots a_{j-1}, a_ja_{j+1}, a_{j+2}, \ldots, a_n)\\
& \qquad \qquad + (-1)^n\phi(a_na_0, a_1, \ldots, a_{n-1}),
\end{align*} and
\[ B\phi(a_0, \ldots , a_{n-1}) = \sum_{j=0}^{n-1} (-1)^{j(n-1)}\phi(e, a_j, \ldots, a_{n-1}, a_0, \ldots a_{j-1}), \qquad a_0 \in A, \]
\[ B\phi(e, a_1, \ldots , a_{n-1}) = 0. \]  The cohomology of $(C^\bullet(A), b)$ is called the \emph{Hochschild cohomology of $A$ (with coefficients in $A^\dual = \Hom_R(A, R)$)} and will be denoted by $HH^\bullet(A)$.  The periodic cyclic cochain complex is $C^{\per}(A) = C^{\even}(A) \oplus C^{\odd}(A)$, where
\[C^{\even}(A) = \bigoplus_{n=0}^\infty C^{2n}(A), \qquad C^{\odd}(A) = \bigoplus_{n=0}^\infty C^{2n+1}(A). \]  Then $C^{\per}(A)$ is a $\Z/2$-graded complex with differential $b+B$, and its cohomology groups are the even and odd \emph{periodic cyclic cohomology} of $A$, denoted $HP^0(A)$ and $HP^1(A)$ respectively.

Since $C^{\per}(A) \cong C_{\per}(A)^\dual$, there is a canonical pairing \[ \langle \cdot, \cdot \rangle: C^{\per}(A) \times C_{\per}(A) \to R \]  which descends to a bilinear map \[ \langle \cdot , \cdot \rangle: HP^{\bullet}(A) \times HP_{\bullet}(A) \to R.\]

\subsubsection{Chern character}

We shall review some basic facts about the Chern character in periodic cyclic homology, see \cite[Ch. 8]{MR1600246} for a more detailed account.

Let $A$ be an arbitrary algebra over the ground ring $R$.  Given an idempotent $P \in A$, $P^2 = P$, define the element $\ch P \in C_{\even}(A)$ given by $(\ch P)_0 = P$ and for $n \geq 1$, \[ (\ch P)_{2n} = (-1)^n\frac{(2n)!}{n!}(P^{\otimes (2n+1)} - \frac{1}{2} e \otimes P^{\otimes (2n)}). \]  One can verify directly that $b (\ch P_{2(n+1)}) = -B (\ch P_{2n})$, so that $(b+B)\ch P = 0$.

More generally, we can define $\ch P \in C_{\even}(A)$ when $P$ is an idempotent in the matrix algebra $M_N(A) \cong M_N(\C) \otimes A$.  Consider the \emph{generalized trace} $T: C_\bullet(M_N(A)) \to C_\bullet(A)$ defined by
\[ T((u_0 \otimes a_0) \otimes \ldots \otimes (u_n \otimes a_n)) = \tr(u_0\ldots u_n)a_0 \otimes \ldots \otimes a_n, \] where $\tr: M_N(\C) \to \C$ is the ordinary trace.  As shown in \cite[Ch. 1]{MR1600246}, $T$ is a chain homotopy equivalence, and so induces an isomorphism $HP_\bullet(M_N(A)) \cong HP_\bullet(A)$.  So define $\ch P \in C_{\even}(A)$ to be the image of $\ch P \in C_{\even}(M_N(A))$ under the map $T$.  In this way, we build a homomorphism
\[ \ch: K_0(A) \to HP_0(A), \qquad \ch [P] = [\ch P], \]
where $K_0(A)$ denotes the algebraic $K$-theory group of $A$, and $[P]$ is the $K$-theory class of an idempotent $P \in M_N(A)$.

Given an invertible $U \in A$, there is a cycle $\ch U \in C_{\odd}(A)$ given by \[ (\ch U)_{2n+1} = (-1)^n n! U^{-1} \otimes U \otimes U^{-1} \otimes \ldots \otimes U^{-1} \otimes U.\]  Then, one can check that $(b+B)\ch U = 0$.  As in the case of idempotents, define $\ch U \in C_{\odd}(A)$ for any invertible $U \in M_N(A)$ by composing with $T$.  In this way, we build a homomorphism
\[ \ch: K_1(A) \to HP_1(A), \qquad \ch [U] = [\ch U], \] where $K_1(A)$ denotes the algebraic $K$-theory group of $A$.

There are pairings
\[ HP^0(A) \times K_0(A) \to R, \qquad HP^1(A) \times K_1(A) \to R \] given by
\[ \langle [\phi], [P] \rangle = \langle [\phi], [\ch P] \rangle, \qquad \langle [\phi], [U] \rangle = \langle [\phi], [\ch U] \rangle \] for an idempotent $P \in M_N(A)$ and an invertible $U \in M_N(A)$.

\subsubsection{Noncommutative geometry dictionary}

In the case where $A = C^\infty(M)$, the algebra of smooth functions on a closed manifold $M$ with its usual Fr\'{e}chet topology, the above homology groups have geometric interpretations.  The Hochschild cohomology $H^\bullet(A,A)$ is the graded space of multivector fields on $M$.  The cup product corresponds to the wedge product of multivector fields, and the Gerstenhaber bracket corresponds to the Schouten-Nijenhuis bracket.  The Hochschild homology $HH_\bullet(A)$ is the space of differential forms on $M$.  The differential $B$ descends to a differential on $HH_\bullet(A)$, and this can be identified with the de Rham differential $d$ up to a constant.  The even (respectively odd) periodic cyclic homology can be identified with the direct sum of the even (respectively odd) de Rham cohomology groups.  In a dual fashion, the Hochschild cohomology $HH^\bullet(A)$ is the space of de Rham currents and the periodic cyclic cohomology can be identified with de Rham homology.  For more details, see \cite{MR823176}.

When passing to an arbitrary, not necessarily commutative, algebra $A$, we could view $H^\bullet(A,A)$ and $HH_\bullet(A)$ as spaces of noncommutative multivector fields and differential forms respectively.  However, these spaces have a tendency to be badly behaved.  For example, they may be too small or non-Hausdorff.  Instead, we shall work with the chain complexes $C^\bullet(A,A)$ and $C_\bullet(A)$, and view their elements as (generalized) noncommutative multivector fields and differential forms respectively.
Just as multivector fields act on differential forms by Lie derivative and contraction operations, there are Lie derivative and contraction operations
\[ L, \iota: C^\bullet(A,A) \to \End(C_\bullet(A)) \] for any algebra $A$, which we shall review in the next section.

\subsection{Operations on the cyclic complex}

The Cartan homotopy formula that follows was first observed by Rinehart in \cite{MR0154906} in the case where $D$ is a derivation, and later in full generality by Getzler in \cite{MR1261901}, see also \cite{MR2308582}, \cite{MR1667686}.  An elegant and conceptual proof of the Cartan homotopy formula can be found in \cite{MR1468938}.  Our conventions vary slightly from \cite{MR1261901}, and are like those in \cite{MR2308582}.

To simplify the notation of what follows, the elementary tensor $a_0 \otimes a_1 \otimes \ldots \otimes a_n \in C_n(A)$ will be written as $(a_0, a_1, \ldots , a_n)$.  All operators that are defined in this section are given algebraically on elementary tensors, and extend to continuous linear operators on the corresponding projective tensor products.

All commutators of operators that follow are graded commutators.  That is, if $S$ and $T$ are homogenous operators of degree $|S|$ and $|T|$, then
\[ [S, T] = ST - (-1)^{|S||T|}TS. \]

\subsubsection{Lie derivatives, contractions, and the Cartan homotopy formula}

Given a Hochschild cochain $D \in C^k(A,A)$, the \emph{Lie derivative along $D$} is the operator $L_D \in \End(C_{\bullet}(A))$  of degree $1 - k$ given by
\begin{align*}
&L_D(a_0 , \ldots , a_n)\\
&\qquad = \sum_{i=0}^{n-k+1}(-1)^{i(k-1)}(a_0, \ldots , D(a_i, \ldots , a_{i+k-1}), \ldots , a_n)\\
& \qquad \qquad + \sum_{i=1}^{k-1}(-1)^{in}(D(a_{n-i+1}, \ldots , a_n, a_0, \ldots , a_{k-1-i}), a_{k-i}, \ldots , a_{n-i}).
\end{align*}
In the case $D \in C^1(A,A)$, the above formula is just \[ L_D(a_0 , \ldots , a_n) = \sum_{i=0}^n (a_0 , \ldots a_{i-1} , D(a_i) , a_{i+1} , \ldots , a_n). \]  To be completely precise in the above formulas, we are identifying $C^k(A,A)$ as a subspace of $\Hom((A_+)^{\potimes_R k}, A)$ by extending by zero, so that
\[ D(a_1, \ldots , a_k) = 0, \qquad \text{ if } a_i = e \text{ for some } i. \]
The one exception, where we do not wish to extend by zero, is for the multiplication map $m$ of the unitization $A_+$.  Here, the formula for $L_m$ still gives a well-defined operator on $C_\bullet(A)$, and $L_m = b$.

\begin{proposition} \label{Proposition-LieDerivativeCommutator}
If $D, E \in C^{\bullet}(A,A)$, then 
\[ [L_D, L_E] = L_{[D,E]}, \qquad [b, L_D] = L_{\delta D}, \qquad [B, L_D] = 0. \]
\end{proposition}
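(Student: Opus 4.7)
The plan is to prove the three identities in a specific order, leveraging the fact that the second one reduces to the first.

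First, I would observe that the second identity is a consequence of the first. Recall that $b$ coincides with the Lie derivative $L_m$ along the multiplication $m$ of the unitization $A_+$; this is built into the definition of $L_m$, as noted in the paper's ``one exception'' remark. A direct unwinding of the Gerstenhaber bracket shows that $\delta D = [m, D]$ (up to the standard sign that is absorbed into the conventions being used). Granting the first identity, one then gets
\[ [b, L_D] = [L_m, L_D] = L_{[m, D]} = L_{\delta D}. \]
A small verification is required to confirm that using $m$ without extending it by zero is harmless: the only values of $L_m$ that matter are $m(a_i, a_{i+1})$, and these agree with the standard convention.

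Second, I would attack the first identity $[L_D, L_E] = L_{[D,E]}$ by a direct computation on elementary tensors. The formula for $L_D$ consists of ``straight'' insertion terms (indexed by $i = 0, \ldots, n-k+1$) and ``cyclic'' wrap-around terms (indexed by $i = 1, \ldots, k-1$). Composing $L_D \circ L_E$ produces four families of terms according to which type of insertion comes from $D$ and which from $E$. I would organize them into two groups: (a) terms in which the input slots used by $D$ and $E$ are disjoint, and (b) terms in which $D$ is inserted into a slot produced by $E$, or vice versa. The group (a) contributions cancel in the graded commutator $[L_D, L_E]$, while the group (b) contributions reorganize precisely into the two Gerstenhaber pre-Lie compositions $D \circ E$ and $E \circ D$, whose graded difference is $[D,E]$, yielding $L_{[D,E]}$. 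The key bookkeeping device is to tag each term by a triple (position of $D$-insertion, position of $E$-insertion, cyclic offset) and match terms across the two sides by reindexing the offset.

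Third, I would prove $[B, L_D] = 0$ by a similar direct calculation. Expanding $BL_D$ and $L_D B$ on an elementary tensor $(a_0, \ldots, a_n)$ produces a double sum indexed by the cyclic rotation $j$ coming from $B$ and the insertion position $i$ coming from $L_D$. The insertion convention $D(a_1, \ldots, a_k) = 0$ whenever some $a_\ell = e$ kills all terms in which the inserted unit $e$ lies within the arguments of $D$. Among the surviving terms, those in $BL_D$ can be put into bijection with those in $L_D B$ by a change of variables $j \mapsto j'$ depending on whether the $D$-insertion in $L_D B$ is straight or wrap-around. One then checks that the signs $(-1)^{jn}$ from $B$ and $(-1)^{i(k-1)}$ from $L_D$ line up under this bijection to give an overall sign of $+1$, producing the cancellation.

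The main obstacle is the sign and index bookkeeping in the first and third identities, particularly the interplay between the ``straight'' and ``wrap-around'' pieces of $L_D$. The wrap-around terms are what make $L_D$ act cyclically rather than merely as a Hochschild-style derivation, and they are also the source of the compatibility with $B$; they must be tracked carefully through every cyclic reindexing. Once the triple-indexing scheme is set up, the cancellations are systematic, and the identities follow.
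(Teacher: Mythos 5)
The paper does not supply a proof of this proposition at all: it cites Rinehart and Getzler, and later observes that the combined identity $[b+B, L_D] = L_{\delta D}$ would follow formally from the Cartan homotopy formula $\partial I = L$ (itself stated without proof) via $\partial L = \partial^2 I = 0$. Your direct computational sketch is essentially the argument one would find in those references, so the comparison is between your explicit computation and the paper's appeal to the literature; both are legitimate routes, and your sketch of the term bookkeeping (straight vs.\ wrap-around insertions, tagging by position pairs, killing terms with a unit $e$ in a $D$-slot) correctly identifies the organizing principles of such a proof.

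One point deserves more care than your ``small verification'' suggests. You derive $[b, L_D] = L_{\delta D}$ by applying the first identity with $E = m$. But $m$ is the \emph{unextended} multiplication of $A_+$ --- the paper's explicit ``one exception'' --- whereas the first identity as stated concerns $D, E \in C^\bullet(A,A)$, all of which are implicitly extended by zero on $A_+$. So you are not merely invoking the first identity; you are invoking an extension of it to a cochain outside the stated class. This extension is true, and the reason it works is precisely the degree-$0$ structure of $C_n(A) = A_+ \potimes A^{\potimes_R n}$: when $L_m$ and $L_D$ are composed, the unit $e$ can only ever appear in position zero of a resulting tensor, where $D$ (extended by zero) annihilates it, while $m$ is permitted to consume it via $m(e, a) = a$. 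This is exactly the same mechanism that makes $[B, L_D] = 0$ work, so the two verifications are not independent; if you are going to reduce the second identity to the first via $E = m$, you should state and check the extended version of the first identity as a lemma, rather than present it as a cosmetic remark. Alternatively, prove the second identity by the same direct expansion you use for the other two, which avoids the issue entirely and is hardly more work since the computation for $[L_D, L_E]$ already contains it as a special case.
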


So $C_{-\bullet}(A)$ and $C_{\per}(A)$ are differential graded modules over the differential graded Lie algebra $\g^\bullet(A)$.  In particular, the graded Lie algebra $H^{\bullet+1}(A,A)$ acts via Lie derivatives on both the Hochschild homology $HH_{-\bullet}(A)$ and the periodic cyclic homology $HP_{\bullet}(A)$.

Given a $k$-cochain $D \in C^k(A,A)$, the \emph{contraction with $D$} is the operator $\iota_D \in \End(C_\bullet(A))$ of degree $-k$ given by 
\[ \iota_D(a_0 , \ldots , a_n) = (-1)^{k-1}(a_0D(a_1, \ldots , a_k) , a_{k+1} , \ldots , a_n). \]

\begin{proposition} \label{Proposition-ContractionCommutator}
For any $D \in C^\bullet(A,A)$, $[b, \iota_D] = -\iota_{\delta D}$.
\end{proposition}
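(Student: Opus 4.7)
The plan is to verify the identity by direct computation on an elementary tensor $(a_0, a_1, \ldots , a_n)$ and matching like terms. Since $\iota_D$ has degree $-k$ and $b$ has degree $-1$, the graded commutator is
\[ [b, \iota_D] = b \iota_D - (-1)^k \iota_D b, \]
and the target $-\iota_{\delta D}$ has an overall sign $-(-1)^{(k+1)-1} = (-1)^{k+1}$ in front of the defining formula for $\delta D$ applied to the first $k+2$ slots.

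First I would expand $b\iota_D(a_0, \ldots , a_n)$. The operator $\iota_D$ collapses the first $k+1$ slots into the single entry $a_0 D(a_1,\ldots ,a_k)$ with overall sign $(-1)^{k-1}$. Applying $b$ afterwards produces three types of contributions: (i) a ``right collision'' term $(-1)^{k-1}(a_0 D(a_1,\ldots ,a_k) a_{k+1}, a_{k+2}, \ldots , a_n)$; (ii) ``tail mergers'' of the form $(-1)^{k-1}(-1)^j(a_0 D(a_1, \ldots , a_k), a_{k+1}, \ldots , a_{k+j} a_{k+j+1}, \ldots , a_n)$ for $1 \le j \le n-k-1$; and (iii) the cyclic wrap term $(-1)^{n-1}(a_n a_0 D(a_1,\ldots ,a_k), a_{k+1}, \ldots , a_{n-1})$.

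Next I would expand $(-1)^k \iota_D b(a_0, \ldots , a_n)$ by classifying the summands of $b(a_0,\ldots ,a_n)$ according to which adjacent pair gets multiplied. When the merge position is $j \geq k+1$, the product lies strictly to the right of the block contracted by $\iota_D$, so the contraction passes through; a quick sign count gives total sign $(-1)^{k}(-1)^{k+j}(-1)^{k-1} = (-1)^{k+j-1}$, which matches exactly the sign of the tail merger in $b\iota_D$, so these cancel. The cyclic term of $b$, namely $(-1)^n(a_n a_0, a_1, \ldots , a_{n-1})$, is sent by $\iota_D$ to $(-1)^{k-1}(a_n a_0 D(a_1,\ldots ,a_k), a_{k+1}, \ldots , a_{n-1})$, and another sign count shows this cancels the wrap term of $b\iota_D$.

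What survives in $b\iota_D - (-1)^k \iota_D b$ is therefore: the right collision from $b\iota_D$; the ``left collision'' $-(-1)^{k-1}(a_0 a_1 D(a_2,\ldots ,a_{k+1}), a_{k+2}, \ldots , a_n)$ coming from the $j = 0$ term of $b$; and, for each $1 \le j \le k$, a term $-(-1)^{k+j-1}(a_0 D(a_1, \ldots , a_j a_{j+1}, \ldots , a_{k+1}), a_{k+2}, \ldots , a_n)$ from the $j$-th summand of $b$, where now the merge falls inside the arguments of $D$. Matching these three families of surviving terms against the expansion of $-\iota_{\delta D}(a_0, \ldots , a_n)$ obtained by substituting the defining formula for $\delta D$ into $(-1)^{k+1}(a_0 \,\delta D(a_1, \ldots , a_{k+1}), a_{k+2}, \ldots , a_n)$ verifies the identity.

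The main obstacle is purely combinatorial: keeping track of signs through the reindexings, in particular confirming that $(-1)^{k-1}(-1)^j$ in the tail-merger piece of $b\iota_D$ is exactly undone by the corresponding piece of $(-1)^k \iota_D b$, and handling the cyclic term correctly by observing that under the wrap, $a_n a_0$ becomes the new zeroth slot on which $\iota_D$ acts. Boundary cases $k+1 > n$, where the contraction block overlaps the end of the tensor, follow the same bookkeeping with degenerate indices, and the convention that $D$ extended by zero on $A_+^{\potimes k}$ vanishes when an $e$ appears handles the unitization cleanly.
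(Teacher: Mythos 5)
You are flagging a sign issue, so let me be direct: the paper does not give a proof of this proposition at all; it is stated, along with Proposition~\ref{Proposition-LieDerivativeCommutator} and the Cartan homotopy formula, as a known result with pointers to Rinehart, Getzler, and others, modulo the convention change noted in the surrounding text. So there is no paper argument to compare against. Your direct term-by-term computation is the natural way to verify it, and your decomposition of terms into a right collision, a left collision, inner merges, tail mergers, and a cyclic wrap is exactly right, as is the cancellation of the tail mergers and the wrap against the corresponding pieces of $(-1)^k\iota_D b$.

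There is, however, a sign inconsistency in your tally of the surviving terms. You correctly wrote the graded commutator as $b\iota_D - (-1)^k\iota_D b$ and correctly carried the factor $(-1)^k$ through the tail-merger and wrap cancellations. But in the paragraph listing what survives, you dropped that factor from the two families coming out of $\iota_D b$: the left collision should carry sign $-(-1)^k(-1)^{k-1} = +1$, not $-(-1)^{k-1} = (-1)^k$, and the $j$-th inner merge should carry sign $-(-1)^k(-1)^{k+j-1} = (-1)^j$, not $-(-1)^{k+j-1} = (-1)^{k+j}$. As written, your three surviving coefficients $(-1)^{k-1},\, (-1)^k,\, (-1)^{k+j}$ match the target only when $k$ is even. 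With the $(-1)^k$ restored, the surviving families have coefficients $(-1)^{k-1}$, $+1$, and $(-1)^j$, which is precisely what one gets from
\[ -\iota_{\delta D}(a_0,\ldots,a_n) = (-1)^{k+1}\bigl(a_0\,\delta D(a_1,\ldots,a_{k+1}),a_{k+2},\ldots,a_n\bigr) \]
after substituting the paper's formula for $\delta D$ (whose leading term $D(a_1,\ldots,a_k)a_{k+1}$ carries no sign, the term $a_1D(a_2,\ldots,a_{k+1})$ carries $(-1)^{k+1}$, and the $j$-th merge carries $(-1)^{k-j+1}$) and simplifying the powers of $(-1)$. So the strategy is sound and the identity does hold as stated; only the displayed intermediate signs need the $(-1)^k$ put back in.
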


Although $\iota_D$ interacts well with $b$, it does not with the differential $B$, and needs to be adjusted for the cyclic complex.  Given $D \in C^k(A,A)$, let $S_D$ denote the operator on $C_{\bullet}(A)$ of degree $2-k$ given by
\begin{align*}
&S_D(a_0 , \ldots , a_n) = \sum_{i=1}^{n-k+1}\sum_{j=0}^{n-i-k+1} (-1)^{i(k-1) + j(n-k+1)}\\
& \qquad (e , a_{n-j+1} , \ldots , a_n , a_0 , \ldots , a_{i-1} , D(a_i, \ldots , a_{i+k-1}) , a_{i+k} \ldots , a_{n-j}),
\end{align*}  if $a_0 \in A$ and \[ S_D(e, a_1, \ldots , a_n) = 0. \]  The sum is over all cyclic permutations with $D$ appearing to the right of $a_0$.
Given $D \in C^\bullet(A,A)$, the \emph{cyclic contraction with $D$} is the operator \[ I_D = \iota_D + S_D. \]

\begin{theorem}[Cartan homotopy formula] \label{Theorem-CHF}
For any $D \in C^\bullet(A,A)$, \[ [b+B, I_D] = L_D - I_{\delta D}. \]
\end{theorem}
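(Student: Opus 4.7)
The plan is to verify the Cartan homotopy formula by direct combinatorial computation on elementary tensors, organizing the argument around the two Propositions already available.

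The first step is to reduce the identity. Since $I_D = \iota_D + S_D$ and Proposition~\ref{Proposition-ContractionCommutator} gives $[b, \iota_D] = -\iota_{\delta D}$, the formula $[b+B, I_D] = L_D - I_{\delta D}$ is equivalent to
\[ [B, \iota_D] + [b, S_D] + [B, S_D] = L_D - S_{\delta D}. \]
The next step is to dispose of $[B, S_D]$. By construction $S_D$ outputs chains whose first tensor factor is $e$, and $B$ vanishes on such chains, so $BS_D = 0$. Dually, $B$ itself outputs chains starting with $e$, and $S_D$ vanishes on these by definition, so $S_D B = 0$. Hence $[B, S_D] = 0$ and what remains to prove is
\[ [B, \iota_D] + [b, S_D] = L_D - S_{\delta D}. \]

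Next I would expand both sides on a chain $(a_0, \ldots, a_n)$ with $a_0 \in A$; the case $a_0 = e$ is simpler because many of the operators vanish outright. For $[B, \iota_D]$, one applies $\iota_D$ first to collapse the initial block into $(-1)^{k-1} a_0 D(a_1, \ldots, a_k)$ and then cyclically permutes with $e$ prepended; the other order cyclically permutes first and collapses afterward. Grouping by the position of $D$ and of $e$ in the output, these sums assemble into exactly those summands of $L_D(a_0,\ldots,a_n)$ that have $e$ sitting at the front after a cyclic rotation — that is, the ``wrap-around'' contributions corresponding to the second sum in the formula for $L_D$. For $[b, S_D]$, one splits the calculation according to which adjacent pair is multiplied by $b$: (i) terms where $b$ multiplies the prepended $e$ with the next entry, which close up the cyclic sum and supply the remaining ``interior'' part of $L_D$; (ii) terms where $b$ multiplies two ordinary entries neither of which is $D(\ldots)$, which cancel pairwise between $bS_D$ and $S_D b$; (iii) terms where $b$ multiplies $D(\ldots)$ with an adjacent $a_i$, which by the definition of the Hochschild coboundary reproduce $-S_{\delta D}$.

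The main obstacle is the sign and index bookkeeping. Each of the operators $L_D$, $\iota_D$, and $S_D$ is defined as a signed sum indexed by one or two cyclic parameters, and the signs depend simultaneously on $k$, $n$, the position of $D$, and the position of $e$. Matching the regrouped output of $[B, \iota_D] + [b, S_D]$ term by term with the two-part expression for $L_D$ requires a careful reindexing that correctly separates ``interior'' insertions of $D$ from ``wrap-around'' insertions; this is the only truly computational step, and once the change of summation is established the identity follows at once.
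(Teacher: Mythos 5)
The paper does not actually prove Theorem~\ref{Theorem-CHF}; it states the Cartan homotopy formula as a known result and refers to Rinehart, Getzler, and especially the conceptual proof in the reference cited immediately before the theorem. So your approach is necessarily ``different'' in that it attempts a direct verification that the paper deliberately avoids.

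Your preliminary reductions are correct and clean. Writing $I_D = \iota_D + S_D$, invoking Proposition~\ref{Proposition-ContractionCommutator} to cancel $-\iota_{\delta D}$ on both sides, and killing $[B, S_D]$ by observing that $S_D$ outputs chains beginning with $e$ (on which $B$ vanishes) while $B$ outputs chains beginning with $e$ (on which $S_D$ vanishes) are all valid and match the standard first moves. This correctly reduces the theorem to
\[
[B, \iota_D] + [b, S_D] \;=\; L_D - S_{\delta D}.
\]

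The gap is that this last identity is the entire substance of the theorem, and you do not actually establish it. The narrative sketch — that $[B,\iota_D]$ supplies the wrap-around part of $L_D$, that the $e\cdot a$ and $a\cdot e$ collapses in $[b, S_D]$ supply the interior part, that the terms where $b$ hits a pair inside $D$'s arguments produce $-S_{\delta D}$, and that everything else cancels — is plausible as an organizing principle, but it is asserted rather than verified. In particular, the two operator orders $B\iota_D$ and $\iota_D B$ each produce both wrapping and non-wrapping blocks depending on where the cyclic rotation index falls relative to $k$, so the split into ``interior vs. wrap-around'' does not cleanly align with the split into the two operator products; that realignment, together with the sign $(-1)^{i(k-1)+j(n-k+1)}$ in $S_D$ and the $(-1)^{jn}$ in $B$, is precisely where the work lies. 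You flag this yourself (``the sign and index bookkeeping\ldots is the only truly computational step''), but deferring it means the argument as written is a proof outline, not a proof. Either carry the reindexing through explicitly in at least a representative case and indicate how it generalizes, or do as the paper does and cite one of the existing proofs — the one using the coalgebra/bar-complex interpretation is substantially less error-prone than a raw index chase.
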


Theorem \ref{Theorem-CHF} implies that the Lie derivative along a Hochschild cocycle $D \in C^{\bullet}(A,A)$ is continuously chain homotopic to zero in the periodic cyclic complex.  Thus, the action of $H^{\bullet+1}(A,A)$ on $HP_{\bullet}(A)$ by Lie derivatives is zero.

The results of this section can be summarized in another way.  Consider the endomorphism complex $\End_R(C_{\per}(A))$ whose coboundary map is given by the graded commutator with $b + B$.  Let
\[ \Op(A) = \Hom_R\big(\g^\bullet(A), \End_R(C_{\per}(A))\big), \] and let $\partial$ denote the boundary map in $\Op(A)$.  Given $\Phi \in \Op(A)$ and $D \in \g^\bullet(A)$, we shall write $\Phi_D := \Phi(D)$.  So
\[ (\partial \Phi)_D = [b + B, \Phi_D] - (-1)^{|\Phi|}\Phi_{\delta D}. \]  Note that the Lie derivative $L$ and the cyclic contraction $I$ are elements of $\Op(A)$ of even and odd degrees respectively.  Theorem~\ref{Theorem-CHF} is exactly the statement
\[ \partial I = L. \]  So it follows from this that $\partial L = 0$, i.e.
\[ [b+B, L_D] = L_{\delta D}, \]
as in Proposition~\ref{Proposition-LieDerivativeCommutator}.

\begin{example} \label{Example-GradedAlgebraCyclicHomology}
Consider a nonnegatively graded algebra $A = \bigoplus_{n=0}^\infty A_n$.  Let $D: A \to A$ be the algebra derivation defined by $D(a) = n\cdot a$ for all $a \in A_n$.    The complex $C_{\per}(A)$ decomposes into eigenspaces for $L_D$, depending on the total degree of a tensor.  However, $L_D$ acts by zero on $HP_\bullet(A)$ by Theorem \ref{Theorem-CHF}.  Thus the nontrivial part of the homology is contained entirely in the $0$-eigenspace for $L_D$, which coincides with $C_{\per}(A_0)$.  In this way, we see the inclusion $A_0 \to A$ induces an isomorphism $HP_\bullet(A_0) \cong HP_\bullet(A)$.
\end{example}

%
%
%

\section{$C^\infty(J)$-modules}

Let $J \subseteq \R$ be a nonempty open interval, which will serve as a parameter space.  Loosely speaking, our general approach to deformation theory is as follows: given a family of objects $\{E_t\}_{t \in J}$ that depend smoothly on $t$, we form a bundle $E$ over $J$ whose fiber at $t$ is $E_t$.  The object of interest is then the space of smooth sections of the bundle $E$.  If each $E_t$ has an underlying vector space structure (for example, if we are dealing with algebras, chain complexes, Lie algebras, etc), then the space of smooth sections is a $C^\infty(J)$-module.  In what follows, the vector spaces are in $\LCTVS$, so we will deal with locally convex $C^\infty(J)$-modules.  The space of sections will generally inherit new structure by considering any additional structure $\{E_t\}_{t \in J}$ had fiberwise.

Let $X \in \LCTVS$ and consider the space $C^\infty(J,X)$ of infinitely differentiable functions on $J$ with values in $X$.  By a differentiable function $f: J \to X$, we mean that the usual limit
\[ \lim_{h \to 0} \frac{f(t+h) - f(t)}{h} \] exists in the topology on $X$ for all $t \in J$.
We equip $C^\infty(J,X)$ with its usual topology of uniform convergence of functions and all their derivatives on compact subsets of $J$.  We shall write $C^\infty(J) = C^\infty(J, \C)$, which is a nuclear Fr\'{e}chet algebra under this topology.  Notice $C^\infty(J,X)$ is a locally convex module over $C^\infty(J)$ using pointwise scalar multiplication.
Since $X$ is complete,
\[ C^\infty(J,X) \cong C^\infty(J) \potimes X, \] see e.g. \cite[Theorem~44.1]{MR0225131}.  In other words, $C^\infty(J,X)$ is a free locally convex $C^\infty(J)$-module.  If $X$ is a Fr\'{e}chet space, then $C^\infty(J,X)$ is a Fr\'{e}chet space.

The space $C^\infty(J,X)$ is equipped with continuous linear ``evaluation maps" \[ \epsilon_t: C^\infty(J,X) \to X \] for each $t \in J$ given by $\epsilon_t(x) = x(t)$.

There are several notions for what is meant by saying a collection \[\{F_t: X \to Y\}_{t \in J}\] of continuous linear maps depends smoothly on $t$.  We consider one of the strongest.

\begin{definition} \label{Definition-SmoothFamilyOfLinearMaps}
Given $X, Y \in \LCTVS$, a \emph{smooth family of continuous linear maps} from $X$ to $Y$ is a collection of continuous linear maps $\{F_t: X \to Y\}_{t \in J}$ with the property that there exists a continuous linear map $\widetilde{F}: X \to C^\infty(J, Y)$ such that $F_t = \epsilon_t \circ \widetilde{F}$ for all $t \in J$.
\end{definition}

From the universal property of free modules, we see that such a smooth family induces a continuous $C^\infty(J)$-linear map $F: C^\infty(J,X) \to C^\infty(J,Y)$ given by
\[ F(x)(t) = F_t(x(t)), \qquad x \in C^\infty(J,X). \] Conversely, all continuous $C^\infty(J)$-linear maps between free $C^\infty(J)$-modules are induced by a smooth family of continuous linear maps.

Given a smooth family $\{F_t: X \to Y\}_{t \in J}$ of continuous linear maps, it is necessary that the map
\[ t \mapsto F_t(x) \]
is smooth for each $x \in X$.  Under the technical assumption that $X$ is barreled (Fr\'{e}chet spaces are examples of barreled spaces), this is also sufficient.  The Banach-Steinhaus theorem (uniform boundedness principle) \cite[Theorem 33.1]{MR0225131} is the main advantage of considering barreled spaces.

\begin{proposition} \label{Proposition-SmoothFamilyForBarreledX}
If $X$ is barreled, then $\{F_t: X \to Y\}_{t \in J}$ is a smooth family of continuous linear maps if and only if the map
\[ t \mapsto F_t(x) \] is smooth for every $x \in X$.
\end{proposition}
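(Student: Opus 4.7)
The plan is to handle one direction directly from the definitions, and to treat the converse by defining $\widetilde{F}(x)(t) := F_t(x)$ and then using the Banach--Steinhaus theorem, which is available because $X$ is barreled, to promote pointwise regularity in $t$ into the uniform estimates needed for the $C^\infty(J,Y)$-topology.

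The easy direction is immediate from Definition~\ref{Definition-SmoothFamilyOfLinearMaps}: if $\widetilde{F}: X \to C^\infty(J,Y)$ witnesses smoothness of $\{F_t\}_{t \in J}$, then for each $x \in X$ the function $t \mapsto F_t(x) = \widetilde{F}(x)(t)$ is smooth simply because $\widetilde{F}(x)$ lies in $C^\infty(J,Y)$.

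For the converse, assume $t \mapsto F_t(x)$ is smooth for every $x$, and define $\widetilde{F}: X \to C^\infty(J,Y)$ by $\widetilde{F}(x)(t) = F_t(x)$. This is well-defined and $\C$-linear; the real work is continuity. The topology on $C^\infty(J,Y)$ is generated by seminorms of the form $\|f\|_{K,n,p} = \sup_{t \in K} p\bigl(f^{(n)}(t)\bigr)$, where $K \subset J$ is compact, $n \geq 0$, and $p$ is a continuous seminorm on $Y$. So everything reduces to producing, for each such triple, a continuous seminorm $q$ on $X$ with
$$p\bigl(F_t^{(n)}(x)\bigr) \leq q(x) \qquad \text{for all } t \in K \text{ and all } x \in X,$$
where $F_t^{(n)}(x) := \frac{d^n}{dt^n} F_t(x)$ exists by hypothesis.

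I would accomplish this in two applications of Banach--Steinhaus. First, by induction on $n$, each map $F_t^{(n)}: X \to Y$ is continuous linear: writing $F_t^{(n)}(x) = \lim_{h \to 0} h^{-1}\bigl(F_{t+h}^{(n-1)}(x) - F_t^{(n-1)}(x)\bigr)$ exhibits $F_t^{(n)}$ as the pointwise limit of continuous linear maps on the barreled space $X$, hence itself continuous. Second, for $K \subset J$ compact and $n$ fixed, the family $\{F_t^{(n)}\}_{t \in K}$ is pointwise bounded (for each $x$, the smooth curve $t \mapsto F_t^{(n)}(x)$ is bounded on $K$), so Banach--Steinhaus upgrades this to equicontinuity, which is exactly the uniform seminorm estimate displayed above.

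The main obstacle, and the only point where the barreled hypothesis enters, is the passage from pointwise smoothness in $t$ to seminorm estimates that are uniform over $t \in K$ and linear in $x$. Without barreledness there is no mechanism to convert pointwise control of a family of continuous linear maps into equicontinuity, so the construction of a continuous $\widetilde{F}$ would break down; with barreledness, Banach--Steinhaus supplies precisely what is required in both steps.
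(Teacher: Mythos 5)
Your proof is correct and follows essentially the same route as the paper's: one Banach--Steinhaus application (with induction) to show each $F_t^{(n)}$ is continuous as a pointwise limit of continuous linear maps, and a second application to upgrade pointwise boundedness of $\{F_t^{(n)}\}_{t \in K}$ on compacts to equicontinuity, which translates directly into the defining seminorm estimates for $C^\infty(J,Y)$. The structure, the role of barreledness, and the reduction to seminorms all match the paper's argument.
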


\begin{proof}
Suppose $t \mapsto F_t(x)$ is smooth for each $x \in X$, and let $F_t^{(n)}(x)$ denote the $n$-th derivative of this map.  The linear map $F_t^{(n)}: X \to X$ is in fact continuous for each $t$.  Using induction, this follows from the Banach-Steinhaus theorem because $F_t^{(n)}(x)$ is a pointwise limit of continuous linear maps by its very definition.

We must show that the map $\widetilde{F}: X \to C^\infty(J,Y)$ defined by
\[ \widetilde{F}(x)(t) = F_t(x) \] is continuous.  Our assumption certainly implies that the map $t \mapsto F_t^{(n)}(x)$ is continuous.  Thus, for any compact $K \subset J$, any $n$, and any $x \in X$, the set
\[ \{ F_t^{(n)}(x) ~|~ t \in K\} \] is compact in $Y$, hence bounded.  By the Banach-Steinhaus theorem, the set $\{F_t^{(n)}: X \to Y\}_{t \in K}$ is equicontinuous.  Thus for any continuous seminorm $q$ on $Y$, there exists a continuous seminorm $p$ on $X$ such that
\[ q(F_t^{(n)}(x)) \leq p(x), \qquad \forall x \in X, \quad \forall t \in K. \]  Consequently,
\[ \sup_{t \in K} q(F_t^{(n)}(x)) \leq p(x), \qquad \forall x \in X.\] The expression on the left, which depends on $K, n,$ and $q$, is one of the defining seminorms of $C^\infty(J,Y)$ applied to $\widetilde{F}(x)$.  The topology of $C^\infty(J,Y)$ is generated by all such seminorms as $K, n,$ and $q$ vary.  This shows that $\widetilde{F}$ is continuous.  
\end{proof}

One can also view a collection $\{F_t: X \to Y\}_{t \in J}$ as a map $F: J \to \Hom(X,Y)$.

\begin{proposition} \label{Proposition-SmoothPathsInHom}
Consider a collection $\{F_t: X \to Y\}_{t \in J}$ of continuous linear maps and the corresponding map $F: J \to \Hom(X,Y)$.
\begin{enumerate}
\item If $\{F_t\}_{t \in J}$ is a smooth family, then $F: J \to \Hom(X,Y)$ is a smooth curve.
\item If the curve $F: J \to \Hom(X,Y)$ is smooth, then $F: J \to \Hom_\sigma(X,Y)$ is smooth (with respect to the topology of pointwise convergence).
\item If $X$ is barreled and the curve $F: J \to \Hom_\sigma(X,Y)$ is smooth, then $\{F_t\}_{t \in J}$ is a smooth family.
\end{enumerate}
\end{proposition}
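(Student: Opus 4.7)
The plan is to handle the three implications in turn. Parts (2) and (3) will reduce quickly to topological comparisons and to Proposition~\ref{Proposition-SmoothFamilyForBarreledX}, while part (1) contains the only real analytic content, and is where I expect the main obstacle to lie.

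For (1), I would start from the witnessing map $\widetilde{F}: X \to C^\infty(J,Y)$ provided by the smooth family hypothesis and produce the candidate derivative
\[ F'_t := \epsilon_t \circ D \circ \widetilde{F}: X \to Y, \]
where $D: C^\infty(J,Y) \to C^\infty(J,Y)$ is the (continuous) derivative operator. This is a composition of continuous linear maps, hence $F'_t \in \Hom(X,Y)$. The task is to show that $(F_{t+h} - F_t)/h \to F'_t$ in $\Hom(X,Y)$, i.e. uniformly on bounded subsets of $X$. The crucial input is that $\widetilde{F}$, being continuous linear, sends any bounded $B \subset X$ to a bounded subset of $C^\infty(J,Y)$, where boundedness means each of the defining seminorms $\sup_{s \in K} q(f^{(n)}(s))$ is finite on the set. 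Taylor's formula with integral remainder, applied to $f = \widetilde{F}(x)$, yields
\[ \frac{f(t+h) - f(t)}{h} - f'(t) = h \int_0^1 (1-u) f''(t+hu)\,du, \]
so that for every continuous seminorm $q$ on $Y$ and every $h$ with $[t-|h|, t+|h|] \subset J$,
\[ \sup_{x \in B} q\!\left( \tfrac{F_{t+h}(x) - F_t(x)}{h} - F'_t(x) \right) \leq \tfrac{|h|}{2} \sup_{x \in B,\, s \in [t-|h|, t+|h|]} q\bigl(\widetilde{F}(x)''(s)\bigr), \]
and the right-hand side tends to $0$ as $h \to 0$ since the supremum is finite for $|h|$ small. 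To upgrade differentiability to smoothness, note that $F'$ is itself a smooth family witnessed by $D \circ \widetilde{F}: X \to C^\infty(J,Y)$, so an induction on the order of the derivative closes (1).

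For (2), the topology on $\Hom_\sigma(X,Y)$ is coarser than that on $\Hom(X,Y)$, so any difference quotient that converges in the stronger topology converges to the same limit in the weaker one. Applied at every order, this directly gives smoothness of $F: J \to \Hom_\sigma(X,Y)$.

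For (3), I would unpack what smoothness of $F: J \to \Hom_\sigma(X,Y)$ means: the existence of each derivative $F^{(n)}$ in the pointwise topology is, by definition of $\Hom_\sigma$, equivalent to the existence in $Y$ of $\lim_{h \to 0}(F^{(n-1)}_{t+h}(x) - F^{(n-1)}_t(x))/h$ for every $x \in X$. Hence for every $x \in X$ the curve $t \mapsto F_t(x)$ is smooth, and Proposition~\ref{Proposition-SmoothFamilyForBarreledX}, invoking the barreled hypothesis on $X$, upgrades this to the conclusion that $\{F_t\}_{t \in J}$ is a smooth family. The main obstacle is the uniformity estimate in (1); everything hinges on recognizing boundedness in $C^\infty(J,Y)$ as uniform bounds on all derivatives over compacta and feeding that into the second-order Taylor remainder. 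The remaining content is largely a dictionary between three equivalent languages: a continuous linear map into $C^\infty(J,Y)$, a smooth curve in $\Hom(X,Y)$, and a smooth curve in $\Hom_\sigma(X,Y)$.
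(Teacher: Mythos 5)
Your proof is correct and follows the paper's own argument closely: parts (2) and (3) reduce, as in the paper, to the observation that the $\Hom_\sigma$ topology is coarser and to Proposition~\ref{Proposition-SmoothFamilyForBarreledX} respectively, while part (1) uses the identical second-order Taylor remainder estimate, with your phrasing of ``$\widetilde{F}$ sends bounded sets of $X$ to bounded sets of $C^\infty(J,Y)$'' being an equivalent reformulation of the paper's choice of a dominating seminorm $p$ on $X$ via continuity of $\widetilde{F}$. Your induction on the order of the derivative (witnessed by $D\circ\widetilde{F}$) is the same device as the paper's ``replace $F$ with $F^{(n-1)}$.''
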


\begin{proof}
The second statement is trivial and the third is a restatement of Proposition \ref{Proposition-SmoothFamilyForBarreledX}.  For the first statement, we shall prove $F$ is differentiable, and the proof that $F$ is $n$ times differentiable follows by replacing $F$ with $F^{(n-1)}$.  Fix $t \in J$, $\epsilon > 0$, and a continuous seminorm $q$ on $Y$.  By continuity of $\widetilde{F}$, there is a seminorm $p$ on $X$ such that
\[ \sup_{u \in [t-\epsilon, t+\epsilon]} q(F_u''(x)) \leq p(x), \qquad \forall x \in X. \]
From Taylor's formula
\[ F_{t+h}(x) = F_t(x) + F_t'(x)h + \int_t^{t+h} F_u''(x)(t+h-u)du, \] we see
\[ q\left( \frac{F_{t+h}(x) - F_t(x)}{h} - F_t'(x) \right) \leq \sup_{u \in [t,t+h]} q(F_u''(x))\frac{h}{2} \leq p(x)\frac{h}{2} \] for $|h| < \epsilon$.  Given a bounded subset $A \subset X$, consider the seminorm on $\Hom(X,Y)$
\[ q_A(G) = \sup_{x \in A} q(G(x)), \qquad \forall G \in \Hom(X,Y). \]  Then we have shown
\[ q_A\left( \frac{F_{t+h} - F_t}{h} - F_t'\right) \leq C_A\frac{h}{2}, \] where $C_A = \sup_{x \in A}p(x) < \infty$.  Since the right side goes to $0$ as $h \to 0$, this proves $\frac{d}{dt} F_t = F_t'$ in the topology of $\Hom(X,Y)$.
\end{proof}

\begin{corollary} \label{Corollary-BanachSpaceInvertibleSmoothFamily}
Let $X$ be a Banach space and let $\{F_t: X \to X\}_{t \in J}$ be a smooth family of continuous linear maps such that each $F_t$ is bijective.  Then $\{F_t^{-1}\}_{t \in J}$ is a smooth family as well.  Consequently, the map $F: C^\infty(J,X) \to C^\infty(J,X)$ induced by $\{F_t\}_{t \in J}$ is a topological isomorphism of $C^\infty(J)$-modules.
\end{corollary}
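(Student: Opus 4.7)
The plan is to exploit the fact that when $X$ is a Banach space, $\Hom(X,X)$ coincides with the Banach algebra $B(X)$ of bounded operators equipped with the operator norm (since on a Banach space, uniform convergence on bounded sets is just norm convergence). Inversion is then smooth (indeed real-analytic) on the open set of invertibles, and we can combine this with Proposition \ref{Proposition-SmoothPathsInHom} to pass between the ``curve'' and ``family'' points of view.

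First, each $F_t$ is a continuous bijection between Banach spaces, so by the open mapping theorem $F_t^{-1} \in \Hom(X,X)$ and $F_t$ is invertible in the Banach algebra $B(X)$. By Proposition \ref{Proposition-SmoothPathsInHom}(1), the hypothesis that $\{F_t\}_{t \in J}$ is a smooth family implies that $F \colon J \to \Hom(X,X)$ is smooth. Next, recall that on the open set $\mathrm{GL}(X) \subset B(X)$ of invertible operators, the inversion map $G \mapsto G^{-1}$ is smooth, as can be seen from the Neumann series expansion
\[ (F + H)^{-1} = \sum_{n=0}^\infty (-1)^n (F^{-1}H)^n F^{-1}, \]
valid for $\|H\| < \|F^{-1}\|^{-1}$. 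Composing the smooth curve $t \mapsto F_t$ with inversion, we conclude that $t \mapsto F_t^{-1}$ is a smooth curve in $\Hom(X,X)$.

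By Proposition \ref{Proposition-SmoothPathsInHom}(2), the curve $t \mapsto F_t^{-1}$ is also smooth into $\Hom_\sigma(X,X)$. Since $X$ is a Banach space, it is barreled, so Proposition \ref{Proposition-SmoothPathsInHom}(3) applies and shows that $\{F_t^{-1}\}_{t \in J}$ is a smooth family of continuous linear maps. By Definition \ref{Definition-SmoothFamilyOfLinearMaps} and the universal property of free modules, this family induces a continuous $C^\infty(J)$-linear map $G \colon C^\infty(J, X) \to C^\infty(J, X)$, which is the fiberwise inverse of $F$. Hence $F$ is a topological isomorphism of $C^\infty(J)$-modules.

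The only place where there is anything to check is the passage from ``$\{F_t\}$ smooth'' to ``$\{F_t^{-1}\}$ smooth''; the main obstacle — and the reason Banach spaces must appear in the hypothesis — is the need for inversion to be a smooth operation on operators, which requires the operator-norm topology (equivalently, a Banach algebra structure on $\Hom(X,X)$). For more general locally convex $X$, neither the open mapping theorem nor the power-series argument for smoothness of inversion is available, so this argument is essentially Banach-specific.
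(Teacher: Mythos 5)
Your proof is correct and follows essentially the same route as the paper: open mapping theorem for continuity of each $F_t^{-1}$, smoothness of inversion in the Banach algebra $\Hom(X,X)$ composed with the smooth curve $t \mapsto F_t$, and Proposition \ref{Proposition-SmoothPathsInHom} to convert back and forth between curves in $\Hom(X,X)$ and smooth families. The only difference is that you spell out the Neumann series and the chain through parts (1)--(3) of Proposition \ref{Proposition-SmoothPathsInHom} more explicitly than the paper does, which is a reasonable bit of added care but not a new idea.
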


\begin{proof}
That each $F_t^{-1}$ is continuous follows from the open mapping theorem.  It is well-known that the inversion map on the set of invertibles of the Banach algebra $\Hom(X,X)$ is differentiable.  If we view $\{F_t\}_{t \in J}$ as a differentiable path in $\Hom(X,X)$, then it follows from the chain rule that the path corresponding to $\{F_t^{-1}\}_{t \in J}$ is differentiable.  From Proposition \ref{Proposition-SmoothPathsInHom}, $\{F_t^{-1}\}_{t \in J}$ is a smooth family.  The induced endomorphism of $C^\infty(J,X)$ is clearly inverse to $F$.
\end{proof}

We've described three different meanings for maps $\{F_t: X \to Y\}_{t \in J}$ to depend smoothly on $t$.  The corresponding inclusions
\[ \Hom_{C^\infty(J)}\left(C^\infty(J,X), C^\infty(J,Y)\right) \to C^\infty\left(J, \Hom(X,Y)\right) \to C^\infty\left(J,\Hom_\sigma(X,Y)\right) \]
are continuous.  They are linear, but not necessarily topological, isomorphisms when $X$ is barreled.

\begin{proposition} \label{Proposition-BanachSpaceSmoothPathsOfHoms}
The canonical map
\[ \Hom_{C^\infty(J)}\left(C^\infty(J,X), C^\infty(J,Y)\right) \to C^\infty\left(J, \Hom(X,Y)\right) \]
is a topological isomorphism if either
\begin{enumerate}
\item $X$ and $Y$ are Banach spaces, or
\item $X$ is a nuclear Fr\'{e}chet space and $Y \in \LCTVS$.
\end{enumerate}
\end{proposition}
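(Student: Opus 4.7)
The plan is to factor the canonical map through the intermediate space $\Hom(X, C^\infty(J,Y))$. In both cases $X$ is Fr\'{e}chet and $C^\infty(J)$ is nuclear Fr\'{e}chet, so Proposition~\ref{Proposition-FreeModuleUMPTopologicalIsomorphism} applied with $R = C^\infty(J)$ and $M = C^\infty(J,Y)$ furnishes a topological isomorphism
\[ \Hom_{C^\infty(J)}\bigl(C^\infty(J,X),\, C^\infty(J,Y)\bigr) \;\cong\; \Hom\bigl(X,\, C^\infty(J,Y)\bigr). \]
It then suffices to establish a topological isomorphism $\Hom(X, C^\infty(J,Y)) \cong C^\infty(J, \Hom(X,Y))$.

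I would define $T \colon \Hom(X, C^\infty(J,Y)) \to C^\infty(J, \Hom(X,Y))$ by $T(G)(t) = \epsilon_t \circ G$, with candidate inverse $S(F)(x)(t) = F(t)(x)$. For continuous linear $G$, the collection $\{T(G)(t)\}_{t \in J}$ is a smooth family in the sense of Definition~\ref{Definition-SmoothFamilyOfLinearMaps} with $\widetilde{T(G)} = G$, so Proposition~\ref{Proposition-SmoothPathsInHom}(1) produces the smooth curve $T(G)\colon J \to \Hom(X,Y)$. Conversely, given smooth $F \colon J \to \Hom(X,Y)$, post-composition with the continuous linear evaluation $\epsilon_x \colon \Hom(X,Y) \to Y$ makes $t \mapsto F(t)(x)$ a smooth curve in $Y$, so $S(F)(x) \in C^\infty(J,Y)$; continuity of $S(F) \colon X \to C^\infty(J,Y)$ follows from Banach-Steinhaus (barreledness of $X$ holds in both cases) via Proposition~\ref{Proposition-SmoothPathsInHom}(3). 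This makes $T$ and $S$ mutually inverse linear isomorphisms.

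For continuity, I plan to match the two families of seminorms directly. The topology of $C^\infty(J,\Hom(X,Y))$ is generated by
\[ \sigma_{K,n,A,q}(F) = \sup_{t \in K}\sup_{x \in A} q\bigl(F^{(n)}(t)(x)\bigr) \]
for compact $K \subset J$, $n \geq 0$, bounded $A \subset X$, and continuous seminorms $q$ on $Y$, while $\Hom(X, C^\infty(J,Y))$ is topologized by
\[ \widetilde{\tau}_{A,K,n,q}(G) = \sup_{x \in A}\sup_{t \in K} q\bigl(G(x)^{(n)}(t)\bigr). \]
Since $\epsilon_x$ is continuous linear, it commutes with differentiation of a smooth curve, yielding the key identity $T(G)^{(n)}(t)(x) = G(x)^{(n)}(t)$. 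Under this identity the two seminorm families correspond exactly, so both $T$ and $S$ are continuous, and in fact the correspondence is essentially an isometry of directed sets of seminorms.

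The main obstacle is justifying the identity $T(G)^{(n)}(t)(x) = G(x)^{(n)}(t)$ as an equation in $\Hom(X,Y)$, which requires that $T(G)$ be smooth for the topology of uniform convergence on bounded subsets of $X$ rather than merely pointwise. This is precisely the content of Proposition~\ref{Proposition-SmoothPathsInHom}(1), and it is here that the hypotheses of the two cases enter: Proposition~\ref{Proposition-FreeModuleUMPTopologicalIsomorphism} needs $X$ Fr\'{e}chet (so that the $C^\infty(J)$-linear Hom reduces to the $\C$-linear Hom out of $X$), while Proposition~\ref{Proposition-SmoothPathsInHom}(3) needs $X$ barreled (so that pointwise smooth families are genuinely smooth). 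In case (1) one could alternatively give a direct proof of the seminorm estimate via Taylor's theorem in the Banach operator norm, avoiding the intermediate Hom space entirely.
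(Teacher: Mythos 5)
Your proof is correct, but it takes a genuinely different route from the paper. The paper handles the two cases by two different soft arguments: for case (1), it observes that both sides are Fr\'{e}chet spaces (this is where $Y$ Banach is needed, to make $\Hom(X,Y)$ and hence $C^\infty(J,\Hom(X,Y))$ Fr\'{e}chet, and $X$ Banach to make $\Hom(X,C^\infty(J,Y))$ metrizable via Proposition~\ref{Proposition-FreeModuleUMPTopologicalIsomorphism}) and then invokes the open mapping theorem on the already-established continuous bijection; for case (2), it uses the nuclearity isomorphism $X^* \potimes Z \cong \Hom(X,Z)$ together with associativity of $\potimes$ to produce a chain of topological isomorphisms. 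You instead factor through $\Hom(X,C^\infty(J,Y))$ as in case (2) of the paper, but then identify $\Hom(X,C^\infty(J,Y))$ with $C^\infty(J,\Hom(X,Y))$ by an explicit seminorm-for-seminorm correspondence, using Proposition~\ref{Proposition-SmoothPathsInHom}(1) to make $T$ land in the right space, Proposition~\ref{Proposition-SmoothPathsInHom}(3) for $S$, and the identity $T(G)^{(n)}(t)(x) = G(x)^{(n)}(t)$ (which is just that the continuous linear map $\epsilon_x$ commutes with differentiation of curves) to match the seminorms $\sigma_{K,n,A,q}$ and $\tau_{A,K,n,q}$ exactly. This is a unified, more elementary argument that avoids both the open mapping theorem and the Tr\`{e}ves nuclearity isomorphism. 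One small remark worth noting: your argument actually never uses the Banach assumption on $Y$ in case (1), nor the nuclearity of $X$ in case (2) -- Proposition~\ref{Proposition-FreeModuleUMPTopologicalIsomorphism} already applies because $C^\infty(J)$ is nuclear Fr\'{e}chet, and the seminorm exchange needs only that $X$ is Fr\'{e}chet (hence barreled). So you have in fact proved the stronger statement that the canonical map is a topological isomorphism for every Fr\'{e}chet $X$ and arbitrary $Y \in \LCTVS$, subsuming both of the stated cases; your closing paragraph slightly misattributes the role of the case hypotheses, which in your proof are never used beyond $X$ being Fr\'{e}chet.
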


\begin{proof}
If $X$ and $Y$ are Banach spaces, then we claim that the domain and codomain are both Fr\'{e}chet spaces.  Then the result follows from the open mapping theorem.  Since $\Hom(X,Y)$ is a Banach space, $C^\infty\left( J, \Hom(X,Y)\right)$ is a Fr\'{e}chet space.  Proposition \ref{Proposition-FreeModuleUMPTopologicalIsomorphism} gives a topological isomorphism
\[ \Hom_{C^\infty(J)}\left(C^\infty(J,X), C^\infty(J,Y)\right) \cong \Hom\left(X, C^\infty(J,Y) \right), \] and the topology of the latter is generated by a countable family of seminorms.

If $X$ is a nuclear Fr\'{e}chet space, then there is a topological isomorphism
\[ X^* \potimes Z \cong \Hom(X, Z) \]
for any $Z \in \LCTVS$, see \cite[Proposition 50.5]{MR0225131}.  Using this and Proposition \ref{Proposition-FreeModuleUMPTopologicalIsomorphism}, we have topological isomorphisms
\begin{align*}
\Hom_{C^\infty(J)} \left( C^\infty(J,X), C^\infty(J,Y) \right) &\cong \Hom\left(X, C^\infty(J,Y) \right)\\
&\cong X^* \potimes C^\infty(J) \potimes Y\\
&\cong C^\infty(J) \potimes \Hom(X,Y)\\
&\cong C^\infty\left(J, \Hom(X,Y)\right).
\end{align*}
\end{proof}


An important case to consider is when $Y = \C$.  Recall that $M^\dual$ denotes the topological $C^\infty(J)$-linear dual of a $C^\infty(J)$-module $M$.

\begin{corollary} \label{Corollary-DualFreeModule}
If $X$ is either a Banach space or a nuclear Fr\'{e}chet space, then
\[ C^\infty(J,X)^\dual \cong C^\infty(J, X^*). \]
\end{corollary}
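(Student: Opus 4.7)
The plan is to obtain this as a direct specialization of Proposition~\ref{Proposition-BanachSpaceSmoothPathsOfHoms} with $Y = \C$. Indeed, unpacking the notation, the topological $C^\infty(J)$-linear dual module is by definition
\[ C^\infty(J,X)^\dual = \Hom_{C^\infty(J)}\bigl(C^\infty(J,X),\, C^\infty(J)\bigr), \]
and since $C^\infty(J) = C^\infty(J,\C)$, this is precisely the left-hand side of the isomorphism in Proposition~\ref{Proposition-BanachSpaceSmoothPathsOfHoms} applied to $Y = \C$.

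To invoke that proposition one needs to check that the pair $(X,\C)$ lands in one of the two admissible cases. If $X$ is a Banach space, then $Y = \C$ is trivially a Banach space, so case (1) applies. If instead $X$ is a nuclear Fr\'{e}chet space, then $Y = \C \in \LCTVS$ automatically, so case (2) applies. In either case we conclude
\[ \Hom_{C^\infty(J)}\bigl(C^\infty(J,X),\, C^\infty(J,\C)\bigr) \cong C^\infty\bigl(J,\, \Hom(X,\C)\bigr), \]
as topological $C^\infty(J)$-modules. Since $\Hom(X,\C) = X^*$ by definition of the strong dual, the right-hand side is $C^\infty(J, X^*)$, which yields the desired isomorphism.

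There is essentially no obstacle: the work has already been done in Proposition~\ref{Proposition-BanachSpaceSmoothPathsOfHoms}, and the corollary is merely a relabelling. The only tiny point worth spelling out, if one wishes to be pedantic, is that $C^\infty(J)$-linearity of the resulting correspondence matches the natural $C^\infty(J)$-module structure on $C^\infty(J,X^*)$ given by pointwise scalar multiplication; this follows at once by tracing through the chain of isomorphisms used in the proof of Proposition~\ref{Proposition-BanachSpaceSmoothPathsOfHoms}, each of which is manifestly $C^\infty(J)$-linear.
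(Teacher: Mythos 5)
Your proposal is correct and is precisely the paper's own argument: the corollary is obtained by specializing Proposition~\ref{Proposition-BanachSpaceSmoothPathsOfHoms} to $Y = \C$, with case (1) applying when $X$ is Banach and case (2) when $X$ is nuclear Fr\'{e}chet. The paper presents the corollary immediately after that proposition with no separate proof, so your write-up simply makes explicit what the paper leaves implicit.
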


\section{Connections and parallel translation}

\subsection{Connections}

Since we are only dealing with one-parameter deformations, we shall only treat connections on $C^\infty(J)$-modules where the interval $J$ represents the parameter space.  As there is only one direction to differentiate in, a connection is determined by its covariant derivative.  In what follows, we shall identify the two notions, and will commonly refer to covariant differential operators as connections.
\begin{definition}
A \emph{connection} on a locally convex $C^\infty(J)$-module $M$ is a continuous $\C$-linear map $\nabla: M \to M$ such that
\[ \nabla( f\cdot m) = f' \cdot m + f\cdot \nabla m \]
for all $f \in C^\infty(J)$ and $m \in M$.
\end{definition}
It is immediate from this Leibniz rule that the difference of two connections is a continuous $C^{\infty}(J)$-linear map.  Further, given any connection $\nabla$ and continuous $C^{\infty}(J)$-linear map $F:M \to M$, the operator $\nabla - F$ is also a connection.  So if the space of connections is nonempty, then it is an affine space parametrized by the space $\End_{C^{\infty}(J)}(M)$ of continuous $C^\infty(J)$-linear endomorphisms.  Since the operator $\frac{d}{dt}$ is an example of a connection on a free module $C^\infty(J,X)$, we obtain the following classification.

\begin{proposition} \label{Proposition-FormOfConnection}
If $\nabla$ is a connection on $C^\infty(J,X)$, where $X \in \LCTVS$, then
\[ \nabla = \frac{d}{dt} - F, \]
for some continuous $C^\infty(J)$-linear map $F: C^\infty(J,X) \to C^\infty(J,X)$.
\end{proposition}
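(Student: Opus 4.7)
The plan is to exhibit $\frac{d}{dt}$ as a specific connection on $C^\infty(J,X)$ and then invoke the affine-space structure on the set of connections that was observed in the paragraph just before the statement. Concretely, I would define $F := \frac{d}{dt} - \nabla$ and verify that this is a continuous $C^\infty(J)$-linear endomorphism of $C^\infty(J,X)$; solving for $\nabla$ then gives the claim.

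The first step is to check that pointwise differentiation $\frac{d}{dt}$ genuinely is a connection on $C^\infty(J,X)$. Continuity of $\frac{d}{dt}: C^\infty(J,X) \to C^\infty(J,X)$ is immediate from the definition of the topology on $C^\infty(J,X)$, since that topology is generated by seminorms controlling all derivatives uniformly on compact subsets of $J$, and differentiation simply shifts the order index. The Leibniz rule
\[
\tfrac{d}{dt}(f\cdot m) = f'\cdot m + f\cdot \tfrac{d}{dt} m, \qquad f \in C^\infty(J), \ m \in C^\infty(J,X),
\]
holds by the ordinary product rule applied pointwise.

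The second step is the key observation already recorded in the paragraph preceding the proposition: for any two connections $\nabla_1, \nabla_2$ on a locally convex $C^\infty(J)$-module $M$, the difference $\nabla_1 - \nabla_2$ is continuous and, by subtracting the two Leibniz rules, $C^\infty(J)$-linear. Applying this to $\nabla_1 = \frac{d}{dt}$ and $\nabla_2 = \nabla$, we obtain that $F := \frac{d}{dt} - \nabla$ lies in $\End_{C^\infty(J)}(C^\infty(J,X))$. Rearranging yields $\nabla = \frac{d}{dt} - F$.

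There is no serious obstacle here; the statement is essentially a specialization of the affine-space structure on connections to the free module $C^\infty(J,X)$. The only content beyond that structural remark is the verification that $\frac{d}{dt}$ is a \emph{continuous} connection on $C^\infty(J,X)$, which follows from the explicit description of its topology.
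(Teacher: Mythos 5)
Your proof is correct and matches the paper's argument exactly: the paper also observes that $\frac{d}{dt}$ is a connection on the free module $C^\infty(J,X)$ and then applies the affine-space structure on connections (noted in the paragraph just before the proposition) to conclude that $F := \frac{d}{dt} - \nabla$ is a continuous $C^\infty(J)$-linear endomorphism. The only difference is that you spell out the verification that $\frac{d}{dt}$ is a continuous connection, which the paper treats as evident.
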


An element in the kernel of a connection $\nabla$ will be called a \emph{parallel section} for $\nabla$.  Suppose $M$ and $N$ are two locally convex $C^\infty(J)$-modules with connections $\nabla_M$ and $\nabla_N$ respectively.  We shall say that a continuous $C^\infty(J)$-linear map $F: M \to N$ is \emph{parallel} if $F \circ \nabla_M = \nabla_N \circ F$.  
A parallel map sends parallel sections to parallel sections.

\begin{proposition} \label{Proposition-TensorAndDualConnections}
Given locally convex $C^\infty(J)$-modules $M$ and $N$ with connections $\nabla_M$ and $\nabla_N$,
\begin{enumerate}[(i)]
\item \label{Part-TensorConnection}
 the operator $\nabla_M \otimes 1 + 1 \otimes \nabla_N$ is a connection on $M \potimes_{C^\infty(J)} N$.
\item \label{Part-DualConnection}
the operator $\nabla_M^\dual$ on $M^\dual = \Hom_{C^\infty(J)}(M, C^\infty(J))$ given by
\[ (\nabla_M^\dual\phi)(m) = \frac{d}{dt} \phi(m) - \phi(\nabla_M m) \]
is a connection.
\end{enumerate}
\end{proposition}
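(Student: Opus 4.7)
The plan is to construct each operator explicitly, verify it lands in the correct space, and then check the Leibniz rule on generators; continuity will follow from the universal properties of the tensor product and dual module topologies.

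For part~\eqref{Part-TensorConnection}, the key subtlety is that the operator must descend past the $C^\infty(J)$-balancing relations in $M \potimes_{C^\infty(J)} N$. I will build the connection via the universal property by considering the jointly continuous $\C$-bilinear map
\[ B: M \times N \to M \potimes_{C^\infty(J)} N, \qquad B(m,n) = \nabla_M m \otimes n + m \otimes \nabla_N n. \]
The crucial step is to check $B$ is $C^\infty(J)$-balanced, i.e.\ $B(fm, n) = B(m, fn)$. Applying the Leibniz rules for $\nabla_M$ and $\nabla_N$ respectively, both sides expand to $f'm \otimes n + f(\nabla_M m \otimes n + m \otimes \nabla_N n)$ after using $f'm \otimes n = m \otimes f'n$ in the tensor product over $C^\infty(J)$. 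Hence $B$ factors through $M \potimes_{C^\infty(J)} N$, yielding a continuous $\C$-linear endomorphism I will call $\nabla$. The Leibniz rule $\nabla(f \cdot (m \otimes n)) = f'(m \otimes n) + f\nabla(m\otimes n)$ then follows from the same calculation, verified on elementary tensors and extended to all of $M \potimes_{C^\infty(J)} N$ by continuity and density. This well-definedness check is the main obstacle — everything else is routine once the balancing works out.

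For part~\eqref{Part-DualConnection}, I first need to verify that $\nabla_M^\dual \phi$ is actually an element of $M^\dual$, i.e.\ that it is continuous and $C^\infty(J)$-linear. Continuity is immediate since $\nabla_M^\dual \phi = \tfrac{d}{dt} \circ \phi - \phi \circ \nabla_M$ is a difference of compositions of continuous maps. For $C^\infty(J)$-linearity, a direct computation
\[ (\nabla_M^\dual \phi)(fm) = \tfrac{d}{dt}(f \phi(m)) - \phi(f'm + f \nabla_M m) = f\bigl(\tfrac{d}{dt}\phi(m) - \phi(\nabla_M m)\bigr) \]
uses the Leibniz rule for $\tfrac{d}{dt}$ to cancel the $f'\phi(m)$ terms.

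Continuity of $\nabla_M^\dual$ itself on $M^\dual$ (equipped with the topology of uniform convergence on bounded subsets) reduces to showing $\phi \mapsto \tfrac{d}{dt}\circ \phi$ and $\phi \mapsto \phi \circ \nabla_M$ are continuous; for the first, any defining seminorm $p$ on $C^\infty(J)$ is dominated by some $q$ when composed with $\tfrac{d}{dt}$, and for the second, $\nabla_M$ maps bounded sets to bounded sets. Finally, the Leibniz rule
\[ \nabla_M^\dual(f\phi)(m) = \tfrac{d}{dt}(f\phi(m)) - f\phi(\nabla_M m) = f'\phi(m) + f(\nabla_M^\dual \phi)(m) \]
follows from a one-line calculation, giving $\nabla_M^\dual(f\phi) = f'\phi + f \nabla_M^\dual \phi$ as required.
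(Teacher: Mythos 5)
The paper states this proposition without proof, so there is nothing to compare against directly; judged on its own merits, your argument is correct and is essentially the routine verification one would expect the author to have had in mind. Both computations — the $C^\infty(J)$-balancing of $B(m,n) = \nabla_M m \otimes n + m \otimes \nabla_N n$ for part (i), and the two Leibniz-rule cancellations for part (ii) — check out.

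One small point of care for part (i): the universal property quoted in the paper is phrased for jointly continuous \emph{$R$-bilinear} maps, whereas your $B$ is only $\C$-bilinear (it fails $B(fm,n)=fB(m,n)$, which is the whole point). What you are actually invoking is the explicit construction of $M \potimes_{C^\infty(J)} N$ as the completion of $(M \potimes_\C N)/\overline{K}$: the $\C$-bilinear universal property gives $\widehat{B}: M \potimes_\C N \to M \potimes_{C^\infty(J)} N$, your balancing check shows $\widehat{B}$ kills the generators of $K$, continuity pushes this to the closure $\overline{K}$, and completeness of the target lets you extend from the dense quotient to its completion. Your write-up compresses this into the phrase ``factors through $M\potimes_{C^\infty(J)}N$,'' which is fine, but it would be worth being explicit that you are appealing to the construction rather than to the stated $R$-bilinear universal property.
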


The definition of $\nabla_M^\dual$ ensures that the canonical pairing
\[ \langle \cdot , \cdot \rangle : M^\dual \otimes_{C^\infty(J)} M \to C^\infty(J) \] is a parallel map, where we consider the ground ring $C^\infty(J)$ with the connection $\frac{d}{dt}$.  That is,
\[ \frac{d}{dt} \langle \phi, m \rangle = \langle \nabla_{M^*}\phi, m \rangle + \langle \phi, \nabla_M m \rangle. \]

\subsection{Parallel translation in free modules}

Let $X \in \LCTVS$ and let $M = C^\infty(J,X)$ be the corresponding free module.
\begin{definition}
A connection $\nabla$ on $M$ is \emph{integrable} if there is a parallel isomorphism
\[ F: (M, \nabla) \to \left(C^\infty(J,X), \frac{d}{dt}\right) \] of locally convex $C^\infty(J)$-modules.
\end{definition}
We shall express this condition in terms of parallel translation.  We will think of $M$ as sections of the trivial bundle whose fiber over $t \in J$ is $M_t \cong X$.  Parallel translation relies on the existence and uniqueness of a solution $m \in M$ to the initial value problem
\[ \nabla m = 0, \qquad m(s) = x \] for any given $s \in J$ and $x \in M_s$.  In this case, the parallel translation operator
\[ P^\nabla_{s,t}: M_s \to M_t \] is the linear map defined by $P^\nabla_{s,t}(x) = m(t)$, where $m$ is the unique solution to the above initial value problem.  Evidently, $P^\nabla_{s,t}$ is a linear isomorphism with inverse $P^\nabla_{t,s}$.

\begin{theorem} \label{Theorem-IntegrableConnections}
A connection $\nabla$ on $M = C^\infty(J,X)$ is integrable if and only if the following two conditions hold:
\begin{enumerate}[(i)]
\item For every $s \in J$ and $x \in M_s$, there is a unique $m \in M$ such that
\[ \nabla m = 0, \qquad m(s) = x. \]
\item The linear map $P^\nabla: X \to C^\infty(J \times J, X)$ given by
\[ P^\nabla(x)(s,t) = P^\nabla_{s,t}(x) \] is well-defined and continuous.
\end{enumerate}
\end{theorem}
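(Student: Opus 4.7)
The plan is to set up a direct correspondence between parallel topological isomorphisms $F:(M,\nabla)\to (C^\infty(J,X),\tfrac{d}{dt})$ and the data $\{P^\nabla_{s,t}\}$, reducing each direction to elementary manipulations. For the \emph{only if} direction, suppose such an $F$ exists. Since $F$ is a $C^\infty(J)$-linear topological automorphism of a free module, it is induced by a smooth family $\{F_t:X\to X\}_{t\in J}$ of continuous linear maps, and the same applies to $F^{-1}$ via a family $\{G_t\}_{t\in J}$ satisfying $G_tF_t=F_tG_t=\mathrm{id}_X$ for every $t$. Because $F$ intertwines $\nabla$ with $\tfrac{d}{dt}$, a section $m$ is parallel for $\nabla$ precisely when $F(m)$ is constant. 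Consequently, the unique parallel section with $m(s)=x$ is $m(t)=G_t(F_s(x))$, verifying (i) and yielding the formula $P^\nabla_{s,t}(x)=G_t(F_s(x))$. To deduce (ii), I would factor $P^\nabla$ as
\[ X\xrightarrow{\widetilde{F}} C^\infty(J,X)\xrightarrow{\widetilde{G}_*} C^\infty\bigl(J,\, C^\infty(J,X)\bigr)\cong C^\infty(J\times J,X), \]
where $\widetilde{F},\widetilde{G}:X\to C^\infty(J,X)$ encode the two smooth families in the sense of Definition~\ref{Definition-SmoothFamilyOfLinearMaps} and $\widetilde{G}_*$ denotes post-composition with $\widetilde{G}$.

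For the \emph{if} direction, fix a basepoint $t_0\in J$ and set $F_t:=P^\nabla_{t,t_0}$, whose pointwise inverse is $P^\nabla_{t_0,t}$ by (i). Restricting the continuous linear map $P^\nabla:X\to C^\infty(J\times J,X)$ from (ii) to the slices $s=t_0$ and $t=t_0$ exhibits both $\{F_t\}$ and $\{F_t^{-1}\}$ as smooth families, and together they assemble into a $C^\infty(J)$-linear topological automorphism $F:C^\infty(J,X)\to C^\infty(J,X)$ given by $F(m)(t)=P^\nabla_{t,t_0}(m(t))$. The key point is that $F$ is parallel. Writing $\nabla=\tfrac{d}{dt}-D$ via Proposition~\ref{Proposition-FormOfConnection}, parallel sections satisfy the linear ODE $m'(t)=D_t(m(t))$; differentiating the identity $P^\nabla_{t,t_0}\circ P^\nabla_{t_0,t}=\mathrm{id}_X$ then produces
\[ \tfrac{d}{dt}P^\nabla_{t,t_0}=-P^\nabla_{t,t_0}\circ D_t, \]
after which the Leibniz rule gives $\tfrac{d}{dt}F(m)(t)=P^\nabla_{t,t_0}((\nabla m)(t))=F(\nabla m)(t)$, as required.

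The main obstacle is the topological bookkeeping around joint smoothness in the two parameters $s,t$. Condition (ii) is strictly stronger than separate smoothness in $s$ and $t$, and is needed precisely so that the factorization displayed above lands in $C^\infty(J\times J,X)$ rather than in a weaker space of separately smooth functions; conversely, it is exactly what one needs on the \emph{if} side to conclude that both $\{F_t\}$ and $\{F_t^{-1}\}$ qualify as smooth families in the sense of Definition~\ref{Definition-SmoothFamilyOfLinearMaps}, and hence induce genuine continuous $C^\infty(J)$-linear endomorphisms of $C^\infty(J,X)$. Once this point is handled, the remainder is the routine ODE calculation above.
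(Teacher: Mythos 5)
Your proof is correct, but it departs from the paper's in both directions of the equivalence.

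For the \emph{only if} direction, you construct $P^\nabla$ explicitly from the two smooth families $\{F_t\}$, $\{G_t\}$ and factor the map $P^\nabla\colon X\to C^\infty(J\times J,X)$ through post-composition. The paper instead dispatches this direction in one line: the canonical connection $\tfrac{d}{dt}$ satisfies (i) and (ii), and both conditions are invariants of parallel isomorphism. Your route is more concrete (it produces the formula $P^\nabla_{s,t}=G_t\circ F_s$, which is essentially Corollary~\ref{Corollary-ParallelTranslationFormula}), while the paper's is shorter; both buy the same thing.

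For the \emph{if} direction, the construction is the same as the paper's (fix a basepoint and assemble the fiberwise parallel transports into mutually inverse $C^\infty(J)$-linear maps), but the verification of parallelness differs. The paper checks $G\circ\tfrac{d}{dt}=\nabla\circ G$ only on elements of the form $1\otimes x$, where both sides vanish trivially, and then extends by $C^\infty(J)$-linearity, the Leibniz rule for $\nabla$, and continuity; this cleverly avoids ever differentiating a composition of a $t$-dependent operator family with a $t$-dependent section. You instead differentiate $P^\nabla_{t,t_0}\circ P^\nabla_{t_0,t}=\mathrm{id}$ and apply a Leibniz rule to $t\mapsto P^\nabla_{t,t_0}(m(t))$. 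That computation is correct, but be aware that in the general $\LCTVS$ setting the step $\tfrac{d}{dt}\bigl(P^\nabla_{t,t_0}(m(t))\bigr)=\bigl(\tfrac{\partial}{\partial t}P^\nabla_{t,t_0}\bigr)(m(t))+P^\nabla_{t,t_0}(m'(t))$ is not automatic: it requires an equicontinuity argument (the standard one splits the difference quotient and uses condition (ii) to control $P^\nabla_{t+h,t_0}$ uniformly near $t$). This is a real, if minor, justification that your sketch glosses over; the paper's density argument is precisely the device for sidestepping it.
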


\begin{proof}
Notice that the connection $\frac{d}{dt}$ on $C^\infty(J,X)$ satisfies both conditions.  Moreover, both conditions are preserved by parallel isomorphism.  So an integrable connection $\nabla$ satisfies (i) and (ii).

Conversely, suppose $\nabla$ satisfies (i) and (ii) and fix a value $s \in J$.  By condition (ii), the linear maps
\[ \widetilde{F}: X \to C^\infty(J,M_s), \qquad \widetilde{F}(x)(t) = P^\nabla_{t,s}(x) \]
\[ \widetilde{G}: M_s \to M, \qquad \widetilde{G}(x)(t) = P^\nabla_{s,t}(x) \]
are continuous, and induce mutually inverse $C^\infty(J)$-linear isomorphisms
\[ F: M \to C^\infty(J, M_s), \qquad G: C^\infty(J,M_s) \to M \]
by the universal property of free modules.  We'll show that
\[ G: \left(C^\infty(J,M_s), \frac{d}{dt}\right) \to \left(M, \nabla\right) \]
is parallel.  By $C^\infty(J)$-linearity, the Leibniz rule, and continuity, it suffices to check
\[ G \circ \frac{d}{dt} = \nabla \circ G \] for elements of the form $1 \otimes x \in C^\infty(J) \potimes M_s$.  But this follows immediately by definition of parallel translation.  That $F = G^{-1}$ is parallel follows automatically.
\end{proof}

The second condition in the theorem can be weakened if $X$ is barreled.

\begin{theorem}
If $X \in \LCTVS$ is barreled, then a connection $\nabla$ on $M = C^\infty(J,X)$ is integrable if and only if the following two conditions hold:
\begin{enumerate}[(i)]
\item For every $s \in J$ and $x \in M_s$, there is a unique $m \in M$ such that
\[ \nabla m = 0, \qquad m(s) = x. \]
\item Each $P^\nabla_{s,t}: M_s \to M_t$ is continuous, and for each fixed $x \in X$, the map $(s,t) \mapsto P^\nabla_{s,t}(x)$ is smooth (i.e. all mixed partial derivatives exist).
\end{enumerate}
\end{theorem}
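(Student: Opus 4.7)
The plan is to reduce this strengthened version to the earlier Theorem~\ref{Theorem-IntegrableConnections} by showing that conditions (i) and (ii) here imply the stronger version of (ii) from that theorem, namely that the map $P^\nabla : X \to C^\infty(J \times J, X)$, $P^\nabla(x)(s,t) = P^\nabla_{s,t}(x)$, is continuous. The ``only if'' direction is immediate: integrability yields the conditions of Theorem~\ref{Theorem-IntegrableConnections}, and our condition (ii) is strictly weaker. For the converse, the key tool, just as in Proposition~\ref{Proposition-SmoothFamilyForBarreledX}, is the Banach--Steinhaus theorem, available since $X$ is barreled.

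First I would show by induction on $|\alpha|$ that for every $(s,t) \in J \times J$ and every multi-index $\alpha$ in the two variables, the linear map $x \mapsto \partial^{\alpha}_{(s,t)} P^\nabla_{s,t}(x)$ is continuous from $X$ to $X$. The base case $\alpha = 0$ is hypothesis (ii). For the inductive step, $\partial^{\alpha + e_j}_{(s,t)} P^\nabla_{s,t}(x)$ exists pointwise in $x$ by the smoothness assumption and equals the pointwise limit of difference quotients of $\partial^{\alpha}_{(s,t)} P^\nabla_{s,t}$, which are continuous linear in $x$ by the inductive hypothesis. The Banach--Steinhaus theorem then promotes the pointwise limit to a continuous linear map.

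Next, fix a compact $K \subset J \times J$, a multi-index $\alpha$, and a continuous seminorm $q$ on $X$. For each fixed $x \in X$, the continuity of $(s,t) \mapsto \partial^{\alpha}_{(s,t)} P^\nabla_{s,t}(x)$ (which follows from the assumed smoothness) makes the set $\{\partial^{\alpha}_{(s,t)} P^\nabla_{s,t}(x) : (s,t) \in K\}$ compact, hence bounded, in $X$. Applying Banach--Steinhaus to the family of continuous linear operators $\{\partial^{\alpha}_{(s,t)} P^\nabla_{s,t} : (s,t) \in K\}$ yields a continuous seminorm $p$ on $X$ such that
\[ q\bigl(\partial^{\alpha}_{(s,t)} P^\nabla_{s,t}(x)\bigr) \leq p(x), \qquad \forall x \in X,\; \forall (s,t) \in K. \]
Since the topology of $C^\infty(J \times J, X)$ is generated by the seminorms $f \mapsto \sup_{(s,t) \in K} q(\partial^\alpha f(s,t))$, this estimate says exactly that $P^\nabla$ is continuous into $C^\infty(J \times J, X)$, and Theorem~\ref{Theorem-IntegrableConnections} completes the proof.

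The main technical obstacle is the inductive verification that each iterated partial derivative operator is continuous in $x$, which must be carried out before Banach--Steinhaus can be invoked for the equicontinuity estimate; once this is in hand, everything else is a straightforward two-variable version of the one-variable argument used in Proposition~\ref{Proposition-SmoothFamilyForBarreledX}.
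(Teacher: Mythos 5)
Your proposal is correct and takes essentially the same approach as the paper: the paper's proof consists of the single instruction to mimic the proof of Proposition~\ref{Proposition-SmoothFamilyForBarreledX} to establish continuity of $P^\nabla \colon X \to C^\infty(J \times J, X)$ and then invoke Theorem~\ref{Theorem-IntegrableConnections}. You carry out that mimicry in full detail — the induction on multi-indices via Banach--Steinhaus, the compactness/boundedness step, the equicontinuity estimate — which is precisely what the paper leaves implicit.
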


\begin{proof}
Mimic the proof of Proposition~\ref{Proposition-SmoothFamilyForBarreledX} to show that
\[ P^\nabla: X \to C^\infty(J\times J, X), \qquad P^\nabla(x)(s,t) = P^\nabla_{s,t}(x) \] is continuous.
\end{proof}

Essentially, an integrable connection $\nabla$ is one for which we can parallel translate, and moreover the parallel translation operators $P^\nabla_{s,t}$ are isomorphisms of topological vector spaces that depend smoothly on both parameters $s$ and $t$.

Now suppose $X, Y \in \LCTVS$ and $M = C^\infty(J,X)$ and $N = C^\infty(J,Y)$.  Suppose $F: M \to N$ is a continuous $C^\infty(J)$-linear map and $\{F_t: M_t \to N_t\}_{t \in J}$ is the corresponding smooth family of continuous linear maps. 

\begin{proposition} \label{Proposition-ParallelFamilyCommutesWithParallelTransport}
In the above situation, if $F: M \to N$ is parallel with respect to integrable connections on $M$ and $N$, then the diagram
\[ \xymatrix{
M_s \ar[r]^-{F_s} \ar[d]_-{P^{\nabla_M}_{s,t}} & N_s \ar[d]^-{P^{\nabla_N}_{s,t}}\\
M_t \ar[r]^-{F_t} & N_t\\
} \]
commutes for all $t,s \in J$.
\end{proposition}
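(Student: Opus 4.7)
The plan is to translate the diagram commutativity into a statement about parallel sections and then invoke uniqueness of parallel translation, which is built into integrability via Theorem~\ref{Theorem-IntegrableConnections}(i).

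First I would fix $s \in J$ and $x \in M_s$. Using integrability of $\nabla_M$, let $m \in M$ be the unique parallel section with $m(s) = x$, so by definition $m(t) = P^{\nabla_M}_{s,t}(x)$ for all $t \in J$. Applying $F$, I form the section $F(m) \in N$. Because $F$ is parallel,
\[ \nabla_N(F(m)) = F(\nabla_M m) = F(0) = 0, \]
so $F(m)$ is a parallel section of $N$. Since $F$ is the continuous $C^\infty(J)$-linear map corresponding to the smooth family $\{F_t\}_{t \in J}$, evaluation at a point commutes with $F$, that is, $\epsilon_t \circ F = F_t \circ \epsilon_t$; in particular $F(m)(s) = F_s(m(s)) = F_s(x)$.

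Next I would invoke uniqueness in condition~(i) of Theorem~\ref{Theorem-IntegrableConnections} applied to $\nabla_N$: the parallel section of $N$ taking the value $F_s(x)$ at $t = s$ is unique, and its value at $t$ is by definition $P^{\nabla_N}_{s,t}(F_s(x))$. Hence
\[ F(m)(t) = P^{\nabla_N}_{s,t}(F_s(x)). \]
On the other hand, evaluating $F(m)$ at $t$ directly gives
\[ F(m)(t) = F_t(m(t)) = F_t\bigl(P^{\nabla_M}_{s,t}(x)\bigr). \]
Equating the two expressions yields $F_t \circ P^{\nabla_M}_{s,t} = P^{\nabla_N}_{s,t} \circ F_s$ on $M_s$, which is exactly the commutativity of the diagram.

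There is no real obstacle here; the only point requiring care is the identity $\epsilon_t \circ F = F_t \circ \epsilon_t$, which is immediate from the definition of the smooth family $\{F_t\}$ associated to $F$ via the universal property of free modules (Definition~\ref{Definition-SmoothFamilyOfLinearMaps} and the discussion following it). Everything else is a direct application of the defining property of parallel sections and the uniqueness clause guaranteed by integrability.
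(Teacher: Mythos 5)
Your proof is correct and matches the paper's argument essentially step for step: take the unique $\nabla_M$-parallel section $m$ through $x$ over $s$, observe that $F(m)$ is $\nabla_N$-parallel because $F$ is a parallel map, and invoke uniqueness of parallel sections to identify $F(m)$ with the parallel transport of $F_s(x)$. The only difference is that you spell out the intermediate identities $\epsilon_t \circ F = F_t \circ \epsilon_t$ and $\nabla_N(F(m)) = F(\nabla_M m) = 0$, which the paper leaves implicit.
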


\begin{proof}
Given $x \in M_s$, let $m \in M$ be the unique $\nabla_M$-parallel section through $x$.  Then $F(m)$ is the unique $\nabla_N$-parallel section through $F(m)(s) = F_s(x)$.  Consequently,
\[ P^{\nabla_N}_{s,t}(F_s(x)) = F(m)(t) = F_t(m(t)) = F_t(P^{\nabla_M}_{s,t}(x)). \]
\end{proof}

If $N$ has the trivial connection $\frac{d}{dt}$, we obtain the following.

\begin{corollary} \label{Corollary-ParallelTranslationFormula}
Given any integrable connection $\nabla$ on $M$ and parallel isomorphism
\[ F: (M, \nabla) \to \left(C^\infty(J,X), \frac{d}{dt}\right), \]
then $P^\nabla_{s,t} = F_t^{-1} \circ F_s: M_s \to M_t$.
\end{corollary}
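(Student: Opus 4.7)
The plan is to obtain this as an immediate application of Proposition~\ref{Proposition-ParallelFamilyCommutesWithParallelTransport}, once one identifies the parallel translation operators of the trivial connection $\frac{d}{dt}$ on $C^\infty(J,X)$.

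First I would observe that for the trivial connection, a section $m \in C^\infty(J,X)$ satisfies $\frac{dm}{dt} = 0$ if and only if it is constant. Thus, given any $s \in J$ and any $x \in X$ (identified with the fiber at $s$), the unique solution to the initial value problem of Theorem~\ref{Theorem-IntegrableConnections} is the constant section $t \mapsto x$. Under the canonical identification $\epsilon_t: C^\infty(J,X) \to X$ of each fiber with $X$, this shows that $P^{d/dt}_{s,t}: X \to X$ is the identity for all $s, t \in J$. The continuity and smoothness requirements of Theorem~\ref{Theorem-IntegrableConnections} are trivially met by the smooth map $(s,t,x) \mapsto x$, so $\frac{d}{dt}$ is integrable.

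Next I would apply Proposition~\ref{Proposition-ParallelFamilyCommutesWithParallelTransport} to the given parallel isomorphism $F: (M, \nabla) \to (C^\infty(J,X), \frac{d}{dt})$. Writing $\{F_t: M_t \to X\}_{t \in J}$ for the corresponding smooth family of linear isomorphisms, the commutative square provided by that proposition reads
\[ F_t \circ P^\nabla_{s,t} = P^{d/dt}_{s,t} \circ F_s = F_s. \]
Since each $F_t$ is invertible, this rearranges to $P^\nabla_{s,t} = F_t^{-1} \circ F_s$, which is precisely the claim.

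There is essentially no obstacle in this argument beyond bookkeeping; the entire content of the corollary is that conjugation by a trivializing parallel isomorphism recovers parallel translation, and this is built directly into the preceding proposition once one knows that parallel translation for $\frac{d}{dt}$ is the identity under the natural fiberwise identifications.
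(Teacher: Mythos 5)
Your argument is correct and is exactly the route the paper intends: the corollary is stated immediately after Proposition~\ref{Proposition-ParallelFamilyCommutesWithParallelTransport} with the remark that one takes $N = C^\infty(J,X)$ with the trivial connection, for which parallel transport is the identity. Spelling out that $P^{d/dt}_{s,t} = \mathrm{id}$ and then rearranging $F_t \circ P^\nabla_{s,t} = F_s$ is precisely the intended content.
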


\begin{proposition} \label{Proposition-IntegrableTensorAndDual}
\begin{enumerate}
\item If $\nabla_M$ and $\nabla_N$ are integrable connections on $M$ and $N$, then $\nabla_{\potimes} := \nabla_M \otimes 1 + 1 \otimes \nabla_n$ is integrable on $M \potimes_{C^\infty(J)} N$, and
\[ P^{\nabla_{\potimes}}_{s,t} = P^{\nabla_M}_{s,t} \otimes P^{\nabla_N}_{s,t}: M_s \potimes N_s \to M_t \potimes N_t. \]
\item If $X$ is either a Banach space or a nuclear Fr\'{e}chet space, and $M = C^\infty(J,X)$ has an integrable connection $\nabla_M$, then the dual connection $\nabla_M^\dual$ is integrable on $M^\dual = C^\infty(J,X^*)$ (see Corollary \ref{Corollary-DualFreeModule} for this identification), and
\[ P^{\nabla_M^\dual}_{s,t} = \left(P^\nabla_{t,s}\right)^*: M_s^* \to M_t^*. \]
\end{enumerate}
\end{proposition}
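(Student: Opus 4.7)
The plan is to reduce both parts to the trivial connection by transporting along the parallel isomorphisms furnished by integrability, and then to read off the parallel translation formulas using Corollary~\ref{Corollary-ParallelTranslationFormula}. Fix a parallel isomorphism $F_M: (M, \nabla_M) \to (C^\infty(J,X), \frac{d}{dt})$ and, for part~(i), also a parallel isomorphism $F_N: (N, \nabla_N) \to (C^\infty(J,Y), \frac{d}{dt})$.

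For part~(i), I would take $F := F_M \otimes F_N$, which is a topological $C^\infty(J)$-linear isomorphism from $M \potimes_{C^\infty(J)} N$ onto $C^\infty(J,X) \potimes_{C^\infty(J)} C^\infty(J,Y)$, identified with $C^\infty(J, X \potimes Y)$ via Proposition~\ref{Proposition-TensorProductOfFreeModules}. The essential check is that under this identification the connection $\frac{d}{dt} \otimes 1 + 1 \otimes \frac{d}{dt}$ matches the trivial connection $\frac{d}{dt}$ on $C^\infty(J, X \potimes Y)$; this is just the Leibniz rule $\frac{d}{dt}(f(t) \otimes g(t)) = f'(t) \otimes g(t) + f(t) \otimes g'(t)$, verified on elementary tensors and extended by continuity. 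Since $F_M$ and $F_N$ are parallel, $F$ then intertwines $\nabla_{\potimes}$ with $\frac{d}{dt}$, so $\nabla_{\potimes}$ is integrable, and Corollary~\ref{Corollary-ParallelTranslationFormula} immediately gives
\[ P^{\nabla_{\potimes}}_{s,t} = F_t^{-1} \circ F_s = \bigl((F_M)_t^{-1} \otimes (F_N)_t^{-1}\bigr) \circ \bigl((F_M)_s \otimes (F_N)_s\bigr) = P^{\nabla_M}_{s,t} \otimes P^{\nabla_N}_{s,t}. \]

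For part~(ii), consider the dual map $F_M^\dual: C^\infty(J,X)^\dual \to M^\dual$, which is a topological $C^\infty(J)$-linear isomorphism since dualization of continuous $C^\infty(J)$-linear topological isomorphisms is functorial. The hypothesis on $X$ lets Corollary~\ref{Corollary-DualFreeModule} identify both $C^\infty(J,X)^\dual$ and $M^\dual$ with $C^\infty(J, X^*)$. A direct computation from the definition of $\nabla^\dual$ shows that if $G: (A, \nabla_A) \to (B, \nabla_B)$ is parallel then $G^\dual: (B^\dual, \nabla_B^\dual) \to (A^\dual, \nabla_A^\dual)$ is parallel; applied to $F_M$ this yields that $F_M^\dual$ is parallel. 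A second short check, $\bigl((\tfrac{d}{dt})^\dual \widetilde{\phi}\bigr)(f)(t) = \tfrac{d}{dt}[\phi(t)(f(t))] - \phi(t)(f'(t)) = \phi'(t)(f(t))$, shows that under Corollary~\ref{Corollary-DualFreeModule} the dual connection $(\tfrac{d}{dt})^\dual$ matches $\tfrac{d}{dt}$ on $C^\infty(J,X^*)$. Hence $(F_M^\dual)^{-1}: (M^\dual, \nabla_M^\dual) \to (C^\infty(J,X^*), \tfrac{d}{dt})$ is a parallel isomorphism, proving integrability. On fibers $(F_M^\dual)_t$ identifies with $((F_M)_t)^*$, so Corollary~\ref{Corollary-ParallelTranslationFormula} yields
\[ P^{\nabla_M^\dual}_{s,t} = ((F_M)_t)^* \circ \bigl(((F_M)_s)^*\bigr)^{-1} = \bigl((F_M)_s^{-1} \circ (F_M)_t\bigr)^* = (P^{\nabla_M}_{t,s})^*. \]

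The main technical content is not the formal manipulation but the two compatibility verifications: that the canonical isomorphisms of Proposition~\ref{Proposition-TensorProductOfFreeModules} and Corollary~\ref{Corollary-DualFreeModule} intertwine the natural trivial connections on both sides. Each reduces to the ordinary Leibniz rule and presents no real difficulty, but these checks are what make the reduction-to-trivial-case strategy work. The Banach or nuclear Fréchet hypothesis in part~(ii) is used exactly to invoke Corollary~\ref{Corollary-DualFreeModule}, which is needed even to formulate the parallel transport formula on the fiber $X^*$.
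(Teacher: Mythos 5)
Your proof is correct and takes essentially the same approach as the paper: transporting both structures to the trivial connection via the parallel isomorphisms supplied by integrability, and reading off the parallel translation formulas from Corollary~\ref{Corollary-ParallelTranslationFormula}. You make more explicit the two Leibniz-rule compatibility checks that the paper states tersely, but the strategy and the computations agree.
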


\begin{proof}
Given parallel isomorphisms
\[ F: (M, \nabla_M) \to \left(C^\infty(J,X), \frac{d}{dt}\right), \qquad G: (N, \nabla_N) \to \left(C^\infty(J,Y), \frac{d}{dt}\right), \] we obtain a parallel isomorphism
\begin{align*}F \otimes G: \left( M \potimes_{C^\infty(J)} N, \nabla_{\potimes} \right) &\to \left( C^\infty(J,X) \potimes_{C^\infty(J)} C^\infty(J,Y), \frac{d}{dt} \otimes 1 + 1 \otimes \frac{d}{dt} \right)\\
&\cong \left( C^\infty(J, X \potimes Y), \frac{d}{dt} \right),
\end{align*} which shows $\nabla_{\potimes}$ is integrable.  In a similar way, we obtain a parallel isomorphism
\[ (F^{-1})^\dual: (M^\dual, \nabla_M^\dual) \to \left( C^\infty(J, X^*), \frac{d}{dt} \right) \]
because the dual connection of $\frac{d}{dt}$ on $C^\infty(J,X)$ identifies with $\frac{d}{dt}$ on $C^\infty(J,X^*)$ under the isomorphism of Corollary \ref{Corollary-DualFreeModule}.  The parallel translation formulas follow from Corollary \ref{Corollary-ParallelTranslationFormula}.
\end{proof}

Let us consider the problem of parallel translation for a connection $\nabla$ on $M = C^\infty(J,X)$.  Recall by Proposition~\ref{Proposition-FormOfConnection} that $\nabla = \frac{d}{dt} - F$ for some continuous $C^\infty(J)$-linear map $F: C^\infty(J,X) \to C^\infty(J,X)$.  Let $\{F_t: X \to X\}_{t \in J}$ be the corresponding smooth family of continuous linear maps.  To parallel translate, we must solve the first order linear ODE
\[ x'(t) = F_t(x(t)), \qquad x(s) = x_0 \]
given $s \in J$ and $x_0 \in X$.  By the fundamental theorem of calculus (which is valid for functions with values in $X \in \LCTVS$), any solution satisfies
\[ x(t) = x(s) + \int_s^tx'(u)du = x_0 + \int_s^t F_u(x(u))du. \]  Applying the fundamental theorem inductively, we obtain
\begin{align*}
x(t) &= x_0 + \sum_{n=1}^N \int_{s}^t \int_{s}^{u_1} \ldots \int_{s}^{u_{n-1}} (F_{u_1} \circ \ldots \circ F_{u_n})(x_0) du_n \ldots du_1\\ &\qquad \qquad+ \int_{s}^t \int_{s}^{u_1} \ldots \int_{s}^{u_{N}}(F_{u_1} \circ \ldots \circ F_{u_{N+1}})(x(u_{N+1})) du_{N+1}du_N \ldots du_1. \end{align*} for any $N$.  If the last term can be shown to converge to $0$ in $C^\infty(J,X)$ as $N \to \infty$, then any solution $x(t)$ has the form
\[ x(t) = x_0 + \sum_{n=1}^\infty \int_{s}^t \int_{s}^{u_1} \ldots \int_{s}^{u_{n-1}} (F_{u_1} \circ \ldots \circ F_{u_n})(x_0) du_n \ldots du_1. \]  This gives uniqueness of solutions.  If this series can be shown to converge, we obtain existence of solutions.  It is straightforward to show both of these in the case where $X$ is a Banach space.  The fundamental theorem of calculus ensures that the solution depends smoothly on both $t$ and $s$.  These are well-known results from the theory of first order linear ODE's on a Banach space, which we restate in our language.

\begin{theorem}
If $X$ is a Banach space, then every connection on $C^\infty(J,X)$ is integrable.
\end{theorem}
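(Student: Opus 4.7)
The plan is to use Proposition~\ref{Proposition-FormOfConnection} to reduce to solving a linear ODE on a Banach space, then verify the criteria of Theorem~\ref{Theorem-IntegrableConnections}. Writing $\nabla = \frac{d}{dt} - F$ and invoking Proposition~\ref{Proposition-BanachSpaceSmoothPathsOfHoms}, the endomorphism $F$ corresponds to a smooth curve $t \mapsto F_t$ in the Banach algebra $\Hom(X,X)$. Parallel translation amounts to solving
\[ x'(t) = F_t(x(t)), \qquad x(s) = x_0 \]
for arbitrary $s \in J$ and $x_0 \in X$, and showing that the solutions depend smoothly on all three parameters.

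The first step is global existence and uniqueness. On any compact subinterval $K \subset J$ containing $s$, the quantity $C_K = \sup_{u \in K}\|F_u\|$ is finite by continuity of the curve $F_\bullet$ in operator norm. The Picard iterates $m_0(t) = x_0$, $m_{n+1}(t) = x_0 + \int_s^t F_u(m_n(u))\, du$ satisfy the standard estimate $\|m_{n+1}(t) - m_n(t)\| \le \frac{(C_K|t-s|)^{n+1}}{(n+1)!}\|x_0\|$, so the telescoping series converges uniformly on $K$ to a continuous solution, with uniqueness furnished by Gronwall's inequality. Exhausting $J$ by compact subintervals glues these into a unique global solution $m: J \to X$, which is automatically smooth by bootstrapping from $m'(t) = F_t(m(t))$ and the smoothness of $F_\bullet$.

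The second step is to upgrade this to joint smoothness of the parallel translation operators. Summing the Picard series yields the operator-norm bound $\|P^\nabla_{s,t}\| \le \exp(C_K|t-s|)$, so each $P^\nabla_{s,t}: X \to X$ is continuous. Viewing $s \mapsto P^\nabla_{s,\cdot}$ and $t \mapsto P^\nabla_{\cdot, t}$ as curves into the Banach algebra $\Hom(X,X)$, the operator-valued initial value problems
\[ \tfrac{\partial}{\partial t}P^\nabla_{s,t} = F_t\circ P^\nabla_{s,t}, \qquad \tfrac{\partial}{\partial s}P^\nabla_{s,t} = -P^\nabla_{s,t}\circ F_s, \qquad P^\nabla_{s,s} = 1, \]
are solved by the same Picard argument. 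Iterating these identities expresses every mixed partial derivative as a continuous $\Hom(X,X)$-valued expression in $F_\bullet$ and $P^\nabla$, and the same exponential bound applied to each derivative shows that the map $P^\nabla: X \to C^\infty(J\times J, X)$, $x \mapsto ((s,t) \mapsto P^\nabla_{s,t}(x))$, is well-defined and continuous. Both hypotheses of Theorem~\ref{Theorem-IntegrableConnections} are then verified, and integrability of $\nabla$ follows.

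The only substantive concern is ensuring that the Picard series converges globally across the (possibly noncompact) interval $J$ and that its termwise differentiation remains controlled. Linearity of the ODE is what kills this obstacle: on each compact subinterval the growth of solutions and of all their derivatives is at most exponential, so no finite-time blow-up occurs and the usual compact-exhaustion argument glues everything together. I would expect the bulk of the writing to be spent organizing these Picard estimates and their derivative analogues, rather than surmounting any conceptual difficulty.
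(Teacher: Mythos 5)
Your proposal is correct and takes essentially the same route as the paper: writing $\nabla = \frac{d}{dt} - F$, reducing to a time-dependent linear ODE on the Banach space $X$, producing the solution via the Picard/iterated-integral series, and deducing the exponential operator-norm bound and joint smoothness in $(s,t)$ needed for Theorem~\ref{Theorem-IntegrableConnections}. The paper treats this as standard linear ODE theory on a Banach space and only sketches the iterated integrals; you have merely filled in the Picard estimates, the Gronwall uniqueness, and the operator-valued bootstrapping that the paper leaves to the reader.
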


Notice that if $\nabla$ has constant coefficients, i.e. $F_t$ doesn't depend on $t$, then the solution takes the well-known form
\[ x(t) = \sum_{n=0}^\infty \frac{(t-s)^n}{n!}F^n(x_0) = \exp((t-s)F)(x_0). \]

Once we start considering other classes of locally convex vector spaces, e.g. Fr\'{e}chet spaces, the existence and uniqueness theorem for solutions to linear ODE's is false.  One cannot guarantee that the above series defining the solution will converge.

Another situation in which we can get control of this series is when $F$ has nilpotence properties.  We'll call $F$ \emph{nilpotent (with respect to $\frac{d}{dt}$)} if there is an integer $n$ such that
\[ F_{u_1} \circ \ldots \circ F_{u_n} = 0, \qquad \forall u_1, \ldots , u_n \in J. \]  In this case, the above series becomes a finite sum, and we see that the connection $\nabla = \frac{d}{dt} - F$ is integrable.

Let's generalize the above discussion to perturbations $\nabla - F$ of an integrable connection $\nabla$.

\begin{theorem}[Fundamental theorem of calculus]
If $\nabla$ is an integrable connection on $C^\infty(J,X)$, then
\[ x(t) = P_{s,t}^\nabla(x(s)) + \int_s^t P_{u,t}^\nabla\left( (\nabla x)(u) \right)du \] for any $x \in C^\infty(J,X)$ and $s,t \in J$.
\end{theorem}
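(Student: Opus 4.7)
The plan is to reduce to the trivial connection via integrability and then invoke the ordinary vector-valued fundamental theorem of calculus. Since $\nabla$ is integrable, by definition there exists a parallel isomorphism
\[ F: (M, \nabla) \to \left(C^\infty(J,X), \frac{d}{dt}\right), \]
and by Corollary~\ref{Corollary-ParallelTranslationFormula} the parallel transport operators factor as $P_{u,t}^\nabla = F_t^{-1} \circ F_u$ for all $u, t \in J$, where $\{F_t : X \to X\}_{t \in J}$ is the smooth family underlying $F$.

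Given $x \in M = C^\infty(J,X)$, I will set $y = F(x) \in C^\infty(J,X)$ and apply the classical vector-valued fundamental theorem of calculus (valid in any $X \in \LCTVS$, as invoked earlier in this section) to obtain
\[ y(t) = y(s) + \int_s^t y'(u)\,du. \]
Because $F$ is parallel, $\frac{d}{dt} \circ F = F \circ \nabla$, which evaluated pointwise gives $y'(u) = F_u\bigl((\nabla x)(u)\bigr)$. Applying the continuous linear map $F_t^{-1}: X \to X$ to both sides and commuting it with the integral then produces
\[ x(t) = F_t^{-1}\bigl(F_s(x(s))\bigr) + \int_s^t F_t^{-1}\bigl( F_u((\nabla x)(u)) \bigr)\,du, \]
and substituting the factorization $F_t^{-1} \circ F_u = P_{u,t}^\nabla$ into both summands yields exactly the claimed formula.

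The only step that is not purely formal is the commutation of $F_t^{-1}$ with the vector-valued integral, which I expect to be the main (and only) technical point. This is legitimate because for each fixed $t$ the operator $F_t^{-1}$ is a continuous linear endomorphism of $X$, the integrand $u \mapsto F_u((\nabla x)(u))$ is a continuous $X$-valued function on the compact interval between $s$ and $t$, and continuous linear maps commute with the Riemann integral of continuous functions valued in a complete locally convex space. Once this interchange is justified, the entire argument is nothing more than transporting the FTC for $\frac{d}{dt}$ through the parallel isomorphism $F$.
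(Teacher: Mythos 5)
Your proof is correct, but it takes a genuinely different route from the paper's. You use the defining parallel isomorphism $F:(M,\nabla)\to(C^\infty(J,X),\tfrac{d}{dt})$, push $x$ forward to $y=F(x)$, apply the classical vector-valued fundamental theorem of calculus to $y$, and pull back via $F_t^{-1}$, using the factorization $P^\nabla_{u,t}=F_t^{-1}\circ F_u$ from Corollary~\ref{Corollary-ParallelTranslationFormula}. The paper instead argues intrinsically: fixing $s$ and viewing both sides as functions of $t$, it applies $\nabla$ to the right-hand side, uses that $t\mapsto P^\nabla_{u,t}(y)$ is $\nabla$-parallel for each fixed $u$ (so that differentiating through the integral contributes only the boundary term $(\nabla x)(t)$), concludes both sides have the same $\nabla$-derivative, and then invokes uniqueness of parallel sections to conclude the difference is zero. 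The intrinsic argument avoids ever choosing a trivialization $F$ and does not need the commutation of $F_t^{-1}$ with the integral, relying only on condition (i) of Theorem~\ref{Theorem-IntegrableConnections}; your trivialization argument is more concrete but leans on Corollary~\ref{Corollary-ParallelTranslationFormula} and the interchange lemma, which, as you note, is standard but must be cited. One small point you glossed over: you should justify that the pointwise inverses $\{F_t^{-1}\}$ exist, are continuous, and constitute the smooth family underlying $F^{-1}$; this follows from surjectivity of the evaluations $\epsilon_t$ applied to $F\circ F^{-1}=\id$ and $F^{-1}\circ F=\id$, but it is worth stating since you use $F_t^{-1}$ explicitly.
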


\begin{proof}
Fix $s \in J$ and view everything as a function of $t$ (so that $\nabla$ differentiates with respect to $t$).  Using the fact that $(\nabla \circ P_{u,t})(y(u)) = 0$ for any $u$ or $y$, we see that applying $\nabla$ to the right hand side gives $(\nabla x)(t)$.  Thus the two sides differ by a $\nabla$-parallel section, which must be $0$ because the two sides are equal when $t = s$.
\end{proof}

By repeatedly applying this fundamental theorem of calculus, we see that solutions to
\[ (\nabla - F)x = 0, \qquad x(s) = x_0 \]
take the form
\begin{align*} x(t) &= P_{s,t}^\nabla(x_0) + \sum_{n=1}^\infty \int_s^t \int_s^{u_1} \ldots \int_s^{u_{n-1}}\\ &\qquad(P_{u_1,t}^\nabla \circ F_{u_1} \circ P_{u_2,u_1}^\nabla \circ F_{u_2} \circ \ldots \circ P_{u_n,u_{n-1}}^\nabla \circ F_{u_n} \circ P_{s,u_{n}}^\nabla)(x_0)du_n \ldots du_1,
\end{align*} provided the series converges.
So we shall say that $F$ is \emph{nilpotent (with respect to $\nabla$)} if there is an integer $n$ such that
\[ F_{u_1} \circ P_{u_2,u_1}^\nabla \circ F_{u_2} \circ \ldots \circ P_{u_n,u_{n-1}}^\nabla \circ F_{u_n} = 0, \qquad \forall u_1, \ldots u_n \in J. \]  Thus a nilpotent perturbation $\nabla - F$ of an integrable connection is integrable.  We are interested in a special case of this.  We record it here, though it shall be used in a subsequent paper.

\begin{proposition}\label{Proposition-NilpotentPerturbation}
Suppose $\nabla$ is an integrable connection on $C^\infty(J,X)$, and $F$ is a $C^\infty(J)$-linear endomorphism of $C^\infty(J,X)$ such that $[\nabla, F] = 0$ and $F^N = 0$ for some integer $N$.  Then $\nabla - F$ is integrable and
\[ P_{s,t}^{\nabla - F} = \sum_{n=0}^{N-1} \frac{(t-s)^n}{n!}F_t^n \circ P_{s,t}^\nabla. \]
\end{proposition}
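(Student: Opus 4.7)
The plan is to exhibit the right-hand side as the parallel transport for $\nabla - F$ and then invoke Theorem~\ref{Theorem-IntegrableConnections}. Write $Q_{s,t} := \sum_{n=0}^{N-1} \tfrac{(t-s)^n}{n!}\, F_t^n \circ P^\nabla_{s,t}$.

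First I would extract the key commutation relation. Since $[\nabla, F] = 0$ and $F$ is $C^\infty(J)$-linear, $F$ is a parallel endomorphism of $(C^\infty(J,X), \nabla)$, so Proposition~\ref{Proposition-ParallelFamilyCommutesWithParallelTransport} yields
\[ F_t \circ P^\nabla_{s,t} = P^\nabla_{s,t} \circ F_s, \qquad F_t^n \circ P^\nabla_{s,t} = P^\nabla_{s,t} \circ F_s^n. \]
Consequently, for every $x_0 \in X$ and every $n \geq 0$, the section $t \mapsto F_t^n P^\nabla_{s,t}(x_0)$ is $\nabla$-parallel.

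Next I would verify directly that $y(t) := Q_{s,t}(x_0)$ solves the initial value problem $(\nabla - F)y = 0$, $y(s) = x_0$. The initial condition is immediate, since only the $n = 0$ term contributes at $t = s$. For the equation, apply $\nabla$ termwise: by the Leibniz rule for scalar functions together with the $\nabla$-parallelism above, only the derivatives of the scalar factors $\tfrac{(t-s)^n}{n!}$ contribute, giving $\nabla y = \sum_{n=1}^{N-1} \tfrac{(t-s)^{n-1}}{(n-1)!} F_t^n P^\nabla_{s,t}(x_0)$; expanding $Fy$ and reindexing $m = n+1$ produces the same sum, provided the top term indexed by $m = N$ is killed, which is exactly $F^N = 0$. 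For uniqueness, I would invoke the iterated-integral formula for solutions derived in the paragraph preceding the statement: the commutation identity and the composition law $P^\nabla_{u_1,t}\circ P^\nabla_{u_2,u_1}\circ\cdots\circ P^\nabla_{s,u_n} = P^\nabla_{s,t}$ collapse the $n$-fold integrand to the single operator $P^\nabla_{s,t} \circ F_s^n$, independent of the integration variables, so the integral reduces to $(t-s)^n/n!$ and the series terminates at $n = N - 1$ by nilpotence.

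Finally, integrability follows from Theorem~\ref{Theorem-IntegrableConnections}: the map $P^\nabla : X \to C^\infty(J \times J, X)$ is well-defined and continuous by hypothesis on $\nabla$, each $F^n$ arises from a smooth family of continuous linear maps (compositions of such families are again such), and the scalar factors $(t-s)^n/n!$ are smooth in both parameters, so the two-parameter family $Q_{s,t}$ yields a continuous linear map $X \to C^\infty(J \times J, X)$. I do not foresee a substantive obstacle; the only care needed is in the termwise Leibniz computation, where the $\nabla$-parallel pieces absorb all the action of $\nabla$ except the derivative of the polynomial coefficient, and in verifying the telescoping via the commutation identity in the uniqueness step.
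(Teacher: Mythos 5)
Your proof is correct and follows essentially the same route as the paper's. Both hinge on the commutation relation $F_t^n \circ P^\nabla_{s,t} = P^\nabla_{s,t} \circ F_s^n$ coming from Proposition~\ref{Proposition-ParallelFamilyCommutesWithParallelTransport}, and both use the collapse of the iterated-integral integrand to $F_t^n \circ P^\nabla_{s,t}$ to conclude both nilpotence (hence integrability) and the explicit formula; your additional direct termwise Leibniz verification that $Q_{s,t}(x_0)$ solves the initial value problem is a clean, self-contained confirmation of existence, but it is not a structurally different argument from the paper's.
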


\begin{proof}
By assumption, $F$ is parallel with respect to $\nabla$.  Using Proposition~\ref{Proposition-ParallelFamilyCommutesWithParallelTransport}, we have
\[ P_{u_1,t}^\nabla \circ F_{u_1} \circ P_{u_2,u_1}^\nabla \circ F_{u_2} \circ \ldots \circ P_{u_n,u_{n-1}}^\nabla \circ F_{u_n} \circ P_{s,u_{n}}^\nabla = F_t^n \circ P_{s,t}^\nabla. \]
It follows that $F$ is nilpotent with respect to $\nabla$, so $\nabla - F$ is integrable.  From the explicit series solution, we see
\begin{align*} P_{s,t}^{\nabla-F}(x_0) &= P_{s,t}^\nabla(x_0) + \sum_{n=1}^{N-1} \int_s^t \int_s^{u_1} \ldots \int_s^{u_{n-1}} F_t^n(P_{s,t}^\nabla(x_0))du_n\ldots du_1\\
&= \sum_{n=0}^{N-1} \frac{(t-s)^n}{n!}F_t^n(P_{s,t}^\nabla(x_0)).
\end{align*}
\end{proof}
In other words, if $x$ is the $\nabla$-parallel section through $x_0$ over $s \in J$, then $\exp((t-s)F)(x)$ is the $(\nabla - F)$-parallel section through $x_0$.

\section{Smooth deformations}

\subsection{Deformations of algebras}

Let $X \in \LCTVS$ and let $J$ denote an open interval of real numbers.

\begin{definition}
A \emph{smooth one-parameter deformation of algebras} is a smooth family of continuous linear maps $\{ m_t: X \potimes X \to X \}_{t \in J}$ for which each $m_t$ is associative.
\end{definition}
So for each $t \in J$, we have a locally convex algebra $A_t := (X, m_t)$ whose underlying space is $X$.  
Consider the continuous $C^\infty(J)$-linear map
\[ m: C^\infty(J, X \potimes X) \to C^\infty(J,X) \] associated to the smooth family $\{m_t\}_{t \in J}$.  Letting $A_J = C^\infty(J,X)$, then $m$ can be viewed as a map
\[ m: A_J \potimes_{C^\infty(J)} A_J \to A_J \]
using Proposition~\ref{Proposition-TensorProductOfFreeModules}.  Associativity of $m$ follows from associativity of the family $\{m_t\}$.  Thus $A_J$ is a locally convex $C^\infty(J)$-algebra, which we shall refer to as the \emph{algebra of sections} of the deformation $\{A_t\}_{t \in J}$.  Explicitly, the multiplication in $A_J$ is given by
\[ (a_1 a_2)(t) = m_t(a_1(t), a_2(t)) \]  for all $a_1, a_2 \in A_J$.  Note that the evaluation maps $\epsilon_t: A_J \to A_t$ are algebra homomorphisms.

\begin{proposition}
Associating to a deformation its algebra of sections gives a one-to-one correspondence between smooth one-parameter deformations over $J$ with underlying space $X$ and locally convex $C^\infty(J)$-algebra structures on $C^\infty(J,X)$.
\end{proposition}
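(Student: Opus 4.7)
The plan is to set up the bijection via the identification of continuous $C^\infty(J)$-linear maps between free modules with smooth families of continuous linear maps, together with Proposition~\ref{Proposition-TensorProductOfFreeModules} which lets us rewrite $A_J \potimes_{C^\infty(J)} A_J$ as $C^\infty(J, X \potimes X)$.

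First I would make the forward map precise: given a smooth deformation $\{m_t: X\potimes X\to X\}_{t\in J}$ of associative multiplications, the discussion after Definition~\ref{Definition-SmoothFamilyOfLinearMaps} produces a continuous $C^\infty(J)$-linear map $\widetilde{m}:C^\infty(J,X\potimes X)\to C^\infty(J,X)$ with $m_t=\epsilon_t\circ\widetilde{m}$. Composing with the isomorphism $A_J\potimes_{C^\infty(J)}A_J\cong C^\infty(J,X\potimes X)$ of Proposition~\ref{Proposition-TensorProductOfFreeModules} yields a continuous $C^\infty(J)$-linear multiplication $m:A_J\potimes_{C^\infty(J)}A_J\to A_J$, and associativity in $A_J$ is the pointwise statement $m_t(m_t(a,b),c)=m_t(a,m_t(b,c))$, which follows from associativity of each $m_t$ applied to evaluated sections.

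For the inverse map, I would start with an arbitrary locally convex $C^\infty(J)$-algebra structure on $A_J=C^\infty(J,X)$, given by a continuous $C^\infty(J)$-linear multiplication $m:A_J\potimes_{C^\infty(J)}A_J\to A_J$. Using Proposition~\ref{Proposition-TensorProductOfFreeModules} in the reverse direction presents $m$ as a continuous $C^\infty(J)$-linear map $C^\infty(J,X\potimes X)\to C^\infty(J,X)$ between free modules. By the converse half of the correspondence cited after Definition~\ref{Definition-SmoothFamilyOfLinearMaps}, every such map is induced by a (unique) smooth family, so we recover continuous linear maps $m_t:X\potimes X\to X$ depending smoothly on $t$. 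Associativity of each $m_t$ is then extracted by evaluating the identity $m\circ(m\otimes 1)=m\circ(1\otimes m)$ on constant sections $1\otimes x,\ 1\otimes y,\ 1\otimes z$ for $x,y,z\in X$ and noting that $\epsilon_t(1\otimes x)=x$.

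Finally I would check that the two assignments are mutually inverse. Both directions amount to the identity $m_t=\epsilon_t\circ\widetilde m$ together with uniqueness of the map $\widetilde m$ promised by the universal property of the free module $C^\infty(J)\potimes(X\potimes X)$: starting from a smooth family and passing to $m$ then back recovers the same $\widetilde m$ (hence the same $m_t$'s), and starting from a $C^\infty(J)$-algebra structure $m$ and passing to $\{m_t\}$ then back reproduces $m$ by $C^\infty(J)$-linearity and density of constant-section tensors. There is no real obstacle here; the only thing that requires care is keeping track of the identification in Proposition~\ref{Proposition-TensorProductOfFreeModules} so that ``$C^\infty(J)$-linearity of the multiplication on $A_J$'' and ``smooth $t$-dependence of the bilinear multiplications $m_t$ on $X$'' are translated into one another correctly.
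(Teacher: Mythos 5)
Your proposal is correct and follows the same line of reasoning the paper relies on: the paper itself does not write out a formal proof but sets up the forward direction just before the proposition, using the identification of $A_J \potimes_{C^\infty(J)} A_J$ with $C^\infty(J, X\potimes X)$ from Proposition~\ref{Proposition-TensorProductOfFreeModules} and the correspondence between smooth families and continuous $C^\infty(J)$-linear maps between free modules stated after Definition~\ref{Definition-SmoothFamilyOfLinearMaps}. Your write-up adds the inverse direction and the check that the two assignments are mutually inverse, which are exactly the missing details; these are handled correctly, in particular the extraction of associativity of each $m_t$ by evaluating the $C^\infty(J)$-linear associativity identity on constant sections.
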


If $X$ is Fr\'{e}chet, then our definition of a smooth deformation is equivalent to a smooth path in $\Hom(X \potimes X, X)$ whose image lies in the set of associative products, by Proposition \ref{Proposition-SmoothPathsInHom}.  The following is a useful criterion for checking that the deformation is smooth in this case.

\begin{proposition} \label{Proposition-DeformationsOnFrechetSpace}
If $X$ is a Fr\'{e}chet space, then a set of continuous associative multiplications $\{m_t: X \potimes X \to X\}_{t \in J}$ is a smooth one-parameter deformation if and only if the map
\[ t \mapsto m_t(x_1, x_2) \] is smooth for each fixed $x_1, x_2 \in X$.
\end{proposition}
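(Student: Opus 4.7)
The forward direction will be immediate. If $\{m_t\}_{t\in J}$ is a smooth family, then the associated continuous $C^\infty(J)$-linear map $\widetilde{m}: X \potimes X \to C^\infty(J,X)$ satisfies $\epsilon_t \circ \widetilde{m} = m_t$; evaluating at an elementary tensor $x_1 \otimes x_2$ shows $t \mapsto m_t(x_1,x_2) = \widetilde{m}(x_1\otimes x_2)(t)$ is smooth.

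For the converse, the plan is to promote the pointwise smoothness hypothesis to a continuous linear map $\widetilde{m}: X \potimes X \to C^\infty(J,X)$ by first constructing a separately continuous bilinear map on $X \times X$ and then invoking the fact that $X$ is Fr\'echet. Specifically, fix $x_1 \in X$ and consider the family $\{m_t(x_1,\cdot): X \to X\}_{t \in J}$ of continuous linear maps. The hypothesis says $t \mapsto m_t(x_1,x_2)$ is smooth for every $x_2$, and $X$ is barreled, so Proposition~\ref{Proposition-SmoothFamilyForBarreledX} yields a continuous linear map $\widetilde{m}_{x_1}: X \to C^\infty(J,X)$, $\widetilde{m}_{x_1}(x_2)(t) = m_t(x_1,x_2)$. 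Applying the same argument with the roles of $x_1$ and $x_2$ reversed gives that the bilinear map $\widetilde{m}: X \times X \to C^\infty(J,X)$ defined by $(x_1,x_2) \mapsto (t \mapsto m_t(x_1,x_2))$ is continuous in each variable separately.

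The key step is then to pass from separate to joint continuity. Since $X$ is Fr\'echet, any separately continuous bilinear map on $X \times X$ into a locally convex space is jointly continuous (a standard Banach-Steinhaus consequence). Thus $\widetilde{m}$ is jointly continuous and extends, by the universal property of $\potimes$, to a continuous linear map $\widetilde{m}: X \potimes X \to C^\infty(J,X)$. By construction $\epsilon_t \circ \widetilde{m}$ agrees with $m_t$ on elementary tensors, and hence on all of $X \potimes X$ by density and continuity. This realizes $\{m_t\}$ as a smooth family in the sense of Definition~\ref{Definition-SmoothFamilyOfLinearMaps}.

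The only real obstacle is the separate-to-joint continuity passage, and that is handled entirely by the Fr\'echet hypothesis on $X$; every other step is a direct application of Proposition~\ref{Proposition-SmoothFamilyForBarreledX} together with the universal property of the projective tensor product.
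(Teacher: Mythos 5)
Your proof is correct and follows the same route as the paper's: show that $(x_1,x_2)\mapsto (t\mapsto m_t(x_1,x_2))$ is separately continuous via Proposition~\ref{Proposition-SmoothFamilyForBarreledX}, upgrade to joint continuity using the Fr\'echet hypothesis, and then pass to the completed projective tensor product. You spell out the application of Proposition~\ref{Proposition-SmoothFamilyForBarreledX} in each variable slightly more explicitly than the paper does, but the argument is the same.
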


\begin{proof}
If $\{m_t\}_{t \in J}$ is a smooth one-parameter deformation, then it is immediate that $t \mapsto m_t(x_1, x_2)$ is smooth for all $x_1, x_2 \in X$.

Conversely, if $t \mapsto m_t(x_1, x_2)$ is smooth for each fixed $x_1, x_2 \in X$, then the map
\[ m: X \times X \to C^\infty(J,X) \] given by
\[ m(x_1, x_2)(t) = m_t(x_1, x_2) \]
is separately continuous by Proposition~\ref{Proposition-SmoothFamilyForBarreledX}.  Since $X$ is Fr\'{e}chet, it follows that $m$ is jointly continuous and so induces a continuous linear map
\[ m: X \potimes X \to C^\infty(J,X). \]  This shows that $\{m_t\}_{t \in J}$ is a smooth family of continuous linear maps.
\end{proof}


\begin{definition}
A \emph{morphism} between the deformations $\{A_t\}_{t \in J}$ and $\{B_t\}_{t\in J}$ is a continuous $C^\infty(J)$-linear algebra homomorphism $F: A_J \to B_J$.
\end{definition}

Thus a morphism is equivalent to a family $\{F_t: A_t \to B_t\}_{t \in J}$ of continuous algebra homomorphisms which vary smoothly in the sense of Definition~\ref{Definition-SmoothFamilyOfLinearMaps}.  When $X$ is Fr\'{e}chet, the smoothness can be checked using Proposition~\ref{Proposition-SmoothFamilyForBarreledX}.

A deformation is called \emph{constant} if the products $\{m_t\}$ do not depend on $t$.  A deformation is called \emph{trivial} if it is isomorphic to a constant deformation.  Thus $\{A_t\}_{t \in J}$ is trivial if and only if there is a locally convex algebra $B$ such that $A_J \cong C^\infty(J,B)$ as algebras.  We can characterize triviality of a smooth deformation of algebras in terms of connections.

\begin{proposition} \label{Proposition-TrivialDeformationOfAlgebras}
The deformation $\{A_t\}_{t \in J}$ is trivial if and only if $A_J$ admits an integrable connection that is a derivation with respect to the algebra structure.  In this case, the parallel translation maps $P_{s,t}^\nabla: A_s \to A_t$ are isomorphisms of locally convex algebras.
\end{proposition}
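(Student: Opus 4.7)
The plan is to prove the two implications separately, using parallel translation to construct the algebra isomorphism explicitly. The forward direction is immediate: if $A_J \cong C^\infty(J,B)$ as locally convex $C^\infty(J)$-algebras for some locally convex algebra $B$, then $\tfrac{d}{dt}$ is a connection on $C^\infty(J,B)$ which is patently integrable (take the identity as the parallel isomorphism), and it is a derivation for the pointwise product since $(ab)'(t) = a'(t)b(t) + a(t)b'(t)$, with both factors multiplied by the same $t$-independent operation on $B$. Transporting $\tfrac{d}{dt}$ across the algebra isomorphism then yields an integrable derivation connection on $A_J$.

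For the reverse direction, assume $\nabla$ is an integrable connection on $A_J$ satisfying the Leibniz rule. The key observation is that the product of two $\nabla$-parallel sections is itself $\nabla$-parallel: if $\nabla a_1 = \nabla a_2 = 0$, the derivation property gives $\nabla(a_1 a_2) = (\nabla a_1) a_2 + a_1 (\nabla a_2) = 0$. Fix $s \in J$, and for each $x \in A_s$ let $\widetilde{x}$ denote the unique $\nabla$-parallel section with $\widetilde{x}(s) = x$, so that $P^\nabla_{s,t}(x) = \widetilde{x}(t)$. Then $\widetilde{x}\,\widetilde{y}$ is $\nabla$-parallel and takes the value $m_s(x,y)$ at $s$, so uniqueness forces $\widetilde{x}\,\widetilde{y} = \widetilde{m_s(x,y)}$. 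Evaluating at $t \in J$ yields the compatibility
\[ m_t\bigl(P^\nabla_{s,t}(x), P^\nabla_{s,t}(y)\bigr) = P^\nabla_{s,t}\bigl(m_s(x,y)\bigr), \]
so each $P^\nabla_{s,t}: A_s \to A_t$ is an algebra homomorphism; since Theorem~\ref{Theorem-IntegrableConnections} guarantees it is also a topological isomorphism of the underlying locally convex spaces, this proves the second assertion of the proposition.

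To conclude triviality, set $B = A_s$ with its multiplication $m_s$ and equip $C^\infty(J,B)$ with the corresponding constant deformation structure $(b_1 b_2)(t) = m_s(b_1(t), b_2(t))$. The map $\psi: C^\infty(J,B) \to A_J$ defined on sections by $\psi(b)(t) = P^\nabla_{s,t}(b(t))$ is, by the proof of Theorem~\ref{Theorem-IntegrableConnections}, a parallel topological isomorphism of locally convex $C^\infty(J)$-modules, and the displayed identity above shows at once that $\psi$ intertwines the two multiplications. Hence $A_J \cong C^\infty(J,B)$ as locally convex $C^\infty(J)$-algebras, so $\{A_t\}_{t \in J}$ is trivial. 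The main obstacle is really the single lemma that $\nabla$-parallel sections form a subalgebra of $A_J$; once this is extracted from the Leibniz rule together with the uniqueness half of integrability, everything else is routine bookkeeping built on the module-level triviality already supplied by Theorem~\ref{Theorem-IntegrableConnections}.
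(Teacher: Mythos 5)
Your proof is correct, and the forward direction is essentially identical to the paper's. The reverse direction, though, takes a genuinely different route. The paper packages the derivation property as the assertion that the multiplication map $m: \bigl(A_J \potimes_{C^\infty(J)} A_J,\ \nabla \otimes 1 + 1 \otimes \nabla\bigr) \to (A_J, \nabla)$ is a parallel map, and then invokes Proposition~\ref{Proposition-ParallelFamilyCommutesWithParallelTransport} together with the formula $P^{\nabla\otimes 1 + 1\otimes\nabla}_{s,t} = P^\nabla_{s,t} \otimes P^\nabla_{s,t}$ from Proposition~\ref{Proposition-IntegrableTensorAndDual} to conclude that $P^\nabla_{s,t}$ is multiplicative. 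You instead prove the one-line lemma that $\nabla$-parallel sections form a subalgebra (the Leibniz rule gives $\nabla(\widetilde{x}\widetilde{y}) = 0$) and then use the uniqueness half of integrability to force $\widetilde{x}\,\widetilde{y} = \widetilde{m_s(x,y)}$; evaluating at $t$ gives multiplicativity directly. Both arguments ultimately unwind to the same computation, but your version avoids introducing the tensor product connection and its integrability entirely, making the mechanism more transparent, while the paper's phrasing highlights how the result slots into the general framework of parallel maps. Either is a clean proof; yours trades generality for directness.
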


\begin{proof}
Notice that $\frac{d}{dt}$ is an integrable connection and a derivation on a constant deformation.  If $\{A_t\}_{t \in J}$ is trivial and $F: A_J \to B_J$ is a $C^\infty(J)$-linear algebra isomorphism with the algebra of sections of a constant deformation, then $\nabla = F^{-1}\frac{d}{dt}F$ is a connection and a derivation on $A_J$, and $\nabla$ is integrable because
\[ F: (A_J, \nabla) \to \left(B_J, \frac{d}{dt}\right) \] is a parallel isomorphism.

Conversely, suppose $A_J$ has an integrable connection $\nabla$ that is a derivation.  That $\nabla$ is a derivation is equivalent to the multiplication map
\[ m: ( A_J \potimes_{C^\infty(J)} A_J, \nabla \otimes 1 + 1 \otimes \nabla) \to (A_J, \nabla) \] being a parallel map.  Combining Proposition \ref{Proposition-ParallelFamilyCommutesWithParallelTransport} with Corollary \ref{Corollary-ParallelTranslationFormula}, we see that each $P^\nabla_{s,t}: A_s \to A_t$ is an algebra isomorphism.
Fixing an $s \in J$, it follows that 
\[ \{P^\nabla_{s,t}: A_s \to A_t\}_{t \in J} \]
is an isomorphism between the constant deformation with fiber $A_s$ and $\{A_t\}_{t \in J}$.
\end{proof}

%

Thus it is important to determine if a deformation has a connection that is a derivation.  In analogy with the work of Gerstenhaber on formal deformations \cite{MR0171807}, the obstruction to this is cohomological.

Given any connection $\nabla$ on the algebra of sections $A_J$, define the bilinear map $E$ by
\[ \nabla(a_1 a_2) = \nabla(a_1)a_2 + a_1\nabla(a_2) - E(a_1, a_2). \]
So $E$ is the defect of $\nabla$ from being a derivation, and in fact $E = \delta \nabla$, where $\delta$ is the Hochschild coboundary.  It follows that $\delta E = 0$.  Using the Leibniz rule for $\nabla$, one can check that $E$ is a $C^\infty(J)$-bilinear map.  So $E$ defines a cohomology class $[E] \in H^2_{C^\infty(J)}(A_J,A_J)$.  Notice that $\nabla$ is only $\C$-linear and not $C^\infty(J)$-linear.  Thus, we may have $[E] \neq 0$.

\begin{proposition} \label{Proposition-ClassOfE}
The cohomology class $[E] \in H^2_{C^\infty(J)}(A_J,A_J)$ is independent of the choice of connection.  Moreover, $[E] = 0$ if and only if $A_J$ possesses a connection that is a derivation.
\end{proposition}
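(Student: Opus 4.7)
The plan is to exploit the fact that, although a connection $\nabla$ on $A_J$ is only $\C$-linear, the difference of two connections is forced by the Leibniz rule to be $C^\infty(J)$-linear. Consequently, two choices of connection produce Hochschild $2$-cocycles that differ by a coboundary, and the vanishing of $[E]$ provides a correction term that turns any given $\nabla$ into a derivation.

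First I would confirm the claim made in the paragraph preceding the proposition, namely that $E = \delta \nabla$ is $C^\infty(J)$-bilinear. Using the Leibniz rule for $\nabla$ together with the identity $(f \cdot a_1)a_2 = f\cdot (a_1a_2) = a_1(f\cdot a_2)$, which holds because the multiplication of $A_J$ is $C^\infty(J)$-linear, one checks that the $f'$-terms produced by $\nabla(f\cdot a_1)$ and $\nabla(f\cdot a_1 a_2)$ cancel pairwise, leaving $E(f\cdot a_1, a_2) = f\cdot E(a_1, a_2)$; the argument in the second variable is symmetric. Combined with continuity and $\delta E = \delta^2 \nabla = 0$, this yields a well-defined class $[E] \in H^2_{C^\infty(J)}(A_J, A_J)$.

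For the independence statement, let $\nabla_1, \nabla_2$ be two connections on $A_J$. Their difference $D := \nabla_1 - \nabla_2$ satisfies $D(f\cdot a) = f\cdot D(a)$, since the Leibniz $f'$-terms cancel, so $D$ is a continuous $C^\infty(J)$-linear endomorphism of $A_J$, i.e.\ an element of $C^1_{C^\infty(J)}(A_J, A_J)$. Linearity of $\delta$ then yields
\[ E_1 - E_2 = \delta \nabla_1 - \delta \nabla_2 = \delta D, \]
so $[E_1] = [E_2]$ in $H^2_{C^\infty(J)}(A_J,A_J)$.

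For the final equivalence, the forward direction is immediate: if $\nabla$ is itself a derivation, then $E = \delta \nabla = 0$, so $[E] = 0$. For the converse, fix any connection $\nabla$; then $[E] = 0$ means there is some $D \in C^1_{C^\infty(J)}(A_J, A_J)$ with $E = \delta D$. Because $D$ is $C^\infty(J)$-linear, the operator $\nabla' := \nabla - D$ is again a connection (its Leibniz rule is unchanged by subtracting a $C^\infty(J)$-linear map), and
\[ \delta \nabla' = \delta \nabla - \delta D = E - E = 0, \]
so $\nabla'$ is a derivation. I expect no serious obstacle here; the only point requiring care is to work consistently with the continuous $C^\infty(J)$-linear Hochschild complex, so that the representing cochain $D$ automatically has the regularity needed for $\nabla - D$ to remain a connection in the sense of the paper.
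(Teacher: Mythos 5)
Your proof is correct and follows essentially the same route as the paper: note that two connections differ by a $C^\infty(J)$-linear map, conclude the cocycles differ by a coboundary, and use an exact primitive of $E$ to correct $\nabla$ into a derivation. The extra verification that $E$ is $C^\infty(J)$-bilinear is appropriate diligence but is already asserted in the paper's preceding paragraph rather than inside the proof.
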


\begin{proof}
Let $\nabla$ and $\nabla'$ be two connections with corresponding cocycles $E$ and $E'$.  Since $\nabla' = \nabla - F$ for some $F \in C^1_{C^\infty(J)}(A_J,A_J)$, we have
\[ E' = \delta(\nabla - F) = E - \delta F, \] which shows that $[E] = [E'].$

If $\nabla$ is a connection that is a derivation, then $E = \delta \nabla = 0.$  Conversely, if $\nabla$ is any connection on $A_J$ and $[E] = 0$, then $E = \delta F$ for some $F \in C^1_{C^\infty(J)}(A_J,A_J)$.  Hence $\delta (\nabla - F) = 0$, which shows that $\nabla - F$ is a connection that is a derivation.
\end{proof}

From this, we see that the cohomology class $[E]$ provides an obstruction to the triviality of a deformation.  Even if this obstruction vanishes, there is still an analytic obstruction in that the corresponding connection may not be integrable.  These two issues are common to the smooth deformation theory of other types of structures as well, e.g. cochain complexes (see below) or $A_\infty$-algebras \cite{Yashinski-Thesis}.

\subsection{Deformations of cochain complexes}

By a \emph{smooth one-parameter deformation of cochain complexes}, we mean a collection $\{X^n\}_{n \in \Z}$ of spaces in $\LCTVS$ together with a smooth family of continuous linear maps $\{ d^n_t: X^n \to X^{n+1}\}_{t \in J}$ for each $n$ such that $d^{n+1}_t \circ d^n_t = 0$ for all $t \in J$.  (By turning the arrows around, we could just as well talk about deformations of chain complexes.)  For each $t \in J$, we have a locally convex cochain complex
\[ (\mathcal{C}^\bullet_t, d_t) := \left(
\xymatrix{
\ldots \ar[r]^-{d_t} & X^{n-1} \ar[r]^-{d_t} & X^n \ar[r]^-{d_t} & X^{n+1} \ar[r]^-{d_t} &\ldots
}
\right) \]
built on the same underlying family of spaces.
Let $\mathcal{C}_J^n = C^\infty(J,X^n)$ and let $d: \mathcal{C}_J^n \to \mathcal{C}_J^{n-1}$ be the continuous $C^\infty(J)$-linear map associated to the smooth family $\{d_t\}_{t \in J}$.  We obtain a chain complex
\[ (\mathcal{C}_J^\bullet, d) := \left(
\xymatrix{
\ldots \ar[r]^-d & \mathcal{C}_J^{n-1} \ar[r]^-d & \mathcal{C}_J^n \ar[r]^-d & \mathcal{C}_J^{n+1} \ar[r]^-d &\ldots
}
\right) \]
of locally convex $C^\infty(J)$-modules.  We'll call $\mathcal{C}_J^\bullet$ the \emph{complex of sections} of the deformation.  The cohomology $H^\bullet(\mathcal{C}_J)$ is a $C^\infty(J)$-module, and the evaluation chain maps $\epsilon_t: \mathcal{C}_J^\bullet \to \mathcal{C}_t^\bullet$ induce maps on cohomology
\[ (\epsilon_t)_*: H^\bullet(\mathcal{C}_J) \to H^\bullet(\mathcal{C}_t). \]

By a morphism of two deformations, we mean a continuous $C^\infty(J)$-linear (degree $0$) chain map between their respective complexes of sections.  We'll call a deformation \emph{trivial} if it is isomorphic to a constant deformation.

\begin{proposition}
Suppose $(\mathcal{C}^\bullet, d)$ is a cochain complex of Fr\'{e}chet spaces such that the cohomology $H^\bullet(\mathcal{C})$ is Hausdorff.  Let $\mathcal{C}_J^\bullet = C^\infty(J, \mathcal{C}^\bullet)$ be the complex of sections of the constant deformation with fiber $\mathcal{C}^\bullet$.  Then
\[ H^\bullet(\mathcal{C}_J) \cong C^\infty\left(J, H^\bullet(\mathcal{C})\right) \] as locally convex $C^\infty(J)$-modules.
\end{proposition}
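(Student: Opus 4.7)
The plan is to produce the isomorphism as the natural map
\[
\Psi: H^n(\mathcal{C}_J) \longrightarrow C^\infty(J, H^n(\mathcal{C})), \qquad [f] \longmapsto \bigl(t \mapsto [f(t)]\bigr),
\]
and to verify it is an isomorphism by computing the cocycles and coboundaries of $\mathcal{C}_J^\bullet$ explicitly. Because the deformation is constant, the differential acts fibrewise, so a section $f \in C^\infty(J, X^n)$ is a cocycle if and only if $f(t) \in Z^n(\mathcal{C})$ for all $t$. Since $Z^n(\mathcal{C})$ is closed in the Fr\'echet space $X^n$ (hence Fr\'echet), this identifies $Z^n(\mathcal{C}_J) = C^\infty(J, Z^n(\mathcal{C}))$.

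The main work is to show $B^n(\mathcal{C}_J) = C^\infty(J, B^n(\mathcal{C}))$, i.e.\ that every smooth family of coboundaries admits a smooth family of primitives. The Hausdorff hypothesis says precisely that $B^n(\mathcal{C})$ is closed in $Z^n(\mathcal{C})$, hence closed in $X^n$, so $B^n(\mathcal{C})$ is itself a Fr\'echet space. The continuous linear bijection $X^{n-1}/Z^{n-1}(\mathcal{C}) \to B^n(\mathcal{C})$ induced by $d$ is then a topological isomorphism by the open mapping theorem. Invoking Proposition~\ref{Proposition-QuotientOfFreeModules} with the nuclear Fr\'echet algebra $R = C^\infty(J)$ gives
\[
C^\infty(J, X^{n-1})\big/C^\infty(J, Z^{n-1}(\mathcal{C})) \;\cong\; C^\infty(J, B^n(\mathcal{C})),
\]
which is exactly the statement that the pointwise differential $d: C^\infty(J, X^{n-1}) \to C^\infty(J, B^n(\mathcal{C}))$ is surjective. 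Thus every smooth section of $B^n(\mathcal{C})$ is $d$ of some element of $\mathcal{C}_J^{n-1}$, yielding $B^n(\mathcal{C}_J) = C^\infty(J, B^n(\mathcal{C}))$.

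Applying Proposition~\ref{Proposition-QuotientOfFreeModules} once more, now to the closed subspace $B^n(\mathcal{C}) \subset Z^n(\mathcal{C})$ (closedness again being the Hausdorff hypothesis), produces
\[
H^n(\mathcal{C}_J) = C^\infty(J, Z^n(\mathcal{C}))\big/C^\infty(J, B^n(\mathcal{C})) \;\cong\; C^\infty(J, H^n(\mathcal{C})),
\]
and tracing through the construction one sees this isomorphism coincides with $\Psi$, completing the proof. The main obstacle is the smooth lifting step: in the absence of nuclearity of $C^\infty(J)$, tensoring the short exact sequence $0 \to Z^{n-1}(\mathcal{C}) \to X^{n-1} \to B^n(\mathcal{C}) \to 0$ with $C^\infty(J)$ need not remain exact, and smooth coboundaries could fail to admit smooth primitives; Proposition~\ref{Proposition-QuotientOfFreeModules} is exactly the tool that circumvents this.
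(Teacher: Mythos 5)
Your proof is correct and takes essentially the same approach as the paper's: identify the cocycles $Z^n(\mathcal{C}_J)$ with $C^\infty(J, Z^n(\mathcal{C}))$, prove the coboundaries $B^n(\mathcal{C}_J)$ fill out all of $C^\infty(J, B^n(\mathcal{C}))$ using nuclearity of $C^\infty(J)$, and then apply Proposition~\ref{Proposition-QuotientOfFreeModules} to the quotient. The only variation is that for the surjectivity of the pointwise differential onto $C^\infty(J, B^n(\mathcal{C}))$, the paper cites Tr\`{e}ves' Proposition~43.9 directly, while you derive the same conclusion by factoring the differential through $X^{n-1}/Z^{n-1}(\mathcal{C}) \cong B^n(\mathcal{C})$ and invoking Proposition~\ref{Proposition-QuotientOfFreeModules} a second time---a slightly more self-contained route resting on the same use of nuclearity.
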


\begin{proof}
Notice that requiring $H^n(\mathcal{C}) = Z^n(\mathcal{C}) / B^n(\mathcal{C})$ to be Hausdorff is equivalent to requiring the space of coboundaries $B^n(\mathcal{C})$ to be closed.  In this case, both $B^n(\mathcal{C})$ and $H^n(\mathcal{C})$ are Fr\'{e}chet spaces for all $n$.

Notice that $Z^n(\mathcal{C}_J) = C^\infty(J, Z^n(\mathcal{C}))$, but a priori we only have $B^n(\mathcal{C}_J) \subseteq C^\infty(J, B^n(\mathcal{C}))$.  However, since $d: \mathcal{C}^n \to B^{n+1}(\mathcal{C})$ is a surjection of Fr\'{e}chet spaces, it follows from \cite[Proposition 43.9]{MR0225131} that
\[ 1 \otimes d: C^\infty(J) \potimes \mathcal{C}^n \to C^\infty(J) \potimes B^{n+1}(\mathcal{C}) \] is surjective as well.  That is, $B^n(\mathcal{C}_J) = C^\infty(J, B^n(\mathcal{C}))$ for all $n$.  Thus,
\[ H^n(\mathcal{C}_J) = Z^n(\mathcal{C}_J)/B^n(\mathcal{C}_J) = C^\infty(J, Z^n(\mathcal{C})) / C^\infty(J, B^n(\mathcal{C})) \cong C^\infty(J, H^n(\mathcal{C})), \] where the last isomorphism is from Proposition \ref{Proposition-QuotientOfFreeModules}.

\end{proof}

%

\begin{example} \label{Example-CyclicComplexOfDeformation}
If $\{A_t\}_{t \in J}$ is a smooth deformation of algebras, then $\{(C_{\per}(A_t), b_t + B)\}_{t \in J}$ is a smooth deformation of chain complexes.  Notice that the Hochschild boundary $b_t$ depends on the multiplication of $A_t$, whereas the operator $B$ does not.  Since the completed projective tensor product commutes with direct products \cite[Theorem 15.4.1]{MR632257}, the complex of sections of $\{C_{\per}(A_t)\}_{t \in J}$ is naturally identified with the periodic cyclic complex $C_{\per}^{C^\infty(J)}(A_J)$.  One can also consider the complexes associated to the various other homology/cohomology theories discussed above.
\end{example}




As in the algebra case, we can characterize triviality of a deformation of chain complexes in terms of connections.  The proofs here are analogous those in the algebra case.

\begin{proposition} \label{Proposition-BundleOfComplexesParallelTransport}
A smooth deformation of cochain complexes $\{\mathcal{C}^\bullet_t\}_{t \in J}$ is trivial if and only if the complex of sections $\mathcal{C}_J^\bullet$ admits an integrable connection that is a chain map.  For such a connection $\nabla$, the parallel translation map $P^\nabla_{s,t}: \mathcal{C}^\bullet_s \to \mathcal{C}^\bullet_t$ is an isomorphism of locally convex cochain complexes for all $s,t \in J$.  In particular, the parallel translation maps induce isomorphisms
\[ (P^\nabla_{s,t})_*: H^\bullet(\mathcal{C}_s) \to H^\bullet(\mathcal{C}_t). \]
\end{proposition}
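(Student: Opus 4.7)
The plan is to mirror the proof of Proposition~\ref{Proposition-TrivialDeformationOfAlgebras}, with the chain map condition playing the role of the derivation condition. A connection on the graded module $\mathcal{C}_J^\bullet$ means a degree-preserving family of connections $\{\nabla^n\}$ on each $\mathcal{C}_J^n$, and saying $\nabla$ is a chain map means $d \circ \nabla^n = \nabla^{n+1} \circ d$ for all $n$. Equivalently, each $d: \mathcal{C}_J^n \to \mathcal{C}_J^{n+1}$ is a parallel map in the sense of Section~4.

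For the forward direction, suppose the deformation is trivial via a continuous $C^\infty(J)$-linear chain isomorphism $F: \mathcal{C}_J^\bullet \to \mathcal{D}_J^\bullet$, where $\mathcal{D}_J^\bullet = C^\infty(J, \mathcal{D}^\bullet)$ is the complex of sections of a constant deformation. Since $\mathcal{D}^\bullet$ has constant coefficients, the connection $\frac{d}{dt}$ (degree-by-degree) is integrable on $\mathcal{D}_J^\bullet$ and commutes with the differential. Pulling back via $F$, the operator
\[ \nabla := F^{-1} \circ \tfrac{d}{dt} \circ F \]
is a connection on $\mathcal{C}_J^\bullet$, integrable by construction since $F$ serves as a parallel isomorphism to $(\mathcal{D}_J^\bullet, \frac{d}{dt})$, and it commutes with $d$ because $F$ is a chain map.

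For the converse, suppose $\nabla$ is an integrable connection on $\mathcal{C}_J^\bullet$ commuting with $d$. Viewing $d: \mathcal{C}_J^n \to \mathcal{C}_J^{n+1}$ as a parallel $C^\infty(J)$-linear map, Proposition~\ref{Proposition-ParallelFamilyCommutesWithParallelTransport} yields the commutative diagram
\[ d_t \circ P^{\nabla^n}_{s,t} = P^{\nabla^{n+1}}_{s,t} \circ d_s, \]
so each $P^\nabla_{s,t}: \mathcal{C}_s^\bullet \to \mathcal{C}_t^\bullet$ is an isomorphism of locally convex cochain complexes. Fixing $s \in J$, the family $\{P^\nabla_{s,t}\}_{t \in J}$ assembles into a $C^\infty(J)$-linear chain isomorphism between the complex of sections of the constant deformation with fiber $\mathcal{C}_s^\bullet$ and $\mathcal{C}_J^\bullet$, exactly as in Corollary~\ref{Corollary-ParallelTranslationFormula} applied in each degree. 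Since each $P^\nabla_{s,t}$ is a chain isomorphism, it descends to an isomorphism $(P^\nabla_{s,t})_*: H^\bullet(\mathcal{C}_s) \to H^\bullet(\mathcal{C}_t)$.

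I do not anticipate any real obstacle: the entire argument is formal given the parallel translation machinery of Section~4, and there are no new analytic subtleties beyond those subsumed by the hypothesis that $\nabla$ is integrable. The only thing to be careful about is bookkeeping of degrees, since $\nabla$ is really a family $\{\nabla^n\}$ of connections indexed by $n$, and one must apply the relevant results of Section~4 separately in each degree before reassembling into a statement about the graded complex.
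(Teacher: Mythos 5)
Your proof is correct and follows exactly the approach the paper intends: the paper itself states that ``the proofs here are analogous to those in the algebra case'' and gives no separate argument, so mirroring Proposition~\ref{Proposition-TrivialDeformationOfAlgebras} degree-by-degree — using Proposition~\ref{Proposition-ParallelFamilyCommutesWithParallelTransport} to see that $d$ intertwines the parallel transport maps, and then assembling $\{P^\nabla_{s,t}\}_{t\in J}$ into a trivializing isomorphism — is precisely what is expected. One minor note: the existence of the $C^\infty(J)$-linear chain isomorphism with the constant deployment is better cited to the construction in the proof of Theorem~\ref{Theorem-IntegrableConnections} (where $G: C^\infty(J, M_s) \to M$ is built from $P^\nabla$ and shown parallel) rather than to Corollary~\ref{Corollary-ParallelTranslationFormula}, which goes the other way, expressing $P^\nabla_{s,t}$ in terms of a given parallel isomorphism; but this is a citation nuance, not a gap.
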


The obstruction to the existence of such a connection is again cohomological.  Let $\nabla$ be any connection on $\mathcal{C}_J^\bullet$, and consider the map
\[ G = [d, \nabla]: \mathcal{C}_J^\bullet \to \mathcal{C}_J^{\bullet + 1}, \]
which is the defect of $d$ from being a $\nabla$-parallel map (equivalently, the defect of $\nabla$ from being a chain map).  It follows that $G$ is $C^\infty(J)$-linear and $[d, G] = 0$, so $G$ is a cocycle in the endomorphism complex $\End_{C^\infty(J)}(\mathcal{C}_J)$.

\begin{proposition} \label{Proposition-ClassOfG}
The cohomology class $[G] \in H^1(\End_{C^\infty(J)}(\mathcal{C}_J))$ is independent of the choice of connection $\nabla$.  Moreover, $[G] = 0$ if and only if $\mathcal{C}_J^\bullet$ admits a connection that is a chain map.
\end{proposition}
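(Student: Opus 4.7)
The plan is to mirror the proof of Proposition~\ref{Proposition-ClassOfE} in the endomorphism complex $\End_{C^\infty(J)}(\mathcal{C}_J)$. Recall the coboundary there is the graded commutator $[d, \cdot]$, so a cocycle is a $C^\infty(J)$-linear operator anticommuting with $d$ and a coboundary is $[d, F]$ for some $F \in \End_{C^\infty(J)}(\mathcal{C}_J)$. The text already verifies that $G = [d, \nabla]$ is a $C^\infty(J)$-linear degree $1$ cocycle, so I only need to address the independence and the triviality equivalence.

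For independence, I would take two connections $\nabla$ and $\nabla'$ on $\mathcal{C}_J^\bullet$ with corresponding defects $G = [d, \nabla]$ and $G' = [d, \nabla']$. Since the difference of two connections is $C^\infty(J)$-linear, there is some $F \in \End_{C^\infty(J)}(\mathcal{C}_J)$ with $\nabla' = \nabla - F$. Then bilinearity of the graded commutator gives
\[ G' = [d, \nabla] - [d, F] = G - [d, F], \]
so $G$ and $G'$ represent the same class in $H^1\!\left(\End_{C^\infty(J)}(\mathcal{C}_J)\right)$.

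For the equivalence, one direction is immediate: if $\mathcal{C}_J^\bullet$ admits a connection $\nabla$ that is a chain map, then $G = [d, \nabla] = 0$, so certainly $[G] = 0$. Conversely, suppose $[G] = 0$ for some (equivalently, every) connection $\nabla$. Then $G = [d, F]$ for some $F \in \End_{C^\infty(J)}(\mathcal{C}_J)$, and I set $\nabla' := \nabla - F$. Since $F$ is $C^\infty(J)$-linear, $\nabla'$ still satisfies the Leibniz rule and is thus a connection, and
\[ [d, \nabla'] = [d, \nabla] - [d, F] = G - G = 0, \]
so $\nabla'$ is a chain map.

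There is no real obstacle here; the only mild subtlety is bookkeeping the sign conventions in the graded commutator (since $\nabla$ has odd degree $0$ in the cochain direction but behaves as a degree $0$ operator with respect to the internal grading of $\mathcal{C}_J^\bullet$, and $d$ has degree $1$). Provided the conventions set up earlier in the paper for $\Op(A)$ and $\End_R(C_{\per}(A))$ are applied consistently, the two displays above go through verbatim, and the proof is a direct analogue of the algebra case treated in Proposition~\ref{Proposition-ClassOfE}.
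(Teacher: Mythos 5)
Your proof is correct and is precisely the analogue of Proposition~\ref{Proposition-ClassOfE} that the paper itself appeals to (the paper omits the proof, saying only that it is analogous to the algebra case). The key observations --- that the difference of two connections is $C^\infty(J)$-linear, and that adding a $C^\infty(J)$-linear $F$ to a connection produces another connection --- are exactly the ones needed, and since both $\nabla$ and $F$ have degree $0$ the graded commutator $[d,\cdot]$ reduces to the ordinary one, so the sign bookkeeping you flag is not actually an issue.
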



Suppose $\mathcal{C}_J^\bullet$ is equipped with a connection $\nabla$ that is a chain map.  Our main goal is to identify the cohomology groups $H^\bullet(\mathcal{C}_s) \cong H^\bullet(\mathcal{C}_t)$ of different fibers via parallel translation.  By Proposition \ref{Proposition-BundleOfComplexesParallelTransport}, this happens when $\nabla$ is integrable.  In this case, the cochain complexes themselves are fiberwise isomorphic, and this may be too strong of a condition to be useful in practice.

As $\nabla$ is a chain map, it induces a connection $\nabla_*$ on the $C^\infty(J)$-module $H^\bullet(\mathcal{C}_J)$.  The homology module may not be free, complete, or even Hausdorff, so we should be careful about what we mean by integrability of $\nabla_*$.  Nonetheless, it makes sense to inquire about the existence and uniqueness of a solution $[c] \in H^\bullet(\mathcal{C}_J)$ to the cohomological differential equation
\[ \nabla_* [c] = 0, \qquad [c(s)] = [c_0] \]
with initial value $[c_0] \in H^\bullet(\mathcal{C}_s)$.  Having this is enough to construct parallel translation operators
\[ P^{\nabla_*}_{s,t}: H^\bullet(\mathcal{C}_s) \to H^\bullet(\mathcal{C}_t), \]
which are linear isomorphisms.  Additionally, if the map $[c_0] \mapsto [c]$ is continuous, then $P^{\nabla_*}_{s,t}$ is continuous, hence an isomorphism of topological vector spaces.

\section{Some rigidity results}

The results in this section are largely not original.  Most are stated, with slight variations, in \cite{Crainic}, where they are proved using the homological perturbation lemma.  We give different proofs using our methods in the setting of smooth deformations.

We'll call a locally convex algebra $A$ \emph{(smoothly) rigid} if every smooth deformation $\{A_t\}_{t \in J}$ with $A_0 = A$ is trivial on some interval $J' \subset J$ containing $0$.  Similarly, we can define rigidity of a cochain complex.  Our main tool for proving rigidity results in the setting of Banach spaces is the following lemma.

\begin{lemma} \label{Lemma-DeformationOfContractibleHasZeroCohomology}
Let $\{(\mathcal{C}_t^\bullet, d_t)\}_{t \in J}$ be a smooth deformation of cochain complexes of Banach spaces, and suppose the complex $\mathcal{C}_0^\bullet$ has a continuous linear contracting homotopy in degree $n$
\[ \xymatrix{
\mathcal{C}_0^{n-1} \ar@<.5ex>[r]^-{d_0} & \mathcal{C}_0^n \ar@<.5ex>[l]^-h \ar@<.5ex>[r]^-{d_0}& \mathcal{C}_0^{n+1}, \ar@<.5ex>[l]^-h
} \qquad d_0 h + h d_0 = 1.\]
Then there is a subinterval $J' = (-\epsilon, \epsilon)$ such that \[ Z^n(\mathcal{C}_{J'}) \cong C^\infty(J', Z^n(\mathcal{C}_0)), \qquad B^n(\mathcal{C}_{J'}) \cong C^\infty(J', B^n(\mathcal{C}_0)), \qquad H^n(\mathcal{C}_{J'}) = 0.\]
\end{lemma}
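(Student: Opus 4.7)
The plan is to use Banach-space invertibility of a perturbation operator to produce a modified contracting homotopy for the deformed complex in degree $n$, and then deduce both the vanishing of cohomology and the free-module identifications from it.

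First, consider the smooth family of continuous endomorphisms $T_t := d_t h + h d_t$ on the Banach space $\mathcal{C}^n$, for $t \in J$. By hypothesis $T_0 = 1_{\mathcal{C}^n}$, and since the invertible elements of the Banach algebra $\End(\mathcal{C}^n)$ form an open subset, continuity of $t \mapsto T_t$ yields an $\epsilon > 0$ such that $T_t$ is invertible for every $t \in J' := (-\epsilon, \epsilon)$. By Corollary~\ref{Corollary-BanachSpaceInvertibleSmoothFamily}, $\{T_t^{-1}\}_{t \in J'}$ is a smooth family of continuous linear maps, inducing a $C^\infty(J')$-linear topological automorphism $T^{-1}$ of $\mathcal{C}_{J'}^n = C^\infty(J', \mathcal{C}^n)$.

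Next, I would use $T^{-1}$ together with the given $h$ to construct a continuous $C^\infty(J')$-linear contracting homotopy $\widetilde{h}$ for $(\mathcal{C}_{J'}^\bullet, d)$ in degree $n$, in the style of the homological perturbation lemma. The two components $\widetilde{h}^{n-1}: \mathcal{C}_{J'}^n \to \mathcal{C}_{J'}^{n-1}$ and $\widetilde{h}^n: \mathcal{C}_{J'}^{n+1} \to \mathcal{C}_{J'}^n$ should be built by composing $h^{n-1}$ and $h^n$ with appropriate factors of $T^{-1}$ so that the identity $d h^{n-1} + h^n d = T$ produces $d \widetilde{h} + \widetilde{h} d = 1$ on $\mathcal{C}_{J'}^n$. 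Vanishing of cohomology is then immediate: every $c \in Z^n(\mathcal{C}_{J'})$ satisfies $c = d\widetilde{h} c + \widetilde{h} d c = d(\widetilde{h} c)$, so $c \in B^n(\mathcal{C}_{J'})$, forcing $Z^n(\mathcal{C}_{J'}) = B^n(\mathcal{C}_{J'})$ and $H^n(\mathcal{C}_{J'}) = 0$.

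The modified homotopy also yields a continuous $C^\infty(J')$-linear retraction $\sigma := d \widetilde{h}^{n-1}: \mathcal{C}_{J'}^n \to Z^n(\mathcal{C}_{J'})$ (acting as the identity on $Z^n(\mathcal{C}_{J'})$), while the same recipe applied to the constant deformation with fiber $\mathcal{C}_0^\bullet$ gives a retraction $\sigma_0 := d_0 h^{n-1}: \mathcal{C}_{J'}^n \to C^\infty(J', Z^n(\mathcal{C}_0))$. These two retractions agree at $t = 0$ and vary smoothly in $t$, so a careful comparison using $T^{-1}$ produces the required continuous $C^\infty(J')$-linear isomorphism $Z^n(\mathcal{C}_{J'}) \cong C^\infty(J', Z^n(\mathcal{C}_0))$, and analogously $B^n(\mathcal{C}_{J'}) \cong C^\infty(J', B^n(\mathcal{C}_0))$. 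The main obstacle is the construction of $\widetilde{h}$: since the contracting homotopy is assumed only in degree $n$, the modifications made in the two adjacent degrees must be chosen compatibly so the correction terms cancel on the nose, and this is precisely where the Banach-algebra invertibility of $T_t$ plays its essential role.
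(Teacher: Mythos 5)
Your opening step is fine: $T_t = d_t h + h d_t$ is a smooth family of operators on the Banach space $\mathcal{C}^n$ with $T_0 = 1$, so $T_t$ is invertible for $t$ in some interval $J'$, and Corollary~\ref{Corollary-BanachSpaceInvertibleSmoothFamily} makes $T^{-1}$ a $C^\infty(J')$-linear automorphism. The gap is in the next step, which you yourself flag as ``the main obstacle'' and then do not resolve: there is no evident way to build the modified homotopy $\widetilde{h}$ from $h$ and $T^{-1}$. The homological-perturbation recipe you allude to needs $h$ in a range of degrees together with side conditions (typically $h^2=0$, $hd h = h$, etc.); here $h$ lives only in degree $n$, and $T_t$ is only defined on $\mathcal{C}^n$, so one cannot even pose a commutation relation like $T_t^{-1} d_t = d_t T_t^{-1}$ that would let the correction factors propagate. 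Concretely, setting $\widetilde{h}^{n-1} = h T^{-1}$ forces $\widetilde{h}^n d_t = h d_t T_t^{-1}$ on $\mathcal{C}^n$, and there is no operator on $\mathcal{C}^{n+1}$ that achieves this in general; similarly $\pi_t := d_t h T_t^{-1} = 1 - h d_t T_t^{-1}$ has image in $B^n(\mathcal{C}_t)$ (since $d_t\pi_t = d_t - d_t h d_t T_t^{-1} = 0$ using $d_t T_t = d_t h d_t$), but $\pi_t$ is not idempotent and does not restrict to the identity on $Z^n(\mathcal{C}_t)$, so one cannot conclude $Z^n = B^n$ from it. The final paragraph, deriving the free-module identifications of $Z^n$ and $B^n$ by ``a careful comparison,'' is an assertion with no construction behind it.

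The paper's proof sidesteps this entirely. It uses that $d_0 h$ and $h d_0$ are complementary idempotents on $\mathcal{C}^n$ to write $\mathcal{C}^n = Z^n(\mathcal{C}_0) \oplus W^n$ and $\mathcal{C}^{n-1} = Z^{n-1}(\mathcal{C}_0)\oplus W^{n-1}$ with $W^{n-1}=h(B^n(\mathcal{C}_0))$, $W^n=h(B^{n+1}(\mathcal{C}_0))$, and then works with the projection $\pi:\mathcal{C}^n\to Z^n(\mathcal{C}_0)$. The two Banach-openness arguments there are localized: $\pi\circ d_0|_{W^{n-1}}$ is an isomorphism onto $B^n(\mathcal{C}_0)$, so $\pi\circ d_t|_{W^{n-1}}$ remains one for small $t$ (giving surjectivity of $\pi$ on $B^n(\mathcal{C}_{J'})$); and $d_0|_{W^n}$ is bounded below, so $d_t|_{W^n}$ stays injective for small $t$ (giving injectivity of $\pi$ on $Z^n(\mathcal{C}_{J'})$). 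The module isomorphisms and the vanishing of $H^n$ then all fall out of the open mapping theorem applied to $\pi$. If you want to keep your $T_t$ as the perturbation-theoretic input, you would still need to pass to the splitting $\mathcal{C}^n = Z^n(\mathcal{C}_0)\oplus W^n$ and argue degree by degree as the paper does; the invertibility of $T_t$ alone does not manufacture a contracting homotopy for the deformed differential.
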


\begin{proof}
By assumption, there are split short exact sequences
\[ \xymatrix{
0 \ar[r] & Z^{n-1}(\mathcal{C}_0) \ar[r] &\mathcal{C}_0^{n-1} \ar@<.5ex>[r]^-{d_0} & B^n(\mathcal{C}_0) \ar@<.5ex>[l]^-h \ar[r] &0,
} \]
\[ \xymatrix{
0 \ar[r] & Z^n(\mathcal{C}_0) \ar[r] &\mathcal{C}_0^n \ar@<.5ex>[r]^-{d_0} & B^{n+1}(\mathcal{C}_0) \ar@<.5ex>[l]^-h \ar[r] &0,
} \]
and $B^n(\mathcal{C}_0) = Z^n(\mathcal{C}_0)$.  So the cocycles are complemented in the space of cochains, that is, there are closed subspaces $W^{n-1} = h(B^n(\mathcal{C}_0))$ and $W^n = h(B^{n+1}(\mathcal{C}_0))$ for which
\[ \mathcal{C}_0^{n-1} = Z^{n-1}(\mathcal{C}_0) \oplus W^{n-1}, \qquad \mathcal{C}_0^n = Z^n(\mathcal{C}_0) \oplus W^n. \]
Let $\pi: \mathcal{C}_0^n \to Z^n(\mathcal{C}_0) = B^n(\mathcal{C}_0)$ be the projection.  Then $\pi$ induces a continuous $C^\infty(J)$-linear map $\pi: \mathcal{C}_J^n \to C^\infty(J, Z^n(\mathcal{C}_0))$, which restricts to the maps
\[ \pi: Z^n(\mathcal{C}_J) \to C^\infty(J, Z^n(\mathcal{C}_0)), \qquad \pi: B^n(\mathcal{C}_J) \to C^\infty(J, B^n(\mathcal{C}_0)). \]
We claim these are topological isomorphisms for a small enough interval $J'$ containing $0$.  We'll prove this by showing $\pi$ is injective on cocycles and surjective on coboundaries.  The results then follow from the commutative diagram
\[ \xymatrix{
B^n(\mathcal{C}_{J'}) \ar@{^{(}->}[r] \ar[d]^-\pi &Z^n(\mathcal{C}_{J'}) \ar[d]^-\pi \\
C^\infty(J', B^n(\mathcal{C}_0)) \ar@{=}[r] & C^\infty(J', Z^n(\mathcal{C}_0))
} \]
and the open mapping theorem.

Consider the family of maps $\pi \circ d_t$ restricted to $W^{n-1}$.  When $t = 0$,
\[ \pi \circ d_0: W^{n-1} \to B^n(\mathcal{C}_0) \] is a topological isomorphism of Banach spaces.  So there is some $\epsilon > 0$ for which $\pi \circ d_t: W^{n-1} \to B^n(\mathcal{C}_0)$ is a topological isomorphism for all $t \in J' := (-\epsilon, \epsilon)$.  From Corollary \ref{Corollary-BanachSpaceInvertibleSmoothFamily}, the induced $C^\infty(J')$-linear map
\[ \pi \circ d: C^\infty(J', W^{n-1}) \to C^\infty(J', B^n(\mathcal{C}_0)) \] is an isomorphism, and in particular it is surjective.  It follows that $\pi: B^n(\mathcal{C}_{J'}) \to C^\infty(J', B^n(\mathcal{C}_0))$ is surjective.

Now consider the map $d_0: W^n \to B^{n+1}(\mathcal{C}_0)$, which is a topological isomorphism of Banach spaces.  In particular it is bounded below, so that
\[ \norm{d_0(w)} \geq C \norm{w}, \qquad \forall w \in W^n \]
for some constant $C >0$.
Since $t \mapsto d_t$ is norm continuous, the maps $d_t: W^n \to \mathcal{C}_t^{n+1}$ are bounded below for $t$ in a small enough interval $J'$.  In particular they are injective.  Let's show $\pi: Z^n(\mathcal{C}_t) \to Z^n(\mathcal{C}_0)$ is injective for all $t \in J'$.  Consider an element $z \in Z^n(\mathcal{C}_t)$.  As vector spaces, $\mathcal{C}_t^n = \mathcal{C}_0^n = Z^n(\mathcal{C}_0) \oplus W^n$, so we can write
\[ z = z_0 + w, \qquad z_0 \in Z^n(\mathcal{C}_0), \quad w \in W^n. \]
If $z \in \ker \pi$, then $z_0 = 0$.  Since $z \in Z^n(\mathcal{C}_t)$, we have $0 = d_t(z) = d_t(w)$.  Since $d_t$ is injective, $w = 0$ and so $z = 0$.  This shows $\pi: Z^n(\mathcal{C}_t) \to Z^n(\mathcal{C}_0)$ is injective for all $t \in J'$, and consequently $\pi: Z^n(\mathcal{C}_{J'}) \to C^\infty(J', Z^n(\mathcal{C}_0))$ is injective.
\end{proof}

A variation of the following theorem was first proved in \cite{MR0634038} using a certain ``inverse function theorem".

\begin{theorem}
Let $A$ be a Banach algebra whose Hochschild cochain complex has a continuous linear contracting homotopy in degree $2$
\[ \xymatrix{
C^1(A, A) \ar@<.5ex>[r]^-{\delta} & C^2(A, A) \ar@<.5ex>[l]^-h \ar@<.5ex>[r]^-{\delta} & C^3(A, A), \ar@<.5ex>[l]^-h
} \qquad \delta h + h \delta = 1,\]
so that $H^2(A,A) = 0$.  Then $A$ is rigid.
\end{theorem}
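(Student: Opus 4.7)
The plan is to use Proposition~\ref{Proposition-TrivialDeformationOfAlgebras}, which reduces the triviality of a deformation to the existence of an integrable connection on the algebra of sections that is also a derivation. Given an arbitrary smooth deformation $\{A_t\}_{t \in J}$ with $A_0 = A$, I would work with $A_J = C^\infty(J,A)$ and aim to produce such a connection after shrinking $J$ to some interval $J' \ni 0$. There are two separate obstacles: the cohomological obstruction $[E] \in H^2_{C^\infty(J)}(A_J, A_J)$ from Proposition~\ref{Proposition-ClassOfE}, and the analytic question of integrability of the resulting connection. The Banach space hypothesis disposes of the analytic obstacle for free, since every connection on the free $C^\infty(J')$-module $C^\infty(J',A)$ is integrable. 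So the crux is to kill the class $[E]$.

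To do this, I would view $\{C^\bullet(A_t,A_t)\}_{t \in J}$ as a smooth deformation of cochain complexes of Banach spaces. The underlying Banach spaces $C^n(A,A)$ of bounded $n$-multilinear maps do not depend on $t$, and the Hochschild coboundaries $\delta_t$ form a smooth family in the sense of Definition~\ref{Definition-SmoothFamilyOfLinearMaps} because they are built from the smooth family $\{m_t\}$ via the explicit Hochschild formula. Using Proposition~\ref{Proposition-BanachSpaceSmoothPathsOfHoms} (Banach case) together with iterated applications of Proposition~\ref{Proposition-TensorProductOfFreeModules}, the complex of sections of this deformation is naturally identified with the $C^\infty(J)$-linear Hochschild cochain complex $C^\bullet_{C^\infty(J)}(A_J, A_J)$; moreover, since the multiplication of $A_J$ fiberwise restricts to $m_t$, the coboundary of the section complex corresponds to the Hochschild coboundary of $A_J$ over $C^\infty(J)$.

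Now I would apply Lemma~\ref{Lemma-DeformationOfContractibleHasZeroCohomology} with $n = 2$: the hypothesized continuous linear contracting homotopy for $C^\bullet(A,A) = C^\bullet(A_0,A_0)$ in degree $2$ produces an interval $J' = (-\epsilon, \epsilon)$ on which $H^2$ of the section complex vanishes. In view of the identification above, this gives $H^2_{C^\infty(J')}(A_{J'}, A_{J'}) = 0$. Proposition~\ref{Proposition-ClassOfE} then yields a connection $\nabla$ on $A_{J'}$ that is a derivation. Because $A$ is a Banach space, the theorem preceding Proposition~\ref{Proposition-NilpotentPerturbation} guarantees that $\nabla$ is integrable on $C^\infty(J',A) = A_{J'}$. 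A second appeal to Proposition~\ref{Proposition-TrivialDeformationOfAlgebras} then gives the triviality of $\{A_t\}_{t \in J'}$.

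The main obstacle in carrying this out is the verification that the Hochschild cohomology of the algebra of sections over $C^\infty(J)$ is genuinely the cohomology of the section complex produced from the fiberwise Hochschild complexes, so that the output of Lemma~\ref{Lemma-DeformationOfContractibleHasZeroCohomology} is exactly the group in which the obstruction $[E]$ lives. Once this identification is pinned down, the remaining steps are direct assembly of the results already in place, and the argument does not require anything deeper than what has been developed for free modules over $C^\infty(J)$ with Banach fibers.
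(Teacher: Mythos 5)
Your proposal is correct and follows essentially the same route as the paper: view $\{C^\bullet(A_t,A_t)\}_{t \in J}$ as a deformation of Banach cochain complexes, identify its complex of sections with $C^\bullet_{C^\infty(J)}(A_J,A_J)$ via Proposition~\ref{Proposition-BanachSpaceSmoothPathsOfHoms}, apply Lemma~\ref{Lemma-DeformationOfContractibleHasZeroCohomology} to kill $H^2$ over a subinterval, extract a connection that is a derivation from Proposition~\ref{Proposition-ClassOfE}, and invoke integrability of connections on free Banach-fibered modules to conclude triviality via Proposition~\ref{Proposition-TrivialDeformationOfAlgebras}. You simply spell out some steps (the role of Proposition~\ref{Proposition-TensorProductOfFreeModules} and the explicit appeal to Proposition~\ref{Proposition-ClassOfE}) that the paper leaves implicit.
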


\begin{proof}
Given a smooth deformation $\{A_t\}_{t \in J}$ with $A_0 = A$, consider the deformation of cochain complexes $\{C^\bullet(A_t, A_t)\}_{t \in J}$.  Using Proposition \ref{Proposition-BanachSpaceSmoothPathsOfHoms}, its complex of sections naturally identifies with the Hochschild complex $C^\bullet_{C^\infty(J)}(A_J, A_J)$.  By Lemma \ref{Lemma-DeformationOfContractibleHasZeroCohomology}, $H^2_{C^\infty(J')}(A_{J'}, A_{J'}) = 0$ for some subinterval $J' \subset J$ containing $0$.  So $A_{J'}$ has a connection that is a derivation, and it is integrable because the underlying space is a Banach space.  This shows $\{A_t\}_{t \in J'}$ is trivial.
\end{proof}

Of course when $A$ is finite dimensional, then such a contracting homotopy will exist whenever $H^2(A,A) = 0$.  As an example, a finite direct sum $A$ of matrix algebras satisfies $H^2(A,A) = 0$.  Thus, all finite dimensional $C^*$-algebras are rigid, as associative algebras.  For a general Banach algebra with $H^2(A,A) = 0$, we will need to assume the existence of the homotopy $h$.  One can possibly circumvent this by considering nonlinear homotopy operators as in \cite{Crainic}.

\begin{theorem} \label{Theorem-ContractibleComplexIsRigid}
Let $(\mathcal{C}^\bullet, d)$ be a contractible cochain complex of Banach spaces that is bounded above and below in degree.  Then the complex $(\mathcal{C}^\bullet, d)$ is rigid.
\end{theorem}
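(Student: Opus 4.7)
The plan is to reduce rigidity of $(\mathcal{C}^\bullet,d)$ to Lemma~\ref{Lemma-DeformationOfContractibleHasZeroCohomology} applied not to the deformation itself but to the induced deformation of endomorphism complexes, and then to invoke Proposition~\ref{Proposition-ClassOfG} together with the integrability of every connection on a Banach-valued free module. Concretely, given a smooth deformation $\{(\mathcal{C}_t^\bullet,d_t)\}_{t\in J}$ with $\mathcal{C}_0^\bullet = \mathcal{C}^\bullet$, I want to produce a subinterval $J'\subset J$ containing $0$ on which the obstruction class $[G]\in H^1(\End_{C^\infty(J')}(\mathcal{C}_{J'}^\bullet))$ vanishes; the rest is automatic.

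First I would verify that the contracting homotopy $h$ on $\mathcal{C}^\bullet$ induces a continuous contracting homotopy on the endomorphism complex: for $f\in\End^k(\mathcal{C}^\bullet)$, set $H(f)=h\circ f$. A short sign computation shows
\[
[d,H(f)] + H([d,f]) = (dh+hd)\circ f = f,
\]
so $H$ is a continuous contracting homotopy of $(\End^\bullet(\mathcal{C}^\bullet),[d,-])$ in every degree. Next, because $\mathcal{C}^\bullet$ is bounded in degree, each $\End^k(\mathcal{C}^\bullet)=\bigoplus_n\Hom(\mathcal{C}^n,\mathcal{C}^{n+k})$ is a finite direct sum of Banach spaces, hence a Banach space. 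The smooth family $\{d_t\}$ produces a smooth family of differentials $\{[d_t,-]\}$ on this fixed graded Banach space, giving a smooth one-parameter deformation of cochain complexes of Banach spaces whose $t=0$ fiber is the contractible complex $(\End^\bullet(\mathcal{C}^\bullet),[d,-])$. Using Proposition~\ref{Proposition-BanachSpaceSmoothPathsOfHoms} degree-by-degree (and finiteness of the direct sums), the complex of sections of this deformation is canonically identified with $(\End_{C^\infty(J)}(\mathcal{C}_J^\bullet),[d,-])$.

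Now apply Lemma~\ref{Lemma-DeformationOfContractibleHasZeroCohomology} to this deformation of endomorphism complexes at degree $1$, using the contracting homotopy $H$ at $t=0$: there is some $J'=(-\epsilon,\epsilon)\subset J$ with
\[
H^1\bigl(\End_{C^\infty(J')}(\mathcal{C}_{J'}^\bullet)\bigr)=0.
\]
By Proposition~\ref{Proposition-ClassOfG}, this means $\mathcal{C}_{J'}^\bullet$ admits a connection $\nabla$ that is a chain map. Since each $\mathcal{C}_{J'}^n = C^\infty(J',\mathcal{C}^n)$ is a free $C^\infty(J')$-module over a Banach space, the theorem on Banach-valued ODE's guarantees $\nabla$ is integrable in every degree. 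Proposition~\ref{Proposition-BundleOfComplexesParallelTransport} then yields triviality of the deformation on $J'$, proving rigidity.

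The main issue to be careful about is the boundedness hypothesis: it is used precisely to ensure that each degree of the endomorphism complex is a Banach space (so that Lemma~\ref{Lemma-DeformationOfContractibleHasZeroCohomology} applies) and that Proposition~\ref{Proposition-BanachSpaceSmoothPathsOfHoms} can be assembled across degrees to identify the complex of sections with $\End_{C^\infty(J)}(\mathcal{C}_J^\bullet)$. Without boundedness one would be forced to consider infinite products or sums of $\Hom$ spaces, breaking the Banach structure needed by the key lemma.
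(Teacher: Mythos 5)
Your proof is correct and follows essentially the same route as the paper's: induce the contracting homotopy $H(f)=h\circ f$ on the endomorphism complex, use boundedness to conclude each $\End^k(\mathcal{C}^\bullet)$ is Banach, identify the section complex via Proposition~\ref{Proposition-BanachSpaceSmoothPathsOfHoms}, apply Lemma~\ref{Lemma-DeformationOfContractibleHasZeroCohomology} to get $H^1(\End_{C^\infty(J')}(\mathcal{C}_{J'}))=0$, then invoke Proposition~\ref{Proposition-ClassOfG}, Banach-space integrability, and Proposition~\ref{Proposition-BundleOfComplexesParallelTransport}. You merely spell out the sign check and the role of boundedness more explicitly than the paper does.
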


\begin{proof}
Let $h: \mathcal{C}^\bullet \to \mathcal{C}^{\bullet - 1}$ be a continuous contracting homotopy.  Then the endomorphism complex $\End(\mathcal{C})$ is contractible, with homotopy
\[ H: \End(\mathcal{C})^\bullet \to \End(\mathcal{C})^{\bullet-1}, \qquad H(F) = h \circ F. \]  Suppose $\{ (\mathcal{C}_t^\bullet, d_t)\}_{t \in J}$ is a smooth deformation with $\mathcal{C}_0^\bullet = \mathcal{C}^\bullet$.  The complexes $\{ \End(\mathcal{C}_t) \}_{t \in J}$ form a smooth deformation of cochain complexes whose complex of sections is identified with $\End_{C^\infty(J)} (\mathcal{C}_J)$ by Proposition \ref{Proposition-BanachSpaceSmoothPathsOfHoms}.  Our assumption that the degree is bounded guarantees that $\End(\mathcal{C}_t)$ is a Banach space.  By Lemma \ref{Lemma-DeformationOfContractibleHasZeroCohomology}, $H^1(\End_{C^\infty(J')}(\mathcal{C}_{J'})) = 0$ for some subinterval $J'$.  From Proposition \ref{Proposition-ClassOfG}, the module $\mathcal{C}_{J'}^\bullet$ admits a connection that is a chain map.  Since we are working with Banach spaces, the connection is integrable and so $\{ \mathcal{C}_t^\bullet\}_{t \in J'}$ is trivial by Proposition \ref{Proposition-BundleOfComplexesParallelTransport}.
\end{proof}

Next we consider an application to homological perturbation theory.  We recall the construction of the mapping cone.  Given a chain map
\[ f: (\mathcal{C}^\bullet, d_{\mathcal{C}}) \to (\mathcal{D}^\bullet, d_{\mathcal{D}}) \] between cochain complexes, the mapping cone complex $(C_f^\bullet, \partial)$ is defined by
\[ C_{f}^\bullet = \mathcal{C}^{\bullet+1} \oplus \mathcal{D}^\bullet, \qquad \partial = \begin{bmatrix} -d_{\mathcal{C}} & 0\\ f & d_{\mathcal{D}} \end{bmatrix}. \]  If $C_f^\bullet$ is contractible, then it is easy to see that $f$ is a chain homotopy equivalence.  Indeed, one can extract the homotopy inverse as well as the homotopy operators from the contracting homotopy of $C_f^\bullet$.  The converse is true as well \cite{MR1909353}.

\begin{theorem}
Suppose $\{\mathcal{C}_t\}_{t \in J}$ and $\{\mathcal{D}_t\}_{t \in J}$ are smooth deformations of bounded cochain complexes of Banach spaces, and $\{f_t: \mathcal{C}_t \to \mathcal{D}_t\}_{t \in J}$ is a smooth family of continuous chain maps.  If $f_0$ is a chain homotopy equivalence, then there is a subinterval $J' \subset J$ containing $0$ such that $f_t$ is a chain homotopy equivalence for all $t \in J'$.  Moreover, the homotopy inverse and the homotopy operators can be chosen to depend smoothly on $t$.
\end{theorem}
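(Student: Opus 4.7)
The plan is to reduce the theorem to Theorem~\ref{Theorem-ContractibleComplexIsRigid} via the mapping cone construction. First, I would form the smooth deformation of mapping cones $\{C_{f_t}^\bullet\}_{t \in J}$, where $C_{f_t}^n = \mathcal{C}_t^{n+1} \oplus \mathcal{D}_t^n$ is built on fixed underlying Banach spaces (bounded in degree by hypothesis), and the boundary
\[ \partial_t = \begin{bmatrix} -d_{\mathcal{C}_t} & 0 \\ f_t & d_{\mathcal{D}_t} \end{bmatrix} \]
is a smooth family of continuous linear maps because each entry is. Its fiber at $t = 0$ is contractible precisely because $f_0$ is a chain homotopy equivalence, so Theorem~\ref{Theorem-ContractibleComplexIsRigid} produces a subinterval $J' \subset J$ containing $0$ on which $\{C_{f_t}^\bullet\}_{t \in J'}$ is trivial.

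By Proposition~\ref{Proposition-BundleOfComplexesParallelTransport}, triviality of the mapping cone deformation gives an integrable connection $\nabla$ on the complex of sections that is a chain map, whose parallel translation operators $P^\nabla_{s,t}: C_{f_s}^\bullet \to C_{f_t}^\bullet$ are isomorphisms of cochain complexes depending smoothly on both arguments (Theorem~\ref{Theorem-IntegrableConnections}). Next, I would fix any continuous linear contracting homotopy $h_0: C_{f_0}^\bullet \to C_{f_0}^{\bullet-1}$ and transport it, setting
\[ h_t := P^\nabla_{0,t} \circ h_0 \circ P^\nabla_{t,0}. \]
Since each $P^\nabla_{s,t}$ is a chain isomorphism, $h_t$ satisfies $\partial_t h_t + h_t \partial_t = 1$, and smoothness of $P^\nabla$ in both variables makes $\{h_t\}_{t \in J'}$ a smooth family of continuous linear maps.

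Finally, I would decompose $h_t$ as a $2 \times 2$ block matrix with respect to the fixed direct sum decomposition $C_{f_t}^n = \mathcal{C}_t^{n+1} \oplus \mathcal{D}_t^n$. Because the smooth $C^\infty(J')$-module structure on the section module of $\{C_{f_t}^\bullet\}$ respects this fixed direct sum, the four block entries of $h_t$ are themselves smooth families of continuous linear maps. Expanding $\partial_t h_t + h_t \partial_t = 1$ into its block components then yields, from the off-diagonal equation, a smooth family of chain maps $g_t: \mathcal{D}_t \to \mathcal{C}_t$, and from the two diagonal equations, smooth families of homotopy operators $k_t$ and $l_t$ with $g_t f_t - 1 = d_{\mathcal{C}_t} k_t + k_t d_{\mathcal{C}_t}$ and $f_t g_t - 1 = d_{\mathcal{D}_t} l_t + l_t d_{\mathcal{D}_t}$.

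The only substantive input is Theorem~\ref{Theorem-ContractibleComplexIsRigid}; the mapping cone formalism is what converts the rigidity of contractible complexes into a rigidity statement about chain homotopy equivalences. The potentially delicate step is the passage from a smooth family of nullhomotopies of $C_{f_t}^\bullet$ to smooth families of the individual homotopy data, but this is automatic once one recognizes that the direct sum decomposition of $C_{f_t}$ is independent of $t$ at the level of underlying spaces, so that the block projections are themselves parallel (in fact constant) families of continuous maps.
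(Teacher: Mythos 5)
Your proposal is correct and follows essentially the same route as the paper: apply Theorem~\ref{Theorem-ContractibleComplexIsRigid} to the deformation of mapping cones $\{C_{f_t}^\bullet\}$, conclude triviality on a subinterval, and extract a $C^\infty(J')$-linear contracting homotopy from which the homotopy inverse and homotopy operators are read off block-wise. The paper states this tersely (triviality gives a $C^\infty(J')$-linear contracting homotopy, hence smooth dependence on $t$), while you unpack the same content more explicitly via parallel translation and the block decomposition of the contracting homotopy; both steps you add are correct and merely make the paper's implicit reasoning visible.
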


\begin{proof}
By assumption, the mapping cone $C_{f_0}^\bullet$ is contractible, and so it is rigid by Theorem \ref{Theorem-ContractibleComplexIsRigid}.  Thus, the deformation $\{ C_{f_t}^\bullet\}_{t \in J'}$ is trivial for some subinterval $J' \subset J$.  So its complex of sections is isomorphic to $C^\infty(J', C_{f_0})$, which has a $C^\infty(J')$-linear contracting homotopy.  Thus each $C_{f_t}^\bullet$ is contractible in a way that depends smoothly on $t$.
\end{proof}

\section{The Gauss-Manin connection} \label{Section-GaussManinConnection}

\subsection{Gauss-Manin connection in periodic cyclic homology}

In this section, we'll construct Getzler's Gauss-Manin connection in our setting of smooth deformations.  Let $A_J$ denote the algebra of sections of a smooth one-parameter deformation of locally convex algebras $\{A_t\}_{t \in J}$.  Unless specified otherwise, all chain groups and homology groups associated to $A_J$ that follow are over the ground ring $C^\infty(J)$.

Consider the deformation of chain complexes $\{ (C_{\per}(A_t), b_t +B)\}_{t \in J}$.  As in Example \ref{Example-CyclicComplexOfDeformation}, we can identify its complex of sections with $(C_{\per}(A_J), b+B).$  We would like to show, under favorable circumstances, that this deformation of complexes is trivial at the level of homology.  To that end, we'd like to construct a connection on $C_{\per}(A_J)$ that is a chain map.  As described in Proposition \ref{Proposition-ClassOfG}, this is a problem in cohomology.  To start, let $\nabla$ be any connection on $A_J$, and let $E = \delta \nabla$ as in Proposition \ref{Proposition-ClassOfE}.  We extend $\nabla$ to the unitization $(A_J)_+$ (over $C^\infty(J)$) by $\nabla e = 0$, and then to $C_n(A_J)$ using Proposition~\ref{Proposition-TensorAndDualConnections}.  Then $\nabla$ extends to a connection on the periodic cyclic complex $C_{\per}(A_J)$, which is given by the Lie derivative $L_{\nabla}$.  From Proposition \ref{Proposition-ClassOfG}, $C_{\per}(A_J)$ has a connection that is a chain map if and only if the class of
\[ G := [b+B, L_\nabla] = L_E \]
vanishes in $H^1(\End(C_{\per}(A_J)))$, i.e. $L_E$ is chain homotopic to zero via a $C^\infty(J)$-linear homotopy operator.
But the Cartan Homotopy formula
\[ [b+B, I_E] = L_E \]
of Theorem~\ref{Theorem-CHF} implies exactly this.  Notice that $E$ is $C^\infty(J)$-linear, so $I_E$ is a $C^\infty(J)$-linear endomorphism of $C_{\per}(A_J)$.  We conclude that the \emph{Gauss-Manin connection}
\[ \nabla_{GM} = L_\nabla - I_E \]
is a connection on $C_{\per}(A)$ and a chain map.  Amazingly, the cohomological obstruction to the existence of such a connection vanishes for any deformation $\{A_t\}_{t \in J}$.

\begin{proposition}
The Gauss-Manin connection $\nabla_{GM}$ commutes with the differential $b+B$ and hence induces a connection on the $C^{\infty}(J)$-module $HP_{\bullet}(A_J)$.  Moreover, the induced connection on $HP_\bullet(A_J)$ is independent of the choice of connection $\nabla$ on $A_J$.
\end{proposition}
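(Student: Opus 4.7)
The plan is to deduce both assertions directly from the Cartan homotopy formula (Theorem~\ref{Theorem-CHF}) together with the commutator identities of Proposition~\ref{Proposition-LieDerivativeCommutator}. The common preliminary step is to verify that although $\nabla$ is only $\C$-linear on $A_J$ (and not a true element of $C^1_{C^\infty(J)}(A_J,A_J)$), the induced Lie derivative $L_\nabla$ defines a bona fide connection on the relative chain complex $C_\bullet(A_J)$ over $C^\infty(J)$. The Leibniz rule for $\nabla$, combined with the $C^\infty(J)$-balancing of the tensor product, gives $L_\nabla(f\cdot \omega) = f'\cdot\omega + f\cdot L_\nabla \omega$; the same computation extends Proposition~\ref{Proposition-LieDerivativeCommutator} to yield $[b, L_\nabla] = L_{\delta \nabla} = L_E$ and $[B, L_\nabla] = 0$.

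For the first claim, these identities combine to give $[b+B, L_\nabla] = L_E$. Since $E$ is a genuine $C^\infty(J)$-bilinear Hochschild cocycle (the equation $\delta E = 0$ is recorded in the discussion preceding Proposition~\ref{Proposition-ClassOfE}), Theorem~\ref{Theorem-CHF} yields $[b+B, I_E] = L_E - I_{\delta E} = L_E$. Subtracting, $[b+B, \nabla_{GM}] = 0$, which is precisely the assertion that $\nabla_{GM}$ is a chain map; it therefore descends to a connection on $HP_\bullet(A_J)$.

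For the independence statement, let $\nabla,\nabla'$ be two connections on $A_J$ and set $D := \nabla' - \nabla$. The Leibniz rule forces $D$ to be $C^\infty(J)$-linear, hence $D \in C^1_{C^\infty(J)}(A_J,A_J)$ is a genuine Hochschild $1$-cochain, and a direct calculation gives $E' = E + \delta D$. Expanding and cancelling,
\[ \nabla'_{GM} - \nabla_{GM} = (L_\nabla + L_D) - (I_E + I_{\delta D}) - (L_\nabla - I_E) = L_D - I_{\delta D}. \]
A final application of Theorem~\ref{Theorem-CHF} identifies the right-hand side as $[b+B, I_D]$. Since $I_D$ is a $C^\infty(J)$-linear endomorphism of $C_{\per}(A_J)$, this is a $C^\infty(J)$-linear chain homotopy between the two Gauss--Manin connections, so they induce the same connection on $HP_\bullet(A_J)$.

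The only subtle point is the bookkeeping in the first paragraph: one must be careful that $L_\nabla$ makes sense on $C_\bullet(A_J)$ (whose tensor factors live over $C^\infty(J)$ rather than $\C$), that the Leibniz rule upgrades it to a $C^\infty(J)$-connection, and that the commutator identities with $b$ and $B$ survive the passage from honest Hochschild cochains to the derivation-like operator $\nabla$. Once these compatibilities are in hand, the proposition follows from two essentially formal invocations of the Cartan homotopy formula.
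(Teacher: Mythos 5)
Your proof is correct and is essentially the same as the paper's: both establish that $[b+B,L_\nabla]=L_E$, cancel this against the Cartan homotopy formula $[b+B,I_E]=L_E$, and then show independence by writing $\nabla'_{GM}-\nabla_{GM}=L_D-I_{\delta D}=[b+B,I_D]$ via a second application of the Cartan formula. The only cosmetic difference is that you spell out the Leibniz-rule bookkeeping for $L_\nabla$ inline, whereas the paper handles that in the discussion immediately preceding the proposition.
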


\begin{proof}
We have already established the first claim.  For another connection $\nabla',$ let \[ \nabla'_{GM} = L_{\nabla'} - I_{E'} \] be the corresponding Gauss-Manin connection.  Then \[ \nabla' -\nabla = F, \qquad E' - E = \delta F \]
for some $C^\infty(J)$-linear map $F: A_J \to A_J$.  Thus,
\[ \nabla_{GM}' - \nabla_{GM} = L_F - I_{\delta F} = [b+B, I_F], \]
by Theorem~\ref{Theorem-CHF}.  We conclude that the Gauss-Manin connection is unique up to continuous $C^\infty(J)$-linear chain homotopy.
\end{proof}

\begin{corollary} \label{Corollary-NablaGMOnTrivialDeformation}
If $A$ admits a connection $\nabla$ which is also a derivation, then the Gauss-Manin connection on $HP_{\bullet}(A)$ is given by \[ \nabla_{GM}[\omega] = [L_\nabla \omega]. \]
\end{corollary}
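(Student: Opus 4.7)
The plan is essentially a direct computation. First I would observe that if the chosen connection $\nabla$ on $A_J$ is itself a derivation for the product, then the Hochschild $2$-cochain
\[ E(a_1,a_2) = \nabla(a_1)a_2 + a_1\nabla(a_2) - \nabla(a_1 a_2) \]
vanishes identically, since by definition this quantity measures exactly the defect of $\nabla$ from being a derivation. The extension of $\nabla$ to the unitization $(A_J)_+$ by $\nabla e = 0$ preserves the derivation property (the Leibniz rule involving $e$ holds trivially), so the extended $2$-cochain $\delta \nabla$ is still identically zero.

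Substituting $E = 0$ into the chain-level definition $\nabla_{GM} = L_\nabla - I_E$ yields $\nabla_{GM} = L_\nabla$ on $C_{\per}(A_J)$. Passing to homology gives $\nabla_{GM}[\omega] = [L_\nabla \omega]$, as claimed. That this formula really computes the Gauss-Manin connection on $HP_\bullet(A_J)$ unambiguously --- and not merely one representative up to continuous $C^\infty(J)$-linear chain homotopy --- is automatic from the preceding proposition, which establishes independence of the induced connection from the auxiliary choice of $\nabla$.

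The main obstacle is nonexistent: the corollary is essentially a tautology once one recognizes that the statement ``$\nabla$ is a derivation'' is literally the statement ``$\delta\nabla = 0$'' in the Hochschild cochain complex $C^\bullet_{C^\infty(J)}(A_J,A_J)$. The conceptual content is only that, in this favorable case, the cyclic contraction correction term $I_E$ required by the Cartan homotopy formula (Theorem~\ref{Theorem-CHF}) disappears, and the Gauss-Manin connection on homology reduces to the Lie derivative along the algebraic derivation $\nabla$.
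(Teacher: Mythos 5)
Your proof is correct and follows exactly the route implicit in the paper: since $\nabla$ is a derivation, $E=\delta\nabla=0$, so the correction term $I_E$ in the definition $\nabla_{GM}=L_\nabla-I_E$ vanishes and the formula follows immediately. The paper states this as a corollary without further argument, and your write-up supplies the expected one-line justification along with the correct observations about the unitization and about well-definedness up to the choice of $\nabla$.
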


As a trivial example, we see that the Gauss-Manin connection associated to a constant deformation is just the usual differentiation $\frac{d}{dt}$.

The Gauss-Manin connection is a canonical choice of a connection on $HP_\bullet(A_J)$.  It is natural in the sense that morphisms of deformations induce parallel maps at the level of periodic cyclic homology.

\begin{proposition}[Naturality of $\nabla_{GM}$] \label{Proposition-NaturalityOfNablaGM}
Let $A_J$ and $B_J$ denote the algebras of sections of two deformations over the same parameter space $J$, and let $F: A_J \to B_J$ be a morphism of deformations.  Then the following diagram commutes.
\[ \xymatrix{
HP_{\bullet}(A_J) \ar[r]^-{F_*} \ar[d]^-{\nabla_{GM}} &HP_{\bullet}(B_J) \ar[d]^-{\nabla_{GM}}\\
HP_{\bullet}(A_J) \ar[r]^-{F_*}                      &HP_{\bullet}(B_J)\\
} \]
\end{proposition}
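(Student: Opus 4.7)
The plan is to establish naturality at the chain level by producing a continuous $C^\infty(J)$-linear operator $H: C_{\per}(A_J) \to C_{\per}(B_J)$ such that
\[ F \circ \nabla_{GM}^A - \nabla_{GM}^B \circ F = [b+B, H], \]
where I abuse notation by writing $F$ also for the chain map $C_{\per}(A_J) \to C_{\per}(B_J)$ induced by the homomorphism. Passing to homology then yields the stated commutative diagram.

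To begin, I would choose arbitrary connections $\nabla_A$ on $A_J$ and $\nabla_B$ on $B_J$, yielding chain-level representatives $\nabla_{GM}^A = L_{\nabla_A} - I_{E_A}$ and $\nabla_{GM}^B = L_{\nabla_B} - I_{E_B}$, where $E_A = \delta \nabla_A$ and $E_B = \delta \nabla_B$. Define the defect
\[ \Phi := F \circ \nabla_A - \nabla_B \circ F: A_J \to B_J. \]
The Leibniz rules for $\nabla_A, \nabla_B$ combined with the $C^\infty(J)$-linearity of $F$ imply $\Phi$ is itself $C^\infty(J)$-linear. Since $F$ is an algebra homomorphism, $B_J$ becomes an $A_J$-bimodule through $F$, and $\Phi$ is then a Hochschild $1$-cochain on $A_J$ with values in this bimodule. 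A direct expansion using $E_A(a_1, a_2) = \nabla_A(a_1) a_2 + a_1 \nabla_A(a_2) - \nabla_A(a_1 a_2)$ and the homomorphism property of $F$ gives
\[ F(E_A(a_1, a_2)) - E_B(F(a_1), F(a_2)) = F(a_1) \Phi(a_2) - \Phi(a_1 a_2) + \Phi(a_1) F(a_2), \]
which is precisely the bimodule Hochschild coboundary $\delta \Phi$ of $\Phi$.

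Next, I would perform the chain-level commutator calculation. Introduce ``$F$-twisted'' operators $L_\Phi^F, I_\Phi^F, I_{\delta \Phi}^F: C_\bullet(A_J) \to C_\bullet(B_J)$ defined by the same combinatorial formulas as the ordinary $L$, $I$ operations, but with $F$ applied to every un-inserted algebra slot and with all products performed in $B_J$. Straightforward evaluation on elementary tensors then gives
\[ F \circ L_{\nabla_A} - L_{\nabla_B} \circ F = L_\Phi^F, \qquad F \circ I_{E_A} - I_{E_B} \circ F = I_{\delta \Phi}^F, \]
so that $F \circ \nabla_{GM}^A - \nabla_{GM}^B \circ F = L_\Phi^F - I_{\delta \Phi}^F$.

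The final step is to invoke a bimodule-coefficient version of the Cartan homotopy formula applied to $\Phi$. The algebraic proof of Theorem \ref{Theorem-CHF} carries over verbatim to cochains with values in a bimodule, yielding
\[ [b+B, I_\Phi^F] = L_\Phi^F - I_{\delta \Phi}^F, \]
so taking $H = I_\Phi^F$ completes the argument, since $I_\Phi^F$ is $C^\infty(J)$-linear whenever $\Phi$ is. The main obstacle is really just a bookkeeping issue: one must verify that the combinatorial identities underlying Theorem \ref{Theorem-CHF} depend only on the bimodule structure on the coefficient algebra and not on its multiplicative structure, so the same proof applies with $A_J$ replaced throughout by the $A_J$-bimodule $B_J$.
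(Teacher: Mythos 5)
Your proof is correct and lands on essentially the same homotopy operator as the paper, though you arrive at it and describe it differently. The paper sets $h = F_* I_{\nabla^A} - I_{\nabla^B} F_*$, applies the Cartan homotopy formula of Theorem~\ref{Theorem-CHF} to each contraction $I_{\nabla^A}$, $I_{\nabla^B}$ (legitimate over the ground ring $\C$, where the connections $\nabla^A$, $\nabla^B$ are honest $1$-cochains), and computes $[b+B, h] = F_*\nabla_{GM}^A - \nabla_{GM}^B F_*$ directly. Since $I_{\nabla^A}$ and $I_{\nabla^B}$ live only on the $\C$-linear complexes, the paper then has to argue (invoking the Leibniz rule) that the difference $h$ descends to the $C^\infty(J)$-linear quotient. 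If you expand $h$ on elementary tensors, you'll find that it is literally your operator $I_\Phi^F$ with $\Phi = F\nabla_A - \nabla_B F$; so your approach and the paper's produce the identical homotopy.

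What you do differently is to flip the order of concerns: you first form the $C^\infty(J)$-linear cochain $\Phi$ and build $H = I_\Phi^F$ directly over $C^\infty(J)$, sidestepping the descent argument entirely. The price is that you need a Cartan homotopy formula for a cochain valued in the $A_J$-bimodule $B_J$ (via $F$) together with the twisted operators $L^F$, $I^F$. You assert this version ``carries over verbatim,'' but this is a bit more than pure bookkeeping: one must also track that the mixed commutator $[b+B, I_\Phi^F]$ pairs $b_{A_J}$ on the right with $b_{B_J}$ on the left, and the multiplication of $B_J$ (not just its bimodule structure) is used whenever $b_{B_J}$ or $B$ acts on the output. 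The shortest honest proof of that bimodule identity is exactly the paper's manipulation: verify that $I_\Phi^F = F_*I_{\nabla^A} - I_{\nabla^B}F_*$ (a pointwise check on elementary tensors, using $F(e)=e$ and $\nabla(e)=0$), then compute $[b+B, F_*I_{\nabla^A} - I_{\nabla^B}F_*]$ using the ordinary $\C$-linear Theorem~\ref{Theorem-CHF} and the fact that $F_*$ is a chain map. So each approach contains the missing step of the other: the paper omits the descent verification, and you omit the proof of the bimodule Cartan formula — and these two gaps are the same gap, dressed differently. Once either one is filled in, both arguments are complete; your version has the cosmetic advantage of never leaving the $C^\infty(J)$-linear world.
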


\begin{proof}
Let $\nabla^A$ and $\nabla^B$ denote connections on $A_J$ and $B_J$ with respective cocycles $E^A$ and $E^B$, and let $F_*: C_{\per}(A_J) \to C_{\per}(B_J)$ be the induced map of complexes.  For
\[ h = F_* I_{\nabla^A} - I_{\nabla^B} F_*, \] we have
\begin{align*}
[b+B, h] & = F_* [b+B, I_{\nabla^A}] - [b+B, I_{\nabla^B}] F_*\\
&= F_* (L_{\nabla^A} - I_{E^A}) - (L_{\nabla^B} - I_{E^B}) F_*\\
&= F_* \nabla_{GM}^A - \nabla_{GM}^B F_*.
\end{align*} This shows that the diagram commutes up to continuous chain homotopy.  The problem is that $I_{\nabla^A}$ and $I_{\nabla^B}$ are not well-defined operators on the complexes $C_{\per}(A_J)$ and $C_{\per}(B_J)$ respectively (over $C^\infty(J)$), because $\nabla^A$ and $\nabla^B$ are not $C^\infty(J)$-linear operators.  However, one can show that thanks to the Leibniz rule, $h$ descends to a map of quotient complexes such that the following diagram
\[ \xymatrix{
C_{\per}^{\C}(A_J) \ar[r]^-h \ar[d]^-\pi &C_{\per}^{\C}(B_J) \ar[d]^-\pi\\
C_{\per}^{C^{\infty}(J)}(A_J) \ar[r]^-{\bar{h}} &C_{\per}^{C^{\infty}(J)}(B_J)
} \]
commutes, and consequently $[b+B, \bar{h}] = F_* \nabla_{GM}^A - \nabla_{GM}^B F_*$ as desired.
\end{proof}

As a simple application of Proposition~\ref{Proposition-NaturalityOfNablaGM}, we get a proof of the differentiable homotopy invariance property of periodic cyclic homology by considering morphisms between constant deformations.

\begin{corollary}[Homotopy Invariance]
Let $A$ and $B$ be locally convex algebras and let $\{F_t: A \to B\}_{t \in J}$ be a smooth family of algebra maps.  Then the induced map
\[ (F_t)_*: HP_{\bullet}(A) \to HP_{\bullet}(B) \] is independent of $t$.
\end{corollary}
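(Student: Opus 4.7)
The plan is to apply naturality of the Gauss--Manin connection to the morphism of \emph{constant} deformations induced by $\{F_t\}_{t \in J}$.  First I would set $A_J := C^\infty(J,A)$ and $B_J := C^\infty(J,B)$, viewed as the algebras of sections of the constant deformations with fibers $A$ and $B$.  The hypothesis that $\{F_t\}$ is a smooth family of algebra maps means precisely that it induces a continuous $C^\infty(J)$-linear algebra homomorphism $F : A_J \to B_J$, i.e.\ a morphism of deformations.  On each of $A_J$ and $B_J$ the operator $\tfrac{d}{dt}$ is a connection that is simultaneously a derivation, so Corollary~\ref{Corollary-NablaGMOnTrivialDeformation} identifies the Gauss--Manin connections on $HP_{\bullet}(A_J)$ and $HP_{\bullet}(B_J)$ with the operators induced at the chain level by $L_{d/dt}$.

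Given $[\alpha] \in HP_{\bullet}(A)$ with cycle representative $\alpha \in C_{\per}(A)$, I would form the constant section $\tilde{\alpha} \in C_{\per}(A_J)$ obtained from $\alpha$ under the identification $C_{\per}(A_J) \cong C^\infty(J, C_{\per}(A))$ coming from Proposition~\ref{Proposition-TensorProductOfFreeModules} (see Example~\ref{Example-CyclicComplexOfDeformation}).  Since $\tilde{\alpha}$ is a cycle and $L_{d/dt}\tilde{\alpha} = 0$, the class $[\tilde{\alpha}] \in HP_{\bullet}(A_J)$ is $\nabla_{GM}$-parallel.  By Proposition~\ref{Proposition-NaturalityOfNablaGM}, $F_*[\tilde{\alpha}] = [F(\tilde{\alpha})] \in HP_{\bullet}(B_J)$ is also $\nabla_{GM}$-parallel.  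Unpacking this through the chain homotopy $\bar{h}$ produced in the proof of that proposition, evaluated at $\tilde{\alpha}$, yields an explicit $\eta \in C_{\per}(B_J)$ with
\[ L_{d/dt}\,F(\tilde{\alpha}) \;=\; (b+B)\eta. \]

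To conclude, I would evaluate at $t$ and integrate.  The evaluation maps $\epsilon_t : C_{\per}(B_J) \to C_{\per}(B)$ are chain maps satisfying $\epsilon_t \circ L_{d/dt} = \tfrac{d}{dt} \circ \epsilon_t$, and by construction $\epsilon_t F(\tilde{\alpha}) = (F_t)_{*}\alpha$.  The fundamental theorem of calculus in the locally convex space $C_{\per}(B)$ then gives
\[ (F_t)_{*}\alpha \;-\; (F_s)_{*}\alpha \;=\; \int_s^t \frac{d}{du}\bigl(\epsilon_u F(\tilde{\alpha})\bigr)\, du \;=\; (b+B)\!\int_s^t \epsilon_u \eta\, du, \]
which is a boundary in $C_{\per}(B)$.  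Hence $(F_t)_{*}[\alpha] = (F_s)_{*}[\alpha]$ in $HP_{\bullet}(B)$.  The one step requiring a little care is the identification $C_{\per}(A_J) \cong C^\infty(J,C_{\per}(A))$ (and similarly for $B$) together with the compatibility of $\epsilon_t$ with $L_{d/dt}$; but this is a routine consequence of Proposition~\ref{Proposition-TensorProductOfFreeModules} and the good behavior of $\potimes$ against countable products for the nuclear Fr\'echet algebra $C^\infty(J)$, so I expect no essential obstacle there.
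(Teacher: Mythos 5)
Your proposal follows essentially the same route as the paper's own argument: view $\{F_t\}$ as a morphism of constant deformations, note that $\nabla_{GM}$ becomes $\frac{d}{dt}$ on both sides (via Corollary~\ref{Corollary-NablaGMOnTrivialDeformation}, which the paper uses implicitly), apply naturality (Proposition~\ref{Proposition-NaturalityOfNablaGM}) to the parallel constant section, and integrate via the fundamental theorem of calculus. The reasoning is correct and matches the paper's proof in all essentials.
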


\begin{proof}
Let $A_J = C^{\infty}(J, A)$ and $B_J = C^{\infty}(J, B)$ be the algebras of sections corresponding to the constant deformations over $J$ with fiber $A$ and $B$ respectively.  Then $\{ F_t:A \to B \}_{t \in J}$ is a morphism between these constant deformations.  Let $F: A_J \to B_J$ be the induced $C^\infty(J)$-linear algebra map
\[ F(a)(t) = F_t(a(t)). \]  Using the canonical connection $\frac{d}{dt}$ on both $A_J$, we see that $\nabla_{GM}$ is given by $\frac{d}{dt}$ under the identification $C_{\per}(A_J) \cong C^\infty(J, C_{\per}^{\C}(A))$, and similarly for $B$.  Given a cycle $\omega \in C_{\per}^{\C}(A)$, we view it as a ``constant" cycle in $C_{\per}(A_J),$ and then Proposition~\ref{Proposition-NaturalityOfNablaGM} implies that
\[ \left[ \frac{d}{dt} F_t (\omega) \right] = \left[ F_t \left(\frac{d\omega}{dt}\right) \right] = 0 \]  in $HP(B_J)$.  So there is $\eta \in C^\infty(J,C_{\per}^{\C}(B))$ such that
\[ \frac{d}{dt} F_t(\omega) = (b+B)(\eta(t)).\]  But, by the fundamental theorem of calculus,
\[ F_t(\omega) - F_s(\omega) = \int_s^t(b+B)(\eta(u)) du = (b+B)\Bigg( \int_{s}^{t} \eta(u) du \Bigg) \] for any $s,t \in J$.  Hence $[F_t(\omega)] = [F_s(\omega)]$ in $HP_\bullet(B)$.
\end{proof}

\subsection{Dual Gauss-Manin connection}
We define $\nabla^{GM}$ on $C^{\per}(A_J)$ to be the dual connection of $\nabla_{GM}$ as in Proposition~\ref{Proposition-TensorAndDualConnections}.  In terms of the canonical pairing,
\[ \langle \nabla^{GM}\phi, \omega \rangle = \frac{d}{dt} \langle \phi, \omega \rangle - \langle \phi, \nabla_{GM} \omega \rangle. \]  It is straightforward to verify that $\nabla^{GM}$ commutes with $b+B$ and therefore induces a connection on $HP^\bullet(A_J)$.  The connections $\nabla_{GM}$ and $\nabla^{GM}$ satisfy \[ \frac{d}{dt} \langle [\phi], [\omega] \rangle = \langle \nabla^{GM}[\phi], [\omega] \rangle + \langle [\phi], \nabla_{GM} [\omega] \rangle, \]  for all $[\phi] \in HP^\bullet(A_J)$ and $[\omega] \in HP_{\bullet}(A_J).$

\subsection{Interaction with the Chern character}

The algebra $A_J$ can be viewed as an algebra over $\C$ or $C^\infty(J)$, and there is a surjective morphism of complexes
\[ \pi: C_{\per}^\C(A_J) \to C_{\per}^{C^\infty(J)}(A_J). \]

\begin{proposition}
If $\omega \in C_{\per}^{C^{\infty}(J)}(A_J)$ is a cycle that lifts to a cycle $\tilde{\omega} \in C_{\per}^{\C}(A_J)$, then $\nabla_{GM} [\omega] = 0$ in $HP_{\bullet}^{C^{\infty}(J)}(A_J)$.
\end{proposition}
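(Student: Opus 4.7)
The plan is to exploit the Cartan homotopy formula at the level of the $\C$-linear cyclic complex and then push the resulting null-homotopy down to the $C^\infty(J)$-linear quotient via $\pi$. The key point is that while the Gauss-Manin connection $\nabla_{GM} = L_\nabla - I_E$ is only well defined on $C_{\per}^{C^\infty(J)}(A_J)$ (because the cocycle $E = \delta \nabla$ is $C^\infty(J)$-bilinear and $L_\nabla$ descends by the Leibniz rule), the individual cyclic contraction $I_\nabla$ makes sense on $C_{\per}^{\C}(A_J)$ even though $\nabla$ is not $C^\infty(J)$-linear. This is precisely the same obstruction that appears in the naturality proof, and it will be resolved in the same way.

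The computation I have in mind goes as follows. Fix a connection $\nabla$ on $A_J$ and extend it as in the definition of $\nabla_{GM}$, so that both $L_\nabla$ and $I_E$ are well-defined operators on $C_{\per}^{\C}(A_J)$, and both commute with the projection $\pi: C_{\per}^{\C}(A_J) \to C_{\per}^{C^\infty(J)}(A_J)$. By Theorem~\ref{Theorem-CHF} applied at the $\C$-linear level,
\[ L_\nabla - I_E = [b+B, I_\nabla] \]
as operators on $C_{\per}^{\C}(A_J)$. Given the cycle $\tilde\omega \in C_{\per}^{\C}(A_J)$ with $(b+B)\tilde\omega = 0$ and $\pi(\tilde\omega) = \omega$, I apply this identity to $\tilde\omega$:
\[ (L_\nabla - I_E)\tilde\omega = (b+B)\,I_\nabla \tilde\omega - I_\nabla (b+B)\tilde\omega = (b+B)\,I_\nabla\tilde\omega. \]
Then I push forward by $\pi$, which commutes with $b+B$ and with both $L_\nabla$ and $I_E$, to obtain
\[ \nabla_{GM}\,\omega = \pi\bigl((L_\nabla - I_E)\tilde\omega\bigr) = (b+B)\,\pi(I_\nabla \tilde\omega) \]
in $C_{\per}^{C^\infty(J)}(A_J)$, so $[\nabla_{GM}\omega] = 0$ in $HP_\bullet^{C^\infty(J)}(A_J)$.

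The only step requiring genuine care is the claim that $\pi \circ L_\nabla = L_\nabla \circ \pi$ and $\pi \circ I_E = I_E \circ \pi$, i.e.\ that the formulas for the two operators really do descend across the quotient defining the $C^\infty(J)$-linear tensor product. For $I_E$ this is immediate from $C^\infty(J)$-bilinearity of $E$; for $L_\nabla$ it is the Leibniz rule
\[ \nabla(f \cdot a) = f'\cdot a + f\cdot \nabla a \]
together with $\nabla e = 0$, which together guarantee that $L_\nabla$ of a relation of the form $(r\cdot a_i)\otimes a_{i+1} - a_i \otimes (r\cdot a_{i+1})$ is itself such a relation (the $f'$-terms cancel between adjacent slots). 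I expect this to be essentially the same bookkeeping used implicitly in constructing $\nabla_{GM}$ in the first place, so the argument should go through without further obstruction. The content of the proposition is thus the observation that a lift $\tilde\omega$ to the $\C$-linear complex allows one to apply Cartan honestly at the cochain level and transport the resulting exact term back down, giving a concrete primitive $\pi(I_\nabla \tilde\omega)$ for $\nabla_{GM}\omega$.
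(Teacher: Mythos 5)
Your proof is correct and follows essentially the same line as the paper's: both apply the Cartan homotopy formula $[b+B, I_\nabla] = L_\nabla - I_E$ on the $\C$-linear complex $C_{\per}^\C(A_J)$, where $I_\nabla$ is legitimately defined, and then use the fact that $L_\nabla - I_E$ intertwines with the quotient map $\pi$ to conclude that $\nabla_{GM}$ annihilates any class in the image of $\pi_*$. The paper phrases this at the level of induced maps on homology, whereas you evaluate on the representative $\tilde\omega$ and thereby exhibit the explicit primitive $\pi(I_\nabla\tilde\omega)$; this is a cosmetic difference, not a different method.
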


\begin{proof}
Let $\nabla_{GM}^{\C} = L_{\nabla} - I_E$, viewed as a linear operator on $C_{\per}^{\C}(A_J)$.  By Theorem~\ref{Theorem-CHF}, \[ \nabla_{GM}^\C = L_{\nabla} - I_{\delta \nabla} = [b+B, I_{\nabla}] \] and so $\nabla_{GM}^\C$ is the zero operator on $HP_{\bullet}^{\C}(A_J)$.  Thus, at the level of homology, we have
\[ \nabla_{GM} \circ \pi = \pi \circ \nabla_{GM}^\C = 0 \] where $\pi: HP_\bullet^\C(A_J) \to HP_\bullet^{C^{\infty}(J)}(A_J)$ is the map induced by the quotient map.  By hypothesis, $[\omega]$ is in the image of $\pi$.
\end{proof}

Note that the homotopy used in the previous proof does not imply that $\nabla_{GM}$ is zero on $HP_{\bullet}^{C^{\infty}(J)}(A_J)$.  The reason is that the operator $I_{\nabla}$ is not a well-defined operator on the quotient complex $C_{\per}^{C^{\infty}(J)}(A_J)$.

\begin{theorem} \label{Theorem-ChPChUAreParallel}
If $P \in M_N(A_J)$ is an idempotent and $U \in M_N(A_J)$ is an invertible, then \[ \nabla_{GM}[\ch P] = 0, \qquad \nabla_{GM}[\ch U] = 0 \] in $HP_\bullet(A_J)$.
\end{theorem}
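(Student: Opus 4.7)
The plan is to invoke the preceding proposition, which reduces the theorem to exhibiting $\C$-linear cycle lifts of $\ch P$ and $\ch U$ in $C_{\per}^{\C}(A_J)$. More precisely, it suffices to produce elements $\widetilde{\ch P}, \widetilde{\ch U} \in C_{\per}^{\C}(A_J)$ that are cycles for $b+B$ and that project, under the canonical surjection $\pi: C_{\per}^{\C}(A_J) \to C_{\per}^{C^\infty(J)}(A_J)$, to $\ch P$ and $\ch U$ respectively; the preceding proposition then immediately gives $\nabla_{GM}[\ch P] = 0$ and $\nabla_{GM}[\ch U] = 0$.

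For a scalar idempotent $P \in A_J$, the defining formula
\[(\ch P)_{2n} = (-1)^n \tfrac{(2n)!}{n!}\bigl(P^{\otimes(2n+1)} - \tfrac{1}{2}\, e \otimes P^{\otimes 2n}\bigr)\]
is assembled entirely from tensor powers of $P$ together with a single occurrence of the unit $e \in (A_J)_+$, and refers in no way to the scalar ring. Reinterpreting the same symbolic formula with $\otimes = \otimes_\C$ produces an element $\widetilde{\ch P} \in C_{\per}^{\C}(A_J)$ which manifestly satisfies $\pi(\widetilde{\ch P}) = \ch P$. The cycle identity $(b+B)\widetilde{\ch P} = 0$ is a purely algebraic consequence of $P^2 = P$ and associativity of the multiplication in $A_J$, and is therefore independent of the ground ring; the same direct verification that establishes $(b+B)\ch P = 0$ in $C_{\per}^{C^\infty(J)}(A_J)$ applies verbatim in $C_{\per}^{\C}(A_J)$. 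For a matrix idempotent $P \in M_N(A_J)$ one carries out the construction inside $C_{\per}^{\C}(M_N(A_J))$ and then applies the generalized trace $T$; since $T$ is defined on chains by the same linear formula regardless of the ground ring, intertwines $b+B$ in both settings, and commutes with $\pi$, the element $T(\widetilde{\ch P})$ is the required $\C$-linear cycle lift of $\ch P \in C_{\per}^{C^\infty(J)}(A_J)$.

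The invertible case is formally identical: the odd chain $(\ch U)_{2n+1} = (-1)^n n!\, U^{-1} \otimes U \otimes \cdots \otimes U^{-1} \otimes U$ admits an evident $\C$-linear lift, and the identity $(b+B)\ch U = 0$ relies only on $UU^{-1} = U^{-1}U = e$ together with associativity, neither of which is sensitive to the base ring. Applying $T$ to pass from $M_N(A_J)$ to $A_J$ and invoking the preceding proposition then yields $\nabla_{GM}[\ch U] = 0$. I do not expect a genuine obstruction in this argument: everything reduces to the base-ring insensitivity of the Chern character formulas, the generalized trace, and the relevant cycle identities. The only step deserving a moment of care, and hence the main thing to keep honest, is the verification that each of these constructions is compatible with the quotient map $\pi$, which ultimately follows from the universal property characterizing $\potimes_{C^\infty(J)}$ as a quotient of $\potimes_\C$.
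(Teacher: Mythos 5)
Your proposal is correct and matches the paper's proof exactly: the paper likewise applies the preceding proposition, observing that the Chern character formulas define $\C$-linear cycles in $C_{\per}^{\C}(A_J)$ which map under $\pi$ to $\ch P$ and $\ch U$ in $C_{\per}^{C^\infty(J)}(A_J)$. You have simply unpacked the (one-line) argument in more detail, including the compatibility of the generalized trace $T$ with $\pi$ in the matrix case, which the paper leaves implicit.
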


\begin{proof} This is immediate from the previous proposition because the cycle $\ch P \in C_{\per}^\C(A_J)$ is a lift of the cycle $\ch P \in C_{\per}^{C^\infty(J)}(A_J)$, and similarly for $\ch U$.
\end{proof}

Combining this with the identity
\[ \frac{d}{dt} \langle [\phi], [\omega] \rangle = \langle \nabla^{GM}[\phi], [\omega] \rangle + \langle [\phi], \nabla_{GM}[\omega] \rangle, \] we obtain the following differentiation formula for the pairing between $K$-theory and periodic cyclic cohomology.

\begin{corollary}
If $P \in M_N(A_J)$ is an idempotent and $U \in M_N(A_J)$ is an invertible, then
\[ \frac{d}{dt} \langle [\phi], [P] \rangle = \langle \nabla^{GM}[\phi], [P] \rangle, \qquad \frac{d}{dt} \langle [\phi], [U] \rangle = \langle \nabla^{GM}[\phi], [U] \rangle. \]
\end{corollary}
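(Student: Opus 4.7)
The plan is to deduce this as a short formal consequence of Theorem~\ref{Theorem-ChPChUAreParallel} together with the Leibniz-type compatibility between $\nabla^{GM}$, $\nabla_{GM}$, and the canonical pairing. First, I would unfold the definition of the pairing between $K$-theory and periodic cyclic cohomology from the Chern character subsection: by construction,
\[ \langle [\phi], [P] \rangle = \langle [\phi], [\ch P] \rangle, \qquad \langle [\phi], [U] \rangle = \langle [\phi], [\ch U] \rangle, \]
where now both sides are regarded as elements of $C^\infty(J)$ under the $C^\infty(J)$-linear pairing $HP^\bullet(A_J) \times HP_\bullet(A_J) \to C^\infty(J)$.

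Next, I would substitute $[\omega] = [\ch P]$ (respectively $[\omega] = [\ch U]$) into the compatibility identity
\[ \frac{d}{dt} \langle [\phi], [\omega] \rangle = \langle \nabla^{GM}[\phi], [\omega] \rangle + \langle [\phi], \nabla_{GM}[\omega] \rangle, \]
which was set up in the subsection on the dual Gauss-Manin connection. By Theorem~\ref{Theorem-ChPChUAreParallel}, the classes $[\ch P]$ and $[\ch U]$ are parallel with respect to $\nabla_{GM}$, so the second term on the right vanishes in both cases. What remains is precisely the pair of identities asserted in the corollary.

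I do not expect any genuine obstacle; the argument is essentially bookkeeping once the two ingredients are in place. The only small point worth noting is that the Chern character classes $[\ch P]$ and $[\ch U]$ must be interpreted as well-defined classes in the $C^\infty(J)$-linear theory $HP_\bullet(A_J)$, but this is immediate: the explicit formulas for $\ch P$ and $\ch U$ make sense in any algebra over any ground ring, and applied to the $C^\infty(J)$-algebra $M_N(A_J) \cong M_N(\C) \otimes A_J$ they yield cycles in $C_{\per}^{C^\infty(J)}(A_J)$ after composing with the generalized trace, exactly as in the Chern character subsection.
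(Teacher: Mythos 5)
Your proposal is correct and takes exactly the same route as the paper: substitute $[\omega] = [\ch P]$ (resp.\ $[\ch U]$) into the compatibility identity between $\nabla^{GM}$, $\nabla_{GM}$, and the canonical pairing, and invoke Theorem~\ref{Theorem-ChPChUAreParallel} to kill the $\nabla_{GM}$ term. The side remark about the Chern character classes living in the $C^\infty(J)$-linear theory is the right thing to observe and matches what the paper implicitly uses.
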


\begin{remark} Proposition~\ref{Proposition-NaturalityOfNablaGM} can be used to give another proof that
\[ \nabla_{GM}[\ch P] = 0 \] when $P \in A_J$ is an idempotent.  Indeed, an idempotent in $A_J$ is equivalent to a morphism of deformations \[ \{ F_t: \C \to A_t\}_{t \in J} \]  from the constant deformation with fiber $\C$.  The induced algebra map
\[ F: C^{\infty}(J, \C) \to A_J \] sends $1$ to $P$.  Applying Proposition~\ref{Proposition-NaturalityOfNablaGM}, we see
\[ \nabla_{GM}[\ch P] = \nabla_{GM} F [\ch 1] = F \frac{d}{dt} [\ch 1] = 0. \]
\end{remark}

\subsection{Integrating $\nabla_{GM}$}

The very fact that $\nabla_{GM}$ exists for all smooth one-parameter deformations implies that the problem of proving $\nabla_{GM}$ is integrable cannot be attacked with methods that are too general.  Indeed, one cannot expect periodic cyclic homology to be rigid for all deformations, there are plenty of finite dimensional examples for which it is not.

\begin{example} \label{Example-CyclicHomologyNotPreserved}
For $t \in \R$, let $A_t$ be the two-dimensional algebra generated by an element $x$ and the unit $1$ subject to the relation $x^2 = t\cdot 1.$  Then $A_t \cong \C \oplus \C$ as an algebra when $t \neq 0$, and $A_0$ is the exterior algebra on a one dimensional vector space.  Consequently,
\[ HP_0(A_t) \cong \begin{cases} \C \oplus \C, &t \neq 0\\ \C, &t = 0. \end{cases} \]  A similar result holds for periodic cyclic cohomology $HP^0(A_t)$.
\end{example}

From the point of view of differential equations, one issue is that the periodic cyclic complex is never a Banach space.  Even in the case where $A$ is a Banach algebra, e.g. finite dimensional, the chain groups $C_n(A)$ are also Banach spaces, but the periodic cyclic complex
\[ C_{\per}(A) = \prod_{n=0}^\infty C_n(A) \] is a Fr\'{e}chet space, as it is a countable product of Banach spaces.  The operator $\nabla_{GM}$ contains the degree $-2$ term $\iota_E: C_n(A) \to C_{n-2}(A)$.  Thus unless $E = 0$, one cannot reduce the problem to the individual Banach space factors, as the differential equations are hopelessly coupled together.

One instance in which $\nabla_{GM}$ is clearly integrable is when the deformation $\{A_t\}_{t \in J}$ is trivial.  Using Proposition \ref{Proposition-IntegrableTensorAndDual} and Proposition \ref{Proposition-TrivialDeformationOfAlgebras}, we obtain the following.

\begin{proposition}
If $A_J$ has an integrable connection $\nabla$ that is a derivation, then $\nabla_{GM} = L_\nabla$ is integrable on $C_{\per}(A_J)$, and
$P^{\nabla_{GM}}_{s,t}: C_{\per}(A_s) \to C_{\per}(A_t)$ is the map of complexes induced by the algebra isomorphism $P^\nabla_{s,t}: A_s \to A_t$.
\end{proposition}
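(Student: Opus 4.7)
\emph{Proof plan.} The plan is to recognize $\nabla_{GM}$ as a tensor product of copies of $\nabla$ and then invoke Proposition~\ref{Proposition-IntegrableTensorAndDual} factorwise. First, since $\nabla$ is a derivation of $A_J$, one has $E = \delta\nabla = 0$, so $\nabla_{GM} = L_\nabla - I_E = L_\nabla$ by the very definition of the Gauss--Manin connection (as also recorded in Corollary~\ref{Corollary-NablaGMOnTrivialDeformation}). Moreover, Proposition~\ref{Proposition-TrivialDeformationOfAlgebras} shows that each parallel translation map $P^\nabla_{s,t}\colon A_s \to A_t$ is a continuous isomorphism of locally convex algebras, so it induces a chain isomorphism $(P^\nabla_{s,t})_\ast\colon C_{\per}(A_s) \to C_{\per}(A_t)$ sending $(a_0,\dots,a_n)$ to $(P^\nabla_{s,t}a_0,\dots,P^\nabla_{s,t}a_n)$. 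This is the candidate for $P^{\nabla_{GM}}_{s,t}$.

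Next I would verify integrability of $L_\nabla$ degree by degree. Extend $\nabla$ to the unitization $(A_J)_+ = A_J \oplus C^\infty(J)$ by the rule $\nabla(a,f) = (\nabla a, f')$; this is the direct sum of the integrable connection $\nabla$ on $A_J$ with the trivial connection $\tfrac{d}{dt}$ on $C^\infty(J)$, hence integrable, and its parallel translation fixes $e$ while restricting to $P^\nabla_{s,t}$ on $A_J$. By repeated application of part~(\ref{Part-TensorConnection}) of Proposition~\ref{Proposition-IntegrableTensorAndDual}, the tensor product connection on
\[ C_n(A_J) \;=\; (A_J)_+ \potimes_{C^\infty(J)} A_J^{\potimes_{C^\infty(J)} n} \]
is integrable, with parallel translation equal to the $(n+1)$-fold tensor product of the parallel translations on the factors. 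A direct comparison with the formula $L_\nabla(a_0,\dots,a_n) = \sum_i(a_0,\dots,\nabla a_i,\dots,a_n)$ shows that $L_\nabla$ on $C_n(A_J)$ is precisely this tensor product connection, so its parallel translation coincides with $(P^\nabla_{s,t})_\ast$ on each $C_n(A_J)$.

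Finally I would pass to the product. Since $C_{\per}(A_J) = \prod_n C_n(A_J)$ with the product topology, and $L_\nabla$ respects this direct-product decomposition, the parallel sections for $L_\nabla$ on $C_{\per}(A_J)$ are precisely tuples of parallel sections on each $C_n(A_J)$; existence and uniqueness follow componentwise. To apply Theorem~\ref{Theorem-IntegrableConnections}, note that a map into a product $\prod_n X_n$ lands in $C^\infty(J\times J,\prod_n X_n)$ continuously iff each component does, because the functor $C^\infty(J\times J,-)$ commutes with direct products and the product topology is determined by its projections. So integrability passes from each $C_n(A_J)$ to the product $C_{\per}(A_J)$, and the parallel translation operator is the product of the degreewise operators, i.e.\ the chain isomorphism $(P^\nabla_{s,t})_\ast$ induced by $P^\nabla_{s,t}\colon A_s \to A_t$.

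The only point that requires any care is the last one --- confirming that integrability of the factors yields integrability of the countable product --- but this reduces immediately to the elementary fact that $C^\infty(J\times J,-)$ commutes with $\prod_n$, so there is no real analytic obstacle here, in contrast to the perturbative situation (e.g.\ Proposition~\ref{Proposition-NilpotentPerturbation}) where nontrivial series must be controlled.
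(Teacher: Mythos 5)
Your proof is correct and takes essentially the same approach as the paper, which simply cites Propositions~\ref{Proposition-IntegrableTensorAndDual} and \ref{Proposition-TrivialDeformationOfAlgebras} and leaves the remaining details (extension of $\nabla$ to the unitization, identification of $L_\nabla$ with the tensor-product connection on each $C_n(A_J)$, passage to the countable product using the fact that $\potimes$ commutes with direct products) to the reader. Your write-up makes those steps explicit, including the final product-topology point, and no gaps remain.
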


While this is not surprising, it is interesting to note is that if we consider another connection $\nabla'$ on $A_J$, the corresponding Gauss-Manin connection $\nabla_{GM}'$ on $C_{\per}(A_J)$ need not be integrable, and in general seems unlikely to be so.  However the induced connection $\left(\nabla_{GM}'\right)_*$ on $HP_\bullet(A_J)$ is necessarily integrable by the uniqueness of the Gauss-Manin connection up to chain homotopy.

As proving integrability of $\nabla_{GM}$ at the level of the complex $C_{\per}(A_J)$ is both too difficult and, in some cases, too strong of a result, our general approach will be to find a different complex that computes $HP_\bullet(A_J)$ equipped with a compatible connection.

\section{A rigidity theorem for periodic cyclic cohomology}

In this section, we give our main theorem for rigidity of periodic cyclic cohomology of certain Banach algebras.  We first review the necessary concepts from homological algebra.

\subsection{Homological bidimension}

Let $A$ be a (possibly nonunital) Banach algebra, and let $A^e = A_+ \potimes A_+^{\op}$ be its topological enveloping algebra.  The algebra $A^e$ is designed so that there is a one-to-one correspondence between locally convex $A$-bimodules and locally convex unital left $A^e$-modules.  Here, we shall only discuss modules whose underlying space is a Banach space.  The continuous Hochschild cohomology of $A$ with coefficients in a Banach $A$-bimodule $M$ is defined as
\[ H^\bullet(A,M) := \Ext_{A^e}^\bullet(A_+, M). \]  See \cite{MR1093462} for a discussion of derived functors in the context of locally convex algebras and modules.  When $A$ is unital, we do not have to be careful with unitizations, as
\[ H^\bullet(A,M) = \Ext_{A \potimes A^{\op}}^\bullet(A, M). \]  The Hochschild cohomology $H^\bullet(A,A)$ coincides with our previous notation, and $HH^\bullet(A) = H^\bullet(A, A^*)$.  The bimodule structure on $A^*$ comes from a general construction: given any $A$-bimodule $M$, the topological dual $M^* = \Hom(M, \C)$ is an $A$-bimodule via
\[ (a \cdot \phi \cdot b)(m) = \phi(bma), \qquad \forall \phi \in M^*. \]  By considering the topological bar resolution $B_\bullet(A)$, which is a projective resolution of $A_+$ by $A^e$-modules, we obtain the \emph{standard complex}
\[ C^n(A, M) = \Hom_{A^e}(B_n(A), M) \cong \Hom(A^{\potimes n}, M), \] with differential
\begin{align*}
(\delta D)(a_1, \ldots , a_{n+1}) &= a_1 D(a_2, \ldots , a_{n+1}) + (-1)^{n+1}D(a_1, \ldots , a_n)a_{n+1}\\
& \qquad + \sum_{j=1}^n (-1)^j D(a_1, \ldots , a_ja_{j+1}, \ldots , a_{n+1}),
\end{align*} whose cohomology is $H^\bullet(A,M)$, see \cite[Section III.4.2]{MR1093462}.

The \emph{homological bidimension} of a Banach algebra $A$ is
\[ \db A = \inf \{n ~|~ H^{n+1}(A,M) = 0 \text{ for all Banach }A\text{-bimodules }M\} \] and the \emph{weak homological bidimension} of $A$ is
\[ \dbw A = \inf \{n ~|~ H^{n+1}(A,M^*) = 0 \text{ for all Banach }A\text{-bimodules }M\}. \]  Clearly, $\dbw A \leq \db A$.  It is a fact that if $H^{n+1}(A,M) = 0$ (resp. $H^{n+1}(A,M^*) = 0$) for all $M$, then $H^{m}(A,M) = 0$ (resp. $H^m(A,M^*) = 0$) for all $M$ and all $m \geq n+1$ \cite[Theorem III.5.4]{MR1093462} (resp. \cite{MR1437458}).  A Banach algebra $A$ is called \emph{amenable} if $\dbw A = 0$.  As an example, Johnson proved that the convolution algebra $L^1(G)$ of a locally compact group with respect to Haar measure is amenable if and only if the group $G$ is amenable \cite[Theorem 2.5]{MR0374934}.  A Banach algebra $A$ for which $\dbw A = n$ is also called $(n+1)$-amenable.

As in \cite{MR823176}, we shall consider the universal differential graded algebra $(\Omega^\bullet A, d)$ associated to $A$.  However, we shall use the topological version, constructed using completed projective tensor products.  Explicitly, $\Omega^0 A \cong A = C_0(A)$ and
\[ \Omega^n A \cong A_+ \potimes A^{\potimes n} = C_n(A), \] under the identification
\[ a_0da_1 da_2 \ldots da_n \longleftrightarrow (a_0, a_1, a_2, \ldots , a_n). \]  Then $\Omega^n A$ is a Banach $A$-bimodule with the left action
\[ a\cdot(a_0da_1 \ldots da_n) = (aa_0)da_1\ldots da_n.\]  The right action is determined by the relation
\[ (da_1)a_2 = d(a_1a_2) - a_1(da_2). \]  The map
\[ d^{\otimes n}: A^{\potimes n} \to \Omega^n A, \qquad (a_1, \ldots , a_n) \mapsto da_1\cdot \ldots \cdot da_n \] is a Hochschild $n$-cocycle in the standard complex with coefficients in the bimodule $\Omega^nA$.  In fact, $d^{\otimes n}$ is the universal Hochschild $n$-cocycle in the sense that any Hochschild cocycle $D: A^{\potimes n} \to M$ into a Banach $A$-bimodule factors through a unique $A$-bimodule map $F: \Omega^nA \to M$, determined by
\[ F(da_1 \ldots da_n) = D(a_1, \ldots , a_n),\] as in \cite{MR1303030}.  Thus the cohomology class $[D] \in H^n(A,M)$ is the image of $[d^{\otimes n}]$ under the map \[ H^n(A, \Omega^nA) \to H^n(A,M) \] induced by $F$.  It follows that
\[ \db A \leq n \qquad \text{if and only if} \qquad H^{n+1}(A, \Omega^{n+1}A) = 0. \]

Let's now consider cocycles with values in dual Banach modules.  Compose the universal $n$-cocycle $d^{\otimes n}$ with the canonical embedding into the double dual to obtain an $n$-cocycle
\[ d^{\otimes n}: A^{\potimes n} \to (\Omega^nA)^{**}. \]
Given any standard $n$-cocycle $D: A^{\potimes n} \to M^*$, consider the bimodule map
\[ F: \Omega^n A \to M^* \]
induced by the universal property of $\Omega^nA$.  Define a bimodule map
\[ G: M \to (\Omega^nA)^*, \qquad G(m)(\omega) = F(\omega)(m). \]  Then the dual map
\[ G^*: (\Omega^nA)^{**} \to M^* \] is an $A$-bimodule map that satisfies $G^* \circ d^{\otimes n} = D$.
It follows that
\[ \dbw A \leq n \qquad \text{if and only if} \qquad H^{n+1}(A, (\Omega^{n+1}A)^{**}) = 0. \]

Now suppose $\{A_t\}_{t \in J}$ is a smooth deformation of Banach algebras and let $A_J$ be its algebra of sections.  One can form the space of abstract $n$-forms $\Omega^n A_J$ over the ground ring $C^\infty(J)$ by taking the projective tensor products over $C^\infty(J)$.  Notice that the spaces $\{\Omega^nA_t\}_{t \in J}$ are all canonically isomorphic as Banach spaces, and $\Omega^nA_J$ is isomorphic, as a $C^\infty(J)$-module, to the space of smooth functions from $J$ into the underlying Banach space of $\Omega^nA_t$.  There is a universal $C^\infty(J)$-linear cocycle
\[ d^{\otimes n}: A_J^{\potimes_{C^\infty(J)} n} \to \Omega^nA_J, \qquad d^{\otimes n}(a_1, \ldots , a_n) = da_1\ldots da_n. \]  By Propositions \ref{Proposition-TensorProductOfFreeModules} and \ref{Proposition-BanachSpaceSmoothPathsOfHoms}, the complex 
\[ C^\bullet_{C^\infty(J)}(A_J, \Omega^nA_J) = \Hom_{C^\infty(J)}\left( A_J^{\potimes_{C^\infty(J)} \bullet}, \Omega^nA_J \right) \]
is isomorphic to the complex of sections of the deformation $\{C^\bullet(A_t, \Omega^nA_t)\}_{t \in J}$.  Moreover, evaluation at $t \in J$ induces a chain map
\[ \epsilon_t: C^\bullet_{C^\infty(J)}(A_J, \Omega^nA_J) \to C^\bullet(A_t, \Omega^nA_t) \] which maps the universal cocycle for $A_J$ to the universal cocycle for $A_t$.  Thus if $H^{n+1}_{C^\infty(J)}(A_J, \Omega^{n+1}A_J) = 0$, we have $H^{n+1}(A_t, \Omega^{n+1}A_t) = 0$ for all $t \in J$, and consequently $\db A_t \leq n$ for all $t \in J$.

We can also consider the $C^\infty(J)$-linear double dual module $(\Omega^nA_J)^{\dual\dual}$ and the cocycle
\[ d^{\otimes n}: A_J^{\otimes_{C^\infty(J)}n} \to (\Omega^nA_J)^{\dual\dual} \] obtained by composing the universal cocycle with the canonical embedding into the double dual.  Using Proposition \ref{Proposition-BanachSpaceSmoothPathsOfHoms} and Corollary \ref{Corollary-DualFreeModule}, the Hochschild complex $C^\bullet_{C^\infty(J)}(A_J, (\Omega^nA_J)^{\dual\dual})$ identifies with the complex of sections of the deformation $\{C^\bullet(A_t, (\Omega^nA_t)^{**})\}_{t \in J}$.  By considering evaluation at $t \in J$, we see that if $[d^{\otimes n}] = 0$ in $H^n_{C^\infty(J)}(A_J, (\Omega^nA_J)^{\dual\dual})$, then $[d^{\otimes n}] = 0$ in $H^n(A_t, (\Omega^nA_t)^**)$ for all $t \in J$.  We have proved the following proposition.

\begin{proposition}
Let $\{A_t\}_{t \in J}$ be a deformation of Banach algebras.
\begin{enumerate}
\item If $H^{n+1}_{C^\infty(J)}(A_J, \Omega^nA_J) = 0$, then $\db A_t \leq n$ for all $t \in J$.
\item If $H^{n+1}_{C^\infty(J)}(A_J, (\Omega^nA_J)^{\dual\dual}) = 0$, then $\dbw A_t \leq n$ for all $t \in J$.
\end{enumerate}
\end{proposition}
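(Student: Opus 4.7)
The plan is to promote the universal cocycle characterization of (weak) bidimension to the $C^\infty(J)$-linear setting, and then fiber down via evaluation. The starting point is the fact, recorded just above the proposition, that $\db A \leq n$ is equivalent to the vanishing of $H^{n+1}(A,\Omega^{n+1}A)$, and $\dbw A \leq n$ is equivalent to the vanishing of $H^{n+1}(A,(\Omega^{n+1}A)^{**})$, both because the universal cocycle $d^{\otimes(n+1)}\colon A^{\potimes(n+1)} \to \Omega^{n+1}A$ represents every cohomology class into an arbitrary (dual) bimodule through the universal property of $\Omega^{n+1}A$.

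First, I would set up the $C^\infty(J)$-linear universal cocycle $d^{\otimes(n+1)}\colon A_J^{\potimes_{C^\infty(J)}(n+1)} \to \Omega^{n+1}A_J$, observing that $\Omega^{n+1}A_J$ (constructed using $\potimes_{C^\infty(J)}$) is simply the free $C^\infty(J)$-module $C^\infty(J,\Omega^{n+1}A_{t_0})$ on the common Banach space of forms (Proposition~\ref{Proposition-TensorProductOfFreeModules} assembled inductively). Next, using Proposition~\ref{Proposition-BanachSpaceSmoothPathsOfHoms} together with Proposition~\ref{Proposition-TensorProductOfFreeModules}, I would identify the $C^\infty(J)$-linear Hochschild complex $C^\bullet_{C^\infty(J)}(A_J,\Omega^{n+1}A_J)$ with the complex of smooth sections of the deformation $\{C^\bullet(A_t,\Omega^{n+1}A_t)\}_{t \in J}$. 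Evaluation at $t \in J$ then provides a chain map $\epsilon_t$ between these Hochschild complexes which, by construction, sends the $C^\infty(J)$-linear universal cocycle to the fiberwise universal cocycle for $A_t$.

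With these ingredients in place, the argument for (1) is immediate: if $H^{n+1}_{C^\infty(J)}(A_J,\Omega^{n+1}A_J)=0$, then $d^{\otimes(n+1)}$ is a $C^\infty(J)$-linear coboundary, and applying $\epsilon_t$ shows that the universal cocycle for $A_t$ is a coboundary for every $t$, so $H^{n+1}(A_t,\Omega^{n+1}A_t)=0$ and hence $\db A_t \leq n$. For (2), the same scheme works with $\Omega^{n+1}A_J$ replaced by its $C^\infty(J)$-linear double dual; the needed identification of $(\Omega^{n+1}A_J)^{\dual\dual}$ with the module of smooth sections of $\{(\Omega^{n+1}A_t)^{**}\}$ uses Corollary~\ref{Corollary-DualFreeModule} twice (applicable because each $\Omega^{n+1}A_t$, and hence its dual, is a Banach space).

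The main point to be careful about is the second step, namely the identification of the $C^\infty(J)$-linear cochain modules with section modules of the fiberwise Hochschild complexes: Proposition~\ref{Proposition-BanachSpaceSmoothPathsOfHoms} is exactly what makes continuous $C^\infty(J)$-multilinear maps on the free modules correspond to smooth families of continuous multilinear maps on the fibers, and Proposition~\ref{Proposition-TensorProductOfFreeModules} ensures that iterated $\potimes_{C^\infty(J)}$ of $A_J$ remains free. Once that bookkeeping is done, the conclusion is a pure pullback-of-coboundary argument under $\epsilon_t$; no deformation-theoretic machinery (connections, Gauss--Manin, etc.) is required for this statement.
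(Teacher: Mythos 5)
Your proposal is correct and follows essentially the same route the paper takes: the paragraphs immediately preceding this proposition in the paper carry out precisely the argument you give — identify $\Omega^{n+1}A_J$ as the free $C^\infty(J)$-module on the common Banach space of forms, identify $C^\bullet_{C^\infty(J)}(A_J,\Omega^{n+1}A_J)$ (via Propositions \ref{Proposition-TensorProductOfFreeModules} and \ref{Proposition-BanachSpaceSmoothPathsOfHoms}) with the complex of sections of $\{C^\bullet(A_t,\Omega^{n+1}A_t)\}_{t\in J}$, observe that $\epsilon_t$ carries the $C^\infty(J)$-linear universal cocycle to the fiberwise one, and conclude by the universal-cocycle characterization of $\db$ and $\dbw$; the dual statement uses Corollary \ref{Corollary-DualFreeModule} exactly as you describe. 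One small note: the proposition as printed has $\Omega^n A_J$, which appears to be a typo for $\Omega^{n+1}A_J$ (this is what the universal-cocycle criterion for $\db A\leq n$ requires and what the paper's own preceding paragraph uses); you have silently, and correctly, read it that way.
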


\begin{lemma} \label{Lemma-FiniteHochschildDimensionContraction}
Let $A$ be a Banach algebra and $M$ be a Banach $A$-bimodule.
\begin{enumerate}
\item If $\db A \leq n$, then the standard complex $C^\bullet(A,M)$ has a contracting homotopy in degree $n+1$
\[ \xymatrix{
C^n(A, M) \ar@<.5ex>[r]^-{\delta} & C^{n+1}(A, M) \ar@<.5ex>[l]^-h \ar@<.5ex>[r]^-{\delta} & C^{n+2}(A, M), \ar@<.5ex>[l]^-h
} \qquad \delta h + h \delta = 1.\]
\item If $\dbw A \leq n$, then the standard complex $C^\bullet(A,M^*)$ has a contracting homotopy in degree $n+1$
\[ \xymatrix{
C^n(A, M^*) \ar@<.5ex>[r]^-{\delta} & C^{n+1}(A, M^*) \ar@<.5ex>[l]^-h \ar@<.5ex>[r]^-{\delta} & C^{n+2}(A, M^*), \ar@<.5ex>[l]^-h
} \qquad \delta h + h \delta = 1.\]
\end{enumerate}
\end{lemma}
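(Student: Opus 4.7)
The plan is to exploit the universal property that every Hochschild cocycle $D \in Z^k(A, N)$ factors uniquely as $D = T_D \circ d^{\otimes k}$ for a continuous $A$-bimodule map $T_D : \Omega^k A \to N$, together with the characterizations
\[
\db A \leq n \iff H^{n+1}(A, \Omega^{n+1}A) = 0, \qquad \dbw A \leq n \iff H^{n+1}(A, (\Omega^{n+1}A)^{**}) = 0,
\]
established just above the lemma. Since vanishing of $H^{n+1}(A, -)$ on all Banach bimodules (resp.\ all dual Banach bimodules) forces $H^k(A, -)$ to vanish on the same class for every $k \geq n+1$ by \cite[Theorem III.5.4]{MR1093462} (resp.\ \cite{MR1437458}), the universal cocycle $d^{\otimes k}$ in part (1), and the composition $\iota \circ d^{\otimes k}$ with the canonical bimodule embedding $\iota : \Omega^k A \hookrightarrow (\Omega^k A)^{**}$ in part (2), is a Hochschild coboundary for every $k \geq n+1$.

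First I would fabricate, for each such $k$, a continuous linear right inverse $s^{(k)}$ to $\delta$ defined on cocycles. For part (1), pick a primitive $u^{(k)} \in C^{k-1}(A, \Omega^k A)$ with $\delta u^{(k)} = d^{\otimes k}$ and set
\[ s^{(k)}(D) := T_D \circ u^{(k)}, \qquad D \in Z^k(A, M). \]
Naturality of $\delta$ under bimodule maps in the coefficient module gives $\delta s^{(k)}(D) = T_D \circ d^{\otimes k} = D$, and $s^{(k)}$ is continuous because the bijection $D \mapsto T_D$ is already topological, as one sees from the explicit formula $T_D(a_0 \, da_1 \cdots da_k) = a_0 \cdot D(a_1, \ldots, a_k)$. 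For part (2) the same scheme runs after one extra dualisation: pick $v^{(k)} \in C^{k-1}(A, (\Omega^k A)^{**})$ with $\delta v^{(k)} = \iota \circ d^{\otimes k}$; given $D \in Z^k(A, M^*)$, form the bimodule adjoint $S_D : M \to (\Omega^k A)^*$ of $T_D$ by $S_D(m)(\omega) := T_D(\omega)(m)$, so that $S_D^* : (\Omega^k A)^{**} \to M^*$ satisfies $S_D^* \circ \iota = T_D$, and then set $s^{(k)}(D) := S_D^* \circ v^{(k)}$; the same naturality argument yields $\delta s^{(k)}(D) = D$.

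With these one-step retractions in hand for $k = n+1, n+2, n+3$, I would define the contracting homotopy by the degree-shifting formula
\[ h(D) := s^{(k)}\!\bigl(D - s^{(k+1)}(\delta D)\bigr), \qquad D \in C^k(A, M) \text{ or } C^k(A, M^*), \quad k = n+1, n+2. \]
The argument of the outer $s^{(k)}$ is automatically a $k$-cocycle since $\delta s^{(k+1)}(\delta D) = \delta D$, so the formula is well defined and continuous linear in $D$. A one-line computation then gives, for $D \in C^{n+1}$,
\[ \delta h(D) + h(\delta D) = \bigl(D - s^{(n+2)}(\delta D)\bigr) + s^{(n+2)}(\delta D) = D, \]
which is precisely the identity $\delta h + h\delta = 1$ required by the lemma.

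The step I expect to absorb the most care is part (2), where I need the adjoint chain $T_D \mapsto S_D \mapsto S_D^*$ to produce an $A$-bimodule map $S_D^*$ depending continuously linearly on $D$ and genuinely satisfying $S_D^* \circ \iota = T_D$ as bimodule maps (not merely as linear maps). Both assertions are routine unwindings of the bimodule actions together with the standard identification $\Hom(X, M^*) \cong (X \potimes M)^*$, but they are the one place where the argument requires more than formal manipulation; everywhere else the construction is essentially dictated by the universal property and the cited propagation of cohomology vanishing to higher degrees.
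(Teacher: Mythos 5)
Your proof is correct, but it takes a genuinely different route from the paper. The paper proceeds entirely through abstract homological algebra: for part (1) it observes that $\db A \leq n$ gives a projective resolution of $A_+$ of length $n$, invokes the Comparison theorem to conclude the bar resolution $B_\bullet(A)$ is chain-homotopy equivalent to it, and applies $\Hom_{A^e}(\cdot, M)$; for part (2) it takes a flat resolution of $A_+$ of length $n$, dualises to get an injective resolution of $A^*$, uses the Comparison theorem for injective resolutions to contract $B_\bullet(A)^*$ above degree $n$, and transports across $\Hom_{A^e}(M, B_\bullet(A)^*) \cong C^\bullet(A, M^*)$. You instead build $h$ by hand from primitives $u^{(k)}$ (resp.\ $v^{(k)}$) of the universal cocycles $d^{\otimes k}$ (resp.\ $\iota\circ d^{\otimes k}$) and the factorisation $D = T_D\circ d^{\otimes k}$, exploiting the equivalences $\db A \leq n \iff H^{n+1}(A,\Omega^{n+1}A)=0$ and $\dbw A \leq n \iff H^{n+1}(A,(\Omega^{n+1}A)^{**})=0$ developed just before the lemma. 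The algebraic trick $h(D) := s^{(k)}(D - s^{(k+1)}(\delta D))$ with $s^{(k)}(D)=T_D\circ u^{(k)}$ (resp.\ $S_D^*\circ v^{(k)}$) is clean, and all the continuity and bimodule-compatibility checks you flag as needing care are exactly the ones the paper verifies in the paragraph preceding the lemma (there the adjoint chain $T_D\mapsto S_D\mapsto S_D^*$ and the identity $S_D^*\circ\iota = T_D$ are worked out). What your approach buys is an explicit formula for the contraction, directly expressed in terms of the universal $\Omega^\bullet A$ machinery the section has already set up; what the paper's approach buys is avoiding any appeal to the propagation of vanishing to higher degrees, since the Comparison theorem produces the contraction in all degrees above $n$ simultaneously. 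Both are valid, and your argument correctly uses results established before the lemma, so there is no circularity.
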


\begin{proof}
If $\db A \leq n$, then $A_+$ has a projective resolution of length $n$ \cite[Theorem III.5.4]{MR1093462}.  By the ``Comparison theorem" \cite[Theorem III.2.3]{MR1093462}, a projective resolution is unique up to chain homotopy equivalence in the category of complexes of Banach $A$-bimodules.  By applying the functor $\Hom_{A^e}(\cdot, M)$, it follows that the standard complex $C^\bullet(A,M)$ has the required homotopy.

If $\dbw A \leq n$, then $A_+$ has a flat resolution of length $n$ \cite[Theorem 1]{MR1437458}, that is, a resolution by Banach $A$-bimodules which are flat as left $A^e$-modules.  Since the dual of a flat module is injective \cite[Theorem VII.1.14]{MR1093462}, it follows that $A^*$ has an injective resolution of length $n$.  Using the Comparison theorem for injective resolutions, the dual $B_\bullet(A)^*$ of the bar resolution has a contracting homotopy in degree $n+1$ consisting of $A$-bimodule maps.  After applying the functor $\Hom_{A^e}(M, \cdot)$, we see the complex $\Hom_{A^e}(M, B_\bullet(A)^*)$ has a contracting homotopy in degree $n+1$.  However there is a natural isomorphism of complexes
\[ \Hom_{A^e}(M, B_\bullet(A)^*) \cong \Hom_{A^e}(B_\bullet(A), M^*) = C^\bullet(A, M^*),\] which gives the result, see \cite[Proposition III.4.13]{MR1093462}.
\end{proof}

\begin{corollary} \label{Corollary-UpperSemicontinuity}
Let $\{A_t\}_{t \in J}$ be a smooth deformation of Banach algebras.  Then the functions
\[ t \mapsto \db A_t \qquad \text{and} \qquad t \mapsto \dbw A_t \] are upper semi-continuous.
\end{corollary}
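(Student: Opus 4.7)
The plan is to combine Lemma~\ref{Lemma-FiniteHochschildDimensionContraction} (finite bidimension implies a continuous linear contracting homotopy of the standard complex) with Lemma~\ref{Lemma-DeformationOfContractibleHasZeroCohomology} (a deformation of a contractible complex of Banach spaces has vanishing cohomology on a neighborhood), applied to the Hochschild complexes with coefficients in the universal bimodule of $(n+1)$-forms, and then to invoke the criterion from the preceding proposition to conclude.

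Fix $t_0 \in J$ and set $n = \db A_{t_0}$, which we may assume is finite (otherwise there is nothing to prove near $t_0$). By Lemma~\ref{Lemma-FiniteHochschildDimensionContraction}(1) applied with $M = \Omega^{n+1}A_{t_0}$, the standard complex $C^\bullet(A_{t_0}, \Omega^{n+1}A_{t_0})$ admits a continuous linear contracting homotopy in degree $n+1$. The family $\{C^\bullet(A_t, \Omega^{n+1}A_t)\}_{t \in J}$ is a smooth deformation of cochain complexes of Banach spaces: each cochain group is a fixed Banach space (only the product and induced bimodule actions vary with $t$), and the differentials depend smoothly on $t$ through the deformed multiplication $m_t$. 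Its complex of sections is identified with $C^\bullet_{C^\infty(J)}(A_J, \Omega^{n+1}A_J)$, as noted in the discussion preceding the statement. Lemma~\ref{Lemma-DeformationOfContractibleHasZeroCohomology}, applied at the basepoint $t_0$ in place of $0$ (the proof is indifferent to the choice), then produces a subinterval $J' \ni t_0$ on which $H^{n+1}_{C^\infty(J')}(A_{J'}, \Omega^{n+1}A_{J'}) = 0$, and the preceding proposition yields $\db A_t \leq n$ for all $t \in J'$. This is the required upper semi-continuity.

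The argument for $\dbw$ is formally identical: with $n = \dbw A_{t_0}$, Lemma~\ref{Lemma-FiniteHochschildDimensionContraction}(2) applied with $M = (\Omega^{n+1}A_{t_0})^*$ yields a continuous linear contracting homotopy in degree $n+1$ for $C^\bullet(A_{t_0}, (\Omega^{n+1}A_{t_0})^{**})$; the family $\{C^\bullet(A_t, (\Omega^{n+1}A_t)^{**})\}_{t \in J}$ has complex of sections $C^\bullet_{C^\infty(J)}(A_J, (\Omega^{n+1}A_J)^{\dual\dual})$ via Corollary~\ref{Corollary-DualFreeModule} and Proposition~\ref{Proposition-BanachSpaceSmoothPathsOfHoms}, so Lemma~\ref{Lemma-DeformationOfContractibleHasZeroCohomology} and the preceding proposition again conspire to furnish a subinterval around $t_0$ on which $\dbw A_t \leq n$. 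The only real check is that these families of Hochschild complexes are genuine smooth deformations of Banach-space cochain complexes in the precise sense required by Lemma~\ref{Lemma-DeformationOfContractibleHasZeroCohomology}, but this has already been arranged by the identifications of sections with the $C^\infty(J)$-linear Hochschild complexes, so no serious obstacle arises.
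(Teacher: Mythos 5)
Your proof is correct and follows exactly the route indicated in the paper, whose own proof is a single sentence referring to Lemma~\ref{Lemma-FiniteHochschildDimensionContraction}, Lemma~\ref{Lemma-DeformationOfContractibleHasZeroCohomology}, and the preceding proposition; you have simply supplied the details that sentence leaves implicit, including the correct choice of coefficient bimodule and the observation that the basepoint in Lemma~\ref{Lemma-DeformationOfContractibleHasZeroCohomology} may be taken to be $t_0$ rather than $0$.
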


\begin{proof}
%
Simply combine the previous two results with Lemma \ref{Lemma-DeformationOfContractibleHasZeroCohomology}.
\end{proof}

\begin{example}
Let $G$ be a connected semisimple Lie group with maximal compact subgroup $K$.  Let $\mathfrak{g}$ and $\mathfrak{k}$ denote their respective Lie algebras.  In \cite{MR2391803}, a smooth deformation $\{G_t\}$ of Lie groups is constructed in such a way that $G_0 = \mathfrak{g}/\mathfrak{k} \rtimes K$ and $G_t \cong G$ for all $t \neq 0$.  The group $G_0$ is amenable, but $G$ may not be, e.g. $G = SL(2,\R)$.  So Corollary \ref{Corollary-UpperSemicontinuity} and Johnson's theorem imply that there is no corresponding smooth deformation $\{L^1(G_t)\}$ of Banach algebras.
\end{example}


\subsection{Contractions and retractions}
When the universal $(n+1)$-cocycle is a coboundary, one can construct a contracting homotopy in the Hochschild complex in a uniform way.  As described in \cite{MR1269386}, if $\phi: A^{\potimes n} \to \Omega^{n+1}A$ satisfies $\delta \phi = d^{\otimes (n+1)}$, then
\[ \alpha: \Omega^k A \to \Omega^{k+1}A, \qquad \alpha(a_0 da_1 \ldots da_k) = a_0 \phi(a_1, \ldots , a_n) da_{n+1} \ldots da_k \]
defines a contracting homotopy of the Hochschild chain complex $(C_\bullet(A), b)$ in degrees $k \geq n+1$.  The transpose of $\alpha$ gives a contracting homotopy in degree $k \geq n+1$ for the Hochschild cochain complex $(C^\bullet(A), b)$.

Khalkhali showed in \cite{MR1269386} that if the cocycle $d^{\otimes (n+1)}: A^{\potimes (n+1)} \to (\Omega^{n+1}A)^{**}$ is a coboundary, then one can construct a contracting homotopy of $(C^\bullet(A), b)$ in degrees $k \geq n+1$ in a similar way.  Given a cochain $\phi: A^{\potimes n} \to (\Omega^{n+1}A)^{**}$ such that $\delta \phi = d^{\otimes (n+1)}$, define
\[ \alpha: (\Omega^{k+1}A)^* \to (\Omega^kA)^*, \qquad k \geq n \] by
\[ (\alpha f)(a_0 da_1 \ldots da_k) = \left[ a_0 \cdot \phi(a_1, \ldots , a_n) \right](f_{da_{n+1} \ldots da_k}), \]
where $f \in (\Omega^{k+1}A)^*$ and $f_{da_{n+1} \ldots da_k} \in (\Omega^{n+1}A)^*$ is given by
\[ f_{da_{n+1} \ldots da_k}(\omega) = f(\omega da_{n+1} \ldots da_k). \]
Then $b\alpha + \alpha b = 1$ in $C^k(A)$ when $k \geq n+1$.

Given a contracting homotopy $\alpha: C^{k+1}(A) \to C^k(A)$, Khalkhali constructed a retract of the periodic cyclic cochain complex with only finitely many degrees \cite{MR1269386}, which we now describe.  Let $N$ be such that $\alpha$ is a contracting homotopy in all degrees above $2N$.  Let
\[ C_0^{\even}(A) = \left(\bigoplus_{k=0}^{N-1} C^{2k}(A)\right) \bigoplus \ker \left\{ b: C^{2N}(A) \to C^{2N+1}(A) \right\} \] and
\[ C_0^{\odd}(A) = \bigoplus_{k=0}^{N-1} C^{2k+1}(A). \]
The $\Z/2$-graded complex $C_0^{\per}(A) = C_0^{\even}(A) \oplus C_0^{\odd}(A)$ has differential $b+B$.  Then $C_0^{\per}(A)$ is a subcomplex of $C^{\per}(A)$, and in fact is a deformation retract.  That is, there is a chain map $R: C^{\per}(A) \to C_0^{\per}(A)$ such that $RI = \id$ and $IR$ is chain homotopic to $\id$, where $I: C_0^{\per}(A) \to C^{\per}(A)$ is the inclusion.  Thus, the cohomology of $C_0^{\per}(A)$ is $HP^\bullet(A)$.  The key feature is that $C_0^{\per}(A)$ is a complex of Banach spaces.  We won't need the explicit form of the retraction $R$, but we remark that it depends heavily on the contracting homotopy $\alpha$.

All of the above can be carried out for the algebra of sections $A_J$ of a smooth deformation $\{A_t\}_{t \in J}$ of Banach algebras, where everything is considered over the ground ring $C^\infty(J)$.  If $\phi: A_J^{\potimes n} \to \Omega^{n+1}A_J$ satisfies $\delta \phi = d^{\otimes (n+1)}$, then
\[ \alpha: \Omega^k A_J \to \Omega^{k+1}A_J, \qquad \alpha(a_0 da_1 \ldots da_k) = a_0 \phi(a_1, \ldots , a_n) da_{n+1} \ldots da_k \]
defines a contracting homotopy of the Hochschild chain complex $(C_\bullet(A_J), b)$ in degrees $k \geq n+1$.  Its dual
\[ \alpha^\dual: C^{k+1}(A_J) \to C^k(A_J) \] is a contraction for the Hochschild cochain complex.

If $\phi: A_J^{\otimes n} \to (\Omega^{n+1}A_J)^{\dual \dual}$, then Khalkhali's contracting homotopy
\[ \alpha: (\Omega^{k+1}A_J)^\dual \to (\Omega^kA_J)^\dual, \qquad k \geq n\] can be defined by the same formula as above
\[ (\alpha f)(a_0 da_1 \ldots da_k) = \left[ a_0 \cdot \phi(a_1, \ldots , a_n) \right](f_{da_{n+1} \ldots da_k}), \] where $f \in (\Omega^{k+1}A_J)^\dual$ and $f_{da_{n+1} \ldots da_k} \in (\Omega^{n+1}A_J)^\dual$ is given by
\[ f_{da_{n+1} \ldots da_k}(\omega) = f(\omega da_{n+1} \ldots da_k). \]  So $b\alpha + \alpha b = 1$ in $C^k(A_J)$ for $k \geq n+1$.  Moreover, given an $\alpha$ which is a contracting homotopy in degrees above $2N$, we can define
\[ C_0^{\even}(A_J) = \left(\bigoplus_{k=0}^{N-1} C^{2k}(A_J)\right) \bigoplus \ker \left\{ b: C^{2N}(A_J) \to C^{2N+1}(A_J) \right\} \] and
\[ C_0^{\odd}(A_J) = \bigoplus_{k=0}^{N-1} C^{2k+1}(A_J). \]
As in the $\C$-linear case, there is an inclusion $I: C_0^{\per}(A_J) \to C^{\per}(A_J)$ and a retraction $R: C^{\per}(A_J) \to C_0^{\per}(A_J)$ which are $C^\infty(J)$-linear chain maps such that $RI = \id$ and $IR$ is chain homotopic to $\id$.  The retraction $R$ is built in the same way as the $\C$-linear case using the homotopy $\alpha$.

\begin{definition}
We'll say that a locally convex algebra $A$ is $HP^\bullet$-rigid if whenever $\{A_t\}_{t \in J}$ is a smooth deformation with $A_0 = A$, then there some subinterval $J' \subseteq J$ containing $0$ for which $HP^\bullet(A_t) \cong HP^\bullet(A_0)$ for all $t \in J'$.
\end{definition}

We now give our main application of the Gauss-Manin connection.

\begin{theorem} \label{Theorem-HPRigidity}
Let $A$ be a Banach algebra such that $\dbw A < \infty$.  Then $A$ is $HP^\bullet$-rigid.
\end{theorem}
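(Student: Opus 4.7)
The strategy is as follows. The dual Gauss-Manin connection $\nabla^{GM}$ lives on the periodic cyclic cochain complex $C^{\per}(A_J)$, which is merely a Fréchet $C^\infty(J)$-module with an $\iota_E$-type term of unbounded negative degree, so a direct attempt at parallel translation fails. However, the hypothesis $\dbw A < \infty$ allows passage to Khalkhali's finite-degree retract $C_0^{\per}$, which is a Banach-fiber free $C^\infty(J')$-module on a suitable subinterval $J'\ni 0$. On such a module every connection is integrable by the Banach-space theorem recalled in Section~4, and parallel translation of the pulled-back Gauss-Manin connection will then deliver the isomorphisms $HP^\bullet(A_s) \cong HP^\bullet(A_t)$.

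Set $n = \dbw A$. By Lemma~\ref{Lemma-FiniteHochschildDimensionContraction}, the standard Hochschild cochain complex $C^\bullet(A_0, (\Omega^{n+1}A_0)^{**})$ admits a continuous contracting homotopy in degree $n+1$. Using Proposition~\ref{Proposition-BanachSpaceSmoothPathsOfHoms} and Corollary~\ref{Corollary-DualFreeModule} to identify $C^\bullet_{C^\infty(J)}(A_J, (\Omega^{n+1}A_J)^{\dual\dual})$ with the complex of sections of the deformation of Banach cochain complexes $\{C^\bullet(A_t, (\Omega^{n+1}A_t)^{**})\}_{t \in J}$, Lemma~\ref{Lemma-DeformationOfContractibleHasZeroCohomology} yields an open subinterval $J' \ni 0$ on which $H^{n+1}_{C^\infty(J')}(A_{J'}, (\Omega^{n+1}A_{J'})^{\dual\dual}) = 0$. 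Since the universal cocycle $d^{\otimes(n+1)}: A_{J'}^{\potimes_{C^\infty(J')}(n+1)} \to (\Omega^{n+1}A_{J'})^{\dual\dual}$ represents a class in this vanishing group, there exists a $C^\infty(J')$-linear cochain $\phi$ with $\delta \phi = d^{\otimes(n+1)}$. Feeding $\phi$ into Khalkhali's explicit formula produces a $C^\infty(J')$-linear contracting homotopy $\alpha$ of the Hochschild cochain complex $C^\bullet(A_{J'})$ in every degree $k \geq n+1$. The Khalkhali retraction construction then proceeds verbatim over $C^\infty(J')$ to yield a $C^\infty(J')$-linear sub-chain-complex $C_0^{\per}(A_{J'}) \subseteq C^{\per}(A_{J'})$ with inclusion $I$ and retraction $R$ satisfying $RI = \id$ and $IR \simeq \id$ via a $C^\infty(J')$-linear chain homotopy. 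Because $C_0^{\per}(A_{J'})$ is a finite direct sum of the free Banach-fiber modules $C^k(A_{J'})$ with a single summand replaced by $\ker\{b:C^{2N}(A_{J'})\to C^{2N+1}(A_{J'})\}$, which is split off by the $C^\infty(J')$-linear projection $\alpha b$, it is isomorphic to $C^\infty(J', X)$ for some Banach space $X$.

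Now pull back the dual Gauss-Manin connection by setting $\nabla_0 = R \circ \nabla^{GM} \circ I$. The identity $RI = \id$ ensures that $\nabla_0$ satisfies the Leibniz rule and is therefore a connection on $C_0^{\per}(A_{J'})$, and because $R$, $I$ are chain maps while $\nabla^{GM}$ commutes with $b+B$, so does $\nabla_0$. Since $C_0^{\per}(A_{J'}) \cong C^\infty(J', X)$ with $X$ Banach, the integrability theorem for linear ODEs on Banach spaces (as recorded in Section~4) makes $\nabla_0$ integrable. Proposition~\ref{Proposition-BundleOfComplexesParallelTransport} then furnishes parallel translation isomorphisms $P^{\nabla_0}_{s,t}: C_0^{\per}(A_s) \to C_0^{\per}(A_t)$ of locally convex cochain complexes for all $s, t \in J'$, and passing to cohomology yields the desired isomorphisms $HP^\bullet(A_s) \cong HP^\bullet(A_t)$.

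The principal difficulty is the compatibility step in the middle paragraph: one must show that Khalkhali's contracting homotopy, which is constructed from a single cochain $\phi$ trivializing the universal cocycle, can be built simultaneously across the whole family in a $C^\infty(J')$-linear way. This is exactly what forces shrinkage to the subinterval $J'$ and is the place where Lemma~\ref{Lemma-DeformationOfContractibleHasZeroCohomology} does the heavy lifting, converting the pointwise existence of a contracting homotopy at $t = 0$ into a smooth-family trivialization over $J'$.
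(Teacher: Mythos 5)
Your proposal follows essentially the same route as the paper's proof: you identify $C^\bullet_{C^\infty(J)}(A_J,(\Omega^{n+1}A_J)^{\dual\dual})$ with the complex of sections of $\{C^\bullet(A_t,(\Omega^{n+1}A_t)^{**})\}_{t}$, invoke Lemmas~\ref{Lemma-FiniteHochschildDimensionContraction} and \ref{Lemma-DeformationOfContractibleHasZeroCohomology} to produce $\phi$ with $\delta\phi=d^{\otimes(n+1)}$ over a subinterval $J'$, build Khalkhali's finite retract $C_0^{\per}(A_{J'})$ with $C^\infty(J')$-linear $I,R$, transfer the Gauss-Manin connection via $R\circ\nabla^{GM}\circ I$, and conclude by Banach-space integrability and Proposition~\ref{Proposition-BundleOfComplexesParallelTransport}. (Incidentally your use of $\nabla^{GM}$ rather than $\nabla_{GM}$ on the cochain side is the more careful notation.)

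The one place where you diverge is the justification that $\ker\{b:C^{2N}(A_{J'})\to C^{2N+1}(A_{J'})\}$ is a \emph{free} $C^\infty(J')$-module: you argue only that it is split off by the $C^\infty(J')$-linear projection $b\alpha$ (complementary to $\alpha b$) inside the free module $C^{2N}(A_{J'})$. Being complemented is not, by itself, the same as being free, because the fibers $\ker b_t$ move with $t$. To close this gap one either (a) invokes the standard fact that a smooth family of projections $p_t=b_t\alpha_t$ in a Banach space is locally similar to the constant projection $p_0$ — for $t$ near $0$ the operator $U_t=p_0p_t+(1-p_0)(1-p_t)$ is invertible and intertwines $p_t$ with $p_0$, so the image is indeed isomorphic to $C^\infty(J'',\ker b_0)$ on a possibly smaller $J''$ — or (b) does what the paper does and applies Lemma~\ref{Lemma-DeformationOfContractibleHasZeroCohomology} once more, now to the Hochschild cochain complex $\{C^\bullet(A_t)\}$ in degree $2N$, whose explicit conclusion is precisely $Z^{2N}(C^\bullet(A_{J'}))\cong C^\infty(J',Z^{2N}(C^\bullet(A_0)))$. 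Either repair is short, and once it is made your argument is complete and coincides with the paper's.
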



\begin{proof}
Let $\{A_t\}_{t \in J}$ be a smooth deformation with $A_0 = A$.  Suppose $\dbw A_0 = n$.  As described above, the Hochschild complex $C^{\bullet}_{C^\infty(J)}(A_J, (\Omega^{n+1}A_J)^{\dual \dual})$ identifies with the complex of sections of the deformation $\{C^\bullet(A_t, (\Omega^{n+1}A_t)^{**})\}_{t\in J}$.  From Lemmas \ref{Lemma-FiniteHochschildDimensionContraction} and \ref{Lemma-DeformationOfContractibleHasZeroCohomology}, $H^{n+1}_{C^\infty(J')}(A_{J'}, (\Omega^{n+1}A_{J'})^{\dual \dual}) = 0$ for a subinterval $J' \subseteq J$ containing $0$.  So there is a $\phi: A_{J'}^{\potimes n} \to (\Omega^{n+1}A_{J'})^{\dual \dual}$ with $\delta \phi = d^{\otimes (n+1)}$.  As described above, we can construct from this the deformation retract $C_0^{\per}(A_{J'})$ of $C^{\per}(A_{J'})$ for a suitable $N$.  A priori, the space of cocycles $\ker \{ b: C^{2N}(A_{J'}) \to C^{2N+1}(A_{J'}) \}$ may not be a free $C^\infty(J')$-module.  However, the conclusion of Lemma \ref{Lemma-DeformationOfContractibleHasZeroCohomology} guarantees that it is, and moreover $C_0^{\per}(A_{J'})$ is the complex of sections of $\{C_0^{\per}(A_t)\}_{t \in J'}$.

We can now transfer the Gauss-Manin connection to $C_0^{\per}(A_{J'})$.  Let
\[ I: C_0^{\per}(A_{J'}) \to C^{\per}(A_{J'}), \qquad R: C^{\per}(A_{J'}) \to C_0^{\per}(A_{J'}) \] be the inclusion and retraction, which are continuous $C^\infty(J')$-linear chain maps.  Define $\widetilde{\nabla} = R \circ \nabla_{GM} \circ I$ on $C_0^{\per}(A_{J'})$.  Then $\widetilde{\nabla}$ is a chain map and it is a connection because $RI = \id$.  Since the underlying space $C_0^{\per}(A_t)$ is a Banach space, the connection $\widetilde{\nabla}$ is integrable, and the result follows from Proposition \ref{Proposition-BundleOfComplexesParallelTransport}.
\end{proof}

Let $HE^\bullet(A)$ denote the entire cyclic cohomology of $A$, see \cite{MR953915}.  As Khalkhali showed, the canonical inclusion $HP^\bullet(A) \to HE^\bullet(A)$ is an isomorphism for Banach algebras of finite weak bidimension \cite{MR1269386}.  We immediately obtain the following.

\begin{corollary}
Let $A$ be a Banach algebra such that $\dbw A < \infty$.  Then $A$ is $HE^\bullet$-rigid.
\end{corollary}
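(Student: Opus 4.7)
The plan is to deduce $HE^\bullet$-rigidity by combining the $HP^\bullet$-rigidity of Theorem~\ref{Theorem-HPRigidity} with Khalkhali's identification $HP^\bullet(A) \cong HE^\bullet(A)$ valid whenever $\dbw A < \infty$. The key point is that the latter isomorphism must be applied not just to $A_0$, but to every $A_t$ in some neighborhood of $0$, so we need to know that finiteness of $\dbw$ propagates to nearby fibers.

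First, I would fix a smooth deformation $\{A_t\}_{t \in J}$ with $A_0 = A$ and apply Theorem~\ref{Theorem-HPRigidity} to obtain a subinterval $J_1 \subseteq J$ containing $0$ on which $HP^\bullet(A_t) \cong HP^\bullet(A_0)$ for all $t \in J_1$. Second, I would invoke upper semi-continuity of $t \mapsto \dbw A_t$ (Corollary~\ref{Corollary-UpperSemicontinuity}) to find a possibly smaller subinterval $J' \subseteq J_1$ containing $0$ on which $\dbw A_t \leq \dbw A_0 < \infty$ for every $t \in J'$. In particular, each algebra in the family $\{A_t\}_{t \in J'}$ has finite weak bidimension.

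Third, I would apply Khalkhali's theorem \cite{MR1269386} fiberwise: for every $t \in J'$ the canonical inclusion gives an isomorphism $HP^\bullet(A_t) \cong HE^\bullet(A_t)$. Chaining these together produces, for every $t \in J'$, isomorphisms
\[ HE^\bullet(A_t) \cong HP^\bullet(A_t) \cong HP^\bullet(A_0) \cong HE^\bullet(A_0), \]
which is exactly the statement that $A$ is $HE^\bullet$-rigid.

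There is no real obstacle here beyond bookkeeping: the heavy lifting has been done in Theorem~\ref{Theorem-HPRigidity} and in the cited work of Khalkhali. The only subtlety worth flagging is the need for upper semi-continuity of $\dbw$ to guarantee that the $HP = HE$ identification is available on a whole neighborhood of $t = 0$, rather than just at the single point $t = 0$.
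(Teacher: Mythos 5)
The proposal is correct and takes essentially the same approach as the paper: combine Theorem~\ref{Theorem-HPRigidity} with Khalkhali's isomorphism $HP^\bullet(A) \cong HE^\bullet(A)$ for Banach algebras of finite weak bidimension. Your explicit appeal to Corollary~\ref{Corollary-UpperSemicontinuity} to ensure $\dbw A_t < \infty$ on a whole neighborhood of $0$ is a correct and worthwhile piece of bookkeeping that the paper treats as immediate (and which is in fact already implicit in the subinterval $J'$ produced inside the proof of Theorem~\ref{Theorem-HPRigidity}).
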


\begin{example}
We'll show how our theorem can be used to give a proof of an instance of a theorem of Block on the cyclic homology of filtered algebras \cite{MR934456}.  In Block's setting we have an increasing filtration of an algebra $A$,
\[ F_0 \subset F_1 \subset F_2 \subset \ldots \]
where $A = \bigcup_n F_n$ and $F_n\cdot F_m \subset F_{n+m}$.  Letting $B = \gr(A)$ be the associated graded algebra, his result is that if $HH_n(A) = 0$ for all large enough $n$, then the inclusion $F_0 \to A$ induces an isomorphism $HP_\bullet(F_0) \cong HP_\bullet(A)$.
Let's consider the situation of a finite filtration of a Banach algebra
\[ F_0 \subset F_1 \subset \ldots \subset F_N = A, \] and suppose there exist closed subspaces $B_k \subset A$ for which each $F_n \cong \bigoplus_{k=0}^n B_k$ as Banach spaces.  Then we can identify the associated graded algebra $\gr(A) \cong \bigoplus_{k=0}^N B_k$ with $A$ as Banach spaces.  The multiplication in $\gr(A)$ is such that $B_n\cdot B_m \subset B_{n+m}$.  Given $a \in B_n$ and $b \in B_m$, the product in the filtered algebra $A$ can be written as
\[ ab = \sum_{k=0}^{n+m} \pi_{n,m}^k(a,b) \] for uniquely defined operators
\[ \pi_{n,m}^k: B_n \potimes B_m \to B_{n+m-k}. \]  Given $t \in \R$, we can define a new associative product $m_t$ on $A$ by
\[ m_t(a,b) = \sum_{k=0}^{n+m} t^k \pi_{n,m}^k(a,b), \qquad a \in B_n, b \in B_m. \]  This clearly gives a smooth deformation $\{A_t\}_{t \in \R}$ of Banach algebras, as the products depend polynomially on $t$.  We have $A_1 = A$, $A_0 = \gr(A)$, and $A_t \cong A$ for all $t \neq 0$.  If $\dbw \gr(A) < \infty$, then the Gauss-Manin connection is integrable for this deformation, and $HP^\bullet(A) \cong HP^\bullet(\gr(A))$.  View the inclusions $\{F_0 \to A_t\}_{t \in J}$ as a morphism of deformations out of the constant deformation.  From Proposition \ref{Proposition-NaturalityOfNablaGM}, this morphism induces a $\nabla^{GM}$-parallel map.  Since $HP^\bullet(\gr(A)) \to HP^\bullet(F_0)$ is an isomorphism (Example \ref{Example-GradedAlgebraCyclicHomology}), it follows that $HP^\bullet(A) \to HP^\bullet(F_0)$ is an isomorphism.
\end{example}

Notice that Example \ref{Example-CyclicHomologyNotPreserved} is such a deformation of a filtered algebra $A_1$ into its associated graded algebra $A_0$.  However $\dbw A_0 = \infty$, as one can show that $HH^n(A_0) \cong \C$ for all $n > 0$.


\begin{bibdiv}

\begin{biblist}

\bibselect{SmoothDeformationsGMConnection}
\end{biblist}

\end{bibdiv}

\end{document}